\newtheorem{theorem}{Theorem}[section]
\newtheorem{proposition}[theorem]{Proposition}
\newtheorem{definition}[theorem]{Definition}
\newtheorem{corollary}[theorem]{Corollary}
\newtheorem{lemma}[theorem]{Lemma}
\newtheorem{conjecture}[theorem]{Conjecture}
\newtheorem{example}[theorem]{Example}
\theoremstyle{remark}
\newtheorem{remark}[theorem]{Remark}
\begin{document}
\title{\bf{Complete stationary surfaces in $\mathbb{R}^4_1$ with total curvature $-\int K\mathrm{d}M=4\pi$}}

\author{Xiang Ma, Peng Wang}

\maketitle

\begin{center}
{\bf Abstract}
\end{center}

Applying the general theory about complete spacelike stationary
(i.e. zero mean curvature) surfaces in 4-dimensional Lorentz space $\mathbb{R}^4_1$,
we classify those regular algebraic ones with total Gaussian curvature
$-\int K\mathrm{d}M=4\pi$. Such surfaces must be oriented and be congruent to
either the generalized catenoids or the generalized enneper surfaces.
For non-orientable stationary surfaces, we consider the Weierstrass representation on the oriented double covering $\widetilde{M}$ (of genus $g$) and generalize Meeks and Oliveira's M\"obius bands.
The total Gaussian curvature are shown to be at least $2\pi(g+3)$ when $\widetilde{M}\to\mathbb{R}^4_1$ is algebraic-type.
We conjecture that there do not exist non-algebraic examples with $-\int K\mathrm{d}M=4\pi$.

\hspace{2mm}

{\bf Keywords:}  stationary surface, Weierstrass
representation, finite total Gaussian curvature,
singular end, non-orientable surfaces\\

{\bf MSC(2000):\hspace{2mm} 53A10, 53C42, 53C45}

\section{Introduction}
\label{intro}
In a previous paper \cite{Ma} we have generalized
the classical theory about minimal surfaces in $\mathbb{R}^3$ to
zero mean curvature spacelike surfaces in 4-dimensional Lorentz
space. Such an immersed surface $M^2\to\mathbb{R}^4_1$,
called a \emph{stationary surface} (see \cite{Alias} for related works before), admits a
Weierstrass-type representation formula, which involves a pair of meromorphic functions $\phi,\psi$ (the Gauss maps)
and a holomorphic $1$-form (the height differential) on $M$:
\[
{\bf x}= 2~\mathrm{Re}\int \Big(\phi+\psi, -\mathrm{i}(\phi-\psi),1-\phi\psi,1+\phi\psi\Big)\mathrm{d}h.
\]
Among complete examples, those with finite total curvature are most important, i.e., when the integral of the Gaussian curvature $-\int K\mathrm{d}M$ converges absolutely. For such surfaces,
under mild assumptions we have established Gauss-Bonnet type formulas
relating the total curvature with the Euler characteristic number of $M$,
the generalized multiplicities $\widetilde{d}_j$ of each ends,
the mapping degree of $\phi,\psi$, and the indices of the
so-called \emph{good singular ends}:
\begin{equation*}
\int_M K\mathrm{d}M=2\pi\Big(2-2g-r-\sum_{j=1}^r \widetilde{d}_j\Big)
=-2\pi \left(\deg\phi+ \deg\psi-\sum{_j} |\mathrm{ind}_{p_j}|\right)
\end{equation*}
On this foundation, here we go on to consider complete examples with
total curvature $-\int K\mathrm{d}M=4\pi$, which is the smallest
possible value among algebraic stationary surfaces.
(Here we ingore the trivial case when $M$ is contained in a
3-dimensional degenerate subspace $\mathbb{R}^3_0$.
The induced metric is flat in that case with total curvature $0$. See Section~2.)

Recall that in $\mathbb{R}^3$, Osserman has shown that
complete minimal surfaces with finite total curvature
must be algebraic ones, i.e., they are given by meromorphic
Weierstrass data over compact Riemann surfaces.
In particular, immersed examples with $-\int K\mathrm{d}M=4\pi$
are either the catenoid or the Enneper surface.
(For other complete minimal surfaces in $\mathbb{R}^3$ with small total curvature $-\int K\mathrm{d}M\le 12\pi$ and the classification results, see \cite{Costa,Lopez}.)

These two classical examples have been generalized by us in \cite{Ma}
to stationary surfaces in $\mathbb{R}^4_1$
(see Example~\ref{exa-enneper} and \ref{exa-catenoid}). In this paper our main result is

\bigskip\noindent
{\bf Theorem A}~~ Let $x:M^2\to\mathbb{R}^4_1$ be
a complete, immersed, algebraic stationary surface
with total curvature $4\pi$. Then it is either
a generalized catenoid, or a generalized Enneper surface.
In particular, there does not exist non-orientable
examples with $-\int K\mathrm{d}M\le 4\pi$.

\bigskip
Compared to minimal surfaces in $\mathbb{R}^3$, here
finite total Gaussian curvature
(i.e., $\int_M K\mathrm{d}M$ converges absolutely)
still implies that $M$ is conformally equivalent to a compact
Riemann surface $\overline{M}$ with finite punctures $\{p_j|1\le j\le r\}$.
A main difference is that in our case, finite total curvature
no longer implies \emph{algebraic-type}.
For counter-examples see Example~\ref{exa-essen} and Example~\ref{exa-essen2}.
An interesting open problem is that whether there exist
non-algebraic examples with $-\int K\mathrm{d}M=4\pi$.
See discussions in Section~5.

Another new technical difficulty is that, to solve existence and
uniqueness problems for complete stationary surfaces, now we
must consider the following equation about complex variable $z$:
\begin{equation}\label{eq-singular}
\phi(z)=\bar\psi(z),
\end{equation}
We have to show that there are no solutions to it for
meromorphic functions $\phi,\psi$ with given
algebraic forms and certain parameters on a compact
Riemann surface $\overline{M}$ (except at several points
assigned to be \emph{good singular ends}).
This is because that on
an immersed surface there must be
$\phi\ne\bar\psi$ (\emph{regularity condition}). On the other hand, at one end where $\phi,\bar\psi$ take the same value
with equal multiplicities
(\emph{bad singular end}), the total curvature will diverge.
Such a complex equation \eqref{eq-singular}
involving both holomorphic and anti-holomorphic functions is quite unusual
to the knowledge of the authors. Most of the time we have to
deal with this problem by handwork combined with experience.
See \cite{Ma} or Appendix A for related discussions.
Note that $M\to\mathbb{R}^3$ is a rare case where we overcome
this difficulty easily, because this time $\phi\equiv -1/\psi$, and this will never be equal to $\bar\psi$.
\bigskip

In \cite{Meeks}, Meeks initiated the study of complete non-orientable minimal surfaces
in $\mathbb{R}^3$. Such surfaces are represented on its oriented
double covering space, and the example with least possible
total curvature $6\pi$ was constructed (Meeks' M\"obius strip).
Here we generalize this theory to non-orientable stationary
surfaces in $\mathbb{R}^4_1$ (Section~4).
A key result is the following lower bound estimation
of the total curvature which helps to establish Theorem~A above.

\bigskip\noindent
{\bf Theorem B}~~
Given a non-orientable surface $M$ whose double covering space \
$\widetilde{M}$ has genus $g$ and finite many ends,
for any complete algebraic stationary immersion
$x:M\to\mathbb{R}^4_1$ with finite total curvature there must be
$-\int_M K\mathrm{d}M\ge 2\pi(g+3).$
\bigskip

We conjecture that $2\pi(g+3)$ is the best lower bound which could always been attained.
Note that this agrees with the estimation for non-orientable minimal surfaces in $\mathbb{R}^3$,
and the conjecture is still open even in that special case \cite{Martin}.

We organize this paper as below. In Section~2 we review the
basic theory about stationary surfaces in $\mathbb{R}^4_1$.
The orientable case and non-orientable case
are discussed separately in Section~3 and 4.
In Section~5 we give non-algebraic examples with small total curvature.
The proofs to several technical lemmas are left to Appendix~A and B.

\bigskip\noindent
\textbf{Acknowledgement}~~
We thank two colleagues of the first author at Peking University,
Professor Fan Ding for providing the proof to the topological
Theorem~\ref{thm-odd} in Appendix~B, and Professor Bican Xia for
verifying Lemma~\ref{lem-main} in Appendix~A using a computational method developed by him before.
We also thank the encouragement of Professor Changping Wang.
This work is supported by the Project 10901006 of
National Natural Science Foundation of China.

\section{Preliminary}
Let ${\bf x}:M^2\to \mathbb{R}^4_1$ be an oriented complete
spacelike surface in 4-dimensional Lorentz space.
The Lorentz inner product $\langle\cdot,\cdot\rangle$ is given by
\[\langle {\bf x},{\bf x}\rangle=x_1^2+x_2^2+x_3^2-x_4^2.\]
We will briefly review the basic facts and global results
established in \cite{Ma} about such surfaces
with zero mean curvature (called \emph{stationary surfaces}).

Let $\mathrm{d}s^2=\mathrm{e}^{2\omega}|\mathrm{d}z|^2$
be the induced Riemannian metric on $M$ with respect to a
local complex coordinate $z=u+\mathrm{i}v$. Hence
\[
\langle {\bf x}_{z},{\bf x}_{z}\rangle=0,~~ \langle
{\bf x}_{z},{\bf x}_{\bar{z}}\rangle =\frac{1}{2}\mathrm{e}^{2\omega}.
\]
Choose null vectors ${\bf y},{\bf y}^*$ in the normal plane
at each point such that
\[
\langle {\bf y},{\bf y}\rangle=\langle {\bf y}^*,{\bf y}^*\rangle=0, ~~ \langle
{\bf y},{\bf y}^*\rangle =1,~~ \mathrm{det}\{{\bf x}_u,{\bf x}_v,{\bf y},{\bf y}^*\}>0~.
\]
Such frames $\{{\bf y},\ {\bf y}^{*}\}$ are determined up to scaling
\begin{equation}\label{scaling}
 \{{\bf y},\ {\bf y}^{*}\}\rightarrow
\{\lambda {\bf y},\ \lambda^{-1}{\bf y}^{*}\}
\end{equation}
for some non-zero real-valued function $\lambda$. After projection, we obtain two well-defined maps (independent to the scaling \eqref{scaling})
\[
[{\bf y}],\ [{\bf y}^* ]: M \rightarrow S^2\cong\{[{\bf v}]\in\mathbb{R}P^3|\langle {\bf v},{\bf v}\rangle=0\}.
\]
The target space is usually called the projective light-cone, which is well-known
 to be homeomorphic to the 2-sphere. By analogy to $\mathbb{R}^3$,
 we call them \emph{Gauss maps} of the spacelike surface ${\bf x}$ in $\mathbb{R}^{4}_{1}$.

The surface has zero mean curvature $\vec{H}=0$ if, and only if,
$[{\bf y}],\ [{\bf y}^* ]: M \rightarrow S^2$ are conformal mappings
(yet they induce opposite orientations on $S^2$).
Since $S^2\cong \mathbb{C}\cup\{\infty\}$,
we may represent them locally by a pair of
holomorphic and anti-holomorphic functions
$\{\phi,\bar{\psi}\}$. The Weierstrass-type
representation of stationary surface ${\bf x} : M\rightarrow \mathbb{R}^4_1$ is given by \cite{Ma}:
\begin{equation}\label{x}
{\bf x}= 2~\mathrm{Re}\int \Big(\phi+\psi, -\mathrm{i}(\phi-\psi),1-\phi\psi,1+\phi\psi\Big)\mathrm{d}h
\end{equation}
in terms of two meromorphic functions $\phi,\psi$ and
a holomorphic $1$-form $\mathrm{d}h$ locally.
 We call $\phi,\psi$ the Gauss maps of ${\bf x}$ and
$\mathrm{d}h$ the height differential.

\begin{remark}\label{rem-Wrepre}
When $\phi\equiv \mp 1/\psi$, by \eqref{x} we obtain
a minimal surface in $\mathbb{R}^3$, or a maximal surface in
$\mathbb{R}^3_1$. This recovers the Weierstrass representation
in these classical cases. When $\phi$ or $\psi$ is constant,
we get a zero mean curvature spacelike surface in the 3-space
$\mathbb{R}^3_0\triangleq \{(x_1,x_2,x_3,x_3)\in\mathbb{R}^4_1\}$
with an induced degenerate inner product, which is essentially
the graph of a harmonic function $x_3=f(x_1,x_2)$ on
complex plane $\mathbb{C}=\{x_1+\mathrm{i}x_2\}$.
\end{remark}

\noindent \textbf{Convention:}
In this paper, we always assume that neither of $\phi,\psi$
is a constant unless it is stated otherwise.
According to the remark above, we have ruled out
the trivial case of stationary surfaces in $\mathbb{R}^3_0$.
(According to \eqref{eq-totalcurvature} below,
such surfaces have flat metrics and zero total Gaussian curvature.)

\begin{remark}\label{rem-trans}
The induced action of a Lorentz orthogonal transformation of $\mathbb{R}^4_1$
on the projective light-cone is nothing but a M\"obius transformation on $S^2$, or equivalently,
a fractional linear transformation on $\mathbb{C}P^1=\mathbb{C}\cup\{\infty\}$
given by $A=\left(\begin{smallmatrix}a & b \\ c & d\end{smallmatrix}\right)$
 with $a,b,c,d \in \mathbb{C},~ad-bc=1$.
The Gauss maps $\phi,\psi$ and the height differential $\mathrm{d}h$ transform as below:
\begin{equation}\label{trans}
\phi\Rightarrow
\frac{a\phi+b}{c\phi+d}~,~~
 \psi\Rightarrow \frac{\bar{a}\psi+\bar{b}}{\bar{c}\psi+\bar{d}}~,~~
 \mathrm{d}h\Rightarrow (c\phi+d)(\bar{c}\psi+\bar{d})\mathrm{d}h~.
 \end{equation}
This is repeatedly used in Section~2 and Section~3 to simplify
or to normalize the representation of examples.
\end{remark}

\begin{theorem}\label{thm-period}\cite{Ma}
Given holomorphic $1$-form $\mathrm{d}h$ and meromorphic functions $\phi,\psi:M\rightarrow \mathbb{C}\cup\{\infty\}$
globally defined on a Riemann surface $M$. Suppose they satisfy the regularity condition
1),2) and period conditions 3) as below:

1) $\phi\neq\bar{\psi}$ on $M$ and their poles do not coincide;

2) The zeros of $\mathrm{d}h$ coincide with the poles of $\phi$ or $\psi$
with the same order;

3) Along any closed path the periods satisfy
\begin{equation}\label{eq-period1}
\oint_\gamma \phi \mathrm{d}h
=-\overline{\oint_\gamma \psi \mathrm{d}h }, ~~~(\text{horizontal period condition})
\end{equation}
\begin{equation}\label{eq-period2}
\mathrm{Re}\oint_\gamma \mathrm{d}h=\mathrm{Re}\oint_\gamma \phi\psi \mathrm{d}h=0.~~~(\text{vertical period condition})
\end{equation}
Then
\eqref{x} defines a stationary surface ${\bf x}:M\rightarrow \mathbb{R}^4_1$.

Conversely, any stationary surface ${\bf x}:M\rightarrow \mathbb{R}^4_1$ can be
represented as \eqref{x} in terms of such $\phi,\ \psi$ and $\mathrm{d}h$ over a (necessarily non-compact) Riemann surface $M$.
\end{theorem}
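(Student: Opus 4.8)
The plan is to read the statement as a Weierstrass-type representation with two directions, and to reduce both to two algebraic identities for the coefficient vector $\mathbf{g}=(\phi+\psi,-\mathrm{i}(\phi-\psi),1-\phi\psi,1+\phi\psi)$, so that the integrand in \eqref{x} is $\mathbf{g}\,\mathrm{d}h$. First I would record $\langle\mathbf{g},\mathbf{g}\rangle=0$ and $\langle\mathbf{g},\bar{\mathbf{g}}\rangle=2|\phi-\bar\psi|^2$; both follow from $(\phi+\psi)^2-(\phi-\psi)^2=4\phi\psi=-[(1-\phi\psi)^2-(1+\phi\psi)^2]$ together with the parallelogram law $|\phi+\psi|^2+|\phi-\psi|^2=2|\phi|^2+2|\psi|^2$. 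For the forward direction, single-valuedness of $\mathbf{x}=2\,\mathrm{Re}\int\mathbf{g}\,\mathrm{d}h$ is equivalent to the vanishing of the real period of each of the four components along every closed $\gamma$. Writing $P=\oint_\gamma\phi\,\mathrm{d}h$ and $Q=\oint_\gamma\psi\,\mathrm{d}h$, the third and fourth components yield $\mathrm{Re}\oint_\gamma\mathrm{d}h=\mathrm{Re}\oint_\gamma\phi\psi\,\mathrm{d}h=0$, which is \eqref{eq-period2}; the first two yield $\mathrm{Re}(P+Q)=0$ and $\mathrm{Im}(P-Q)=0$, which is exactly $P=-\bar Q$, i.e. \eqref{eq-period1}. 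Thus conditions 3) are precisely the statement that $\mathbf{x}$ is well defined.

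Next, writing $\mathrm{d}h=f\,\mathrm{d}z$ locally, I would note that $\mathbf{x}=2\,\mathrm{Re}\int\mathbf{g}\,\mathrm{d}h$ has $\mathbf{x}_z=\mathbf{g}f$ holomorphic, whence $\langle\mathbf{x}_z,\mathbf{x}_z\rangle=f^2\langle\mathbf{g},\mathbf{g}\rangle=0$ (conformality) and $\langle\mathbf{x}_z,\mathbf{x}_{\bar z}\rangle=|f|^2\langle\mathbf{g},\bar{\mathbf{g}}\rangle=2|f|^2|\phi-\bar\psi|^2$. Conditions 1) and 2) guarantee $\mathbf{g}f$ is holomorphic and nowhere vanishing: at a pole of $\phi$ of order $m$, where $\psi$ is finite by 1), every entry of $\mathbf{g}$ has a pole of order $\le m$ and the matching zero of $\mathrm{d}h$ from 2) cancels it, while away from such points $f\neq0$. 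Hence $\mathrm{e}^{2\omega}=4|f|^2|\phi-\bar\psi|^2>0$ everywhere, so the induced metric is Riemannian (spacelike) and $\mathbf{x}$ is an immersion. Finally $\mathbf{x}_{z\bar z}=\partial_{\bar z}(\mathbf{g}f)=0$ shows $\mathbf{x}$ is harmonic, and a conformal harmonic immersion has $\vec H=0$.

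For the converse, given a stationary $\mathbf{x}$ the induced metric makes $M$ a Riemann surface, and $\vec H=0$ forces each coordinate to be harmonic, so $\mathbf{x}_z\,\mathrm{d}z=(\omega_1,\omega_2,\omega_3,\omega_4)$ is a globally defined holomorphic $\mathbb{C}^4$-valued $1$-form, isotropic by conformality: $\omega_1^2+\omega_2^2=\omega_4^2-\omega_3^2$. I would then invert \eqref{x} by setting $\mathrm{d}h=\tfrac12(\omega_3+\omega_4)$, $\phi=(\omega_1+\mathrm{i}\omega_2)/(\omega_3+\omega_4)$, $\psi=(\omega_1-\mathrm{i}\omega_2)/(\omega_3+\omega_4)$; the isotropy relation makes the two expressions for $\phi\psi$ agree and reproduces $\mathbf{g}\,\mathrm{d}h$. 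Single-valuedness of $\mathbf{x}$ returns the period conditions via the equivalence established above, and regularity of the immersion (that $\mathbf{x}_z$ is finite and nonzero and the metric positive) returns conditions 1), 2) and $\phi\neq\bar\psi$. Non-compactness of $M$ is immediate: a harmonic function on a closed surface is constant, so $\mathbf{x}$ could not be an immersion.

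Most of this is routine; the genuinely delicate point will be the converse normalization, namely ensuring $\omega_3+\omega_4\not\equiv0$ so that $\phi,\psi,\mathrm{d}h$ are well defined. When it fails I would first apply a Lorentz motion via the Möbius action \eqref{trans} to rotate the target so that $\omega_3+\omega_4\not\equiv0$, then check that the representation is unaffected. The second point requiring care is to confirm that the $\phi,\psi$ produced by the inversion coincide with the Gauss maps $[\mathbf{y}],[\mathbf{y}^*]$ of Section~2, which is where the geometric meaning—rather than the merely formal identity of $1$-forms—has to be pinned down.
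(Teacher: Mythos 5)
Your proposal is correct, and it follows what is essentially the only (and the standard) route: the paper itself states Theorem~\ref{thm-period} as a quoted result from \cite{Ma} without reproducing the proof, and your argument --- the isotropy identities $\langle\mathbf{g},\mathbf{g}\rangle=0$, $\langle\mathbf{g},\bar{\mathbf{g}}\rangle=2|\phi-\bar\psi|^2$ for the coefficient vector, the componentwise real-period analysis recovering \eqref{eq-period1}--\eqref{eq-period2}, and the inversion $\mathrm{d}h=\tfrac12(\omega_3+\omega_4)$, $\phi=(\omega_1+\mathrm{i}\omega_2)/(\omega_3+\omega_4)$, $\psi=(\omega_1-\mathrm{i}\omega_2)/(\omega_3+\omega_4)$ after a Lorentz normalization making $\omega_3+\omega_4\not\equiv 0$ --- is the derivation given in that reference. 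The two delicate points you flag are genuine but resolve as you suggest: since the null vectors span $\mathbb{R}^4_1$, some null direction pairs nontrivially with ${\bf x}_z\,\mathrm{d}z$ and the Lorentz group moves it to $(0,0,1,-1)$, while the identification of $\phi,\psi$ with the Gauss maps $[{\bf y}],[{\bf y}^*]$ is not needed for the statement as given, only for the surrounding geometric interpretation.
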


The structure equations and the integrability conditions are given
in \cite{Ma}. An extremely important corollary is the formula below
for the total Gaussian and normal curvature over a compact stationary surface $M$ with boundary $\partial M$:
\begin{equation}
\begin{split}
\int_M(-K+\mathrm{i}K^{\perp})\mathrm{d}M&=
2\mathrm{i}\int_M
\frac{\phi_z\bar{\psi}_{\bar{z}}}{(\phi-\bar{\psi})^2}
\mathrm{d}z\wedge \mathrm{d}\bar{z}
\\
&=-2\mathrm{i}\int_{\partial M} \frac{\phi_z}{\phi-\bar{\psi}} \mathrm{d}z
=-2\mathrm{i}\int_{\partial M}
\frac{\bar{\psi}_{\bar{z}}}{\phi-\bar{\psi}}\mathrm{d}\bar{z}.
\label{eq-totalcurvature}
\end{split}
\end{equation}

At one end $p$ with $\phi=\bar\psi$, the integral of
total curvature above will become an improper integral.
An important observation in \cite{Ma} is that this improper
 integral converges absolutely only for a special class of
such ends.

\begin{definition}
Suppose ${\bf x}:D-\{0\}\rightarrow \mathbb{R}^4_1$
is an annular end of a regular stationary surface (with boundary)
whose Gauss maps $\phi$ and $\psi$
extend to meromorphic functions on the unit disk $D\subset\mathbb{C}$.
It is called a \emph{\underline{regular end}} when
\[
\phi(0)\ne\bar\psi(0).~~~(\text{Thus}~ \phi(z)\ne\bar\psi(z),~\forall~z\in D.)
\]
It is a \emph{\underline{singular end}} if $\phi(0)=\bar\psi(0)$
where the value could be finite or $\infty$.

When the multiplicities of $\phi$ and $\bar\psi$ at $z=0$
are equal, we call $z=0$ a \emph{\underline{bad singular end}}.
Otherwise it is a \emph{\underline{good singular end}}.
\end{definition}

\begin{proposition}\cite{Ma}\label{prop-goodsingular}
A singular end of a stationary surface ${\bf x}:D-\{0\}\rightarrow \mathbb{R}^4_1$
is \emph{good} if and only if the curvature integral
\eqref{eq-totalcurvature} converges absolutely around this end.
\end{proposition}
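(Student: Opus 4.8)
The plan is to reduce the statement to a purely local computation of the integrand $\frac{\phi_z\bar\psi_{\bar z}}{(\phi-\bar\psi)^2}$ appearing in \eqref{eq-totalcurvature}, carried out in a punctured coordinate disc around the end $z=0$, and then to read off absolute convergence from the leading-order behaviour of $\phi-\bar\psi$ alone. Here absolute convergence means $\int\big|\frac{\phi_z\bar\psi_{\bar z}}{(\phi-\bar\psi)^2}\big|\,|\mathrm{d}z\wedge\mathrm{d}\bar z|<\infty$ near $0$, so all cancellation in the $2$-form is discarded and only the pointwise size of the integrand matters.

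First I would normalise the common value. Since $\phi(0)=\bar\psi(0)$ could equal $\infty$, I apply a Lorentz transformation of $\mathbb{R}^4_1$; by Remark~\ref{rem-trans} it acts on $\phi,\psi$ as a M\"obius transformation, hence preserves the orders of vanishing, and being an isometry it leaves the curvature integral \eqref{eq-totalcurvature} unchanged. Thus I may assume the common value is a finite $c\in\mathbb{C}$ and write the local expansions $\phi(z)=c+az^m+\cdots$ and $\psi(z)=\bar c+bz^n+\cdots$ with $a,b\neq0$, so that $\bar\psi(z)=c+\bar b\,\bar z^{\,n}+\cdots$. Here $m$ is the multiplicity of $\phi$ and $n$ that of $\bar\psi$ at the end, and by definition the end is \emph{good} exactly when $m\neq n$. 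Putting $r=|z|$, one has $|\phi_z|\asymp r^{m-1}$ and $|\bar\psi_{\bar z}|=|\overline{\psi'}|\asymp r^{n-1}$, so the numerator satisfies $|\phi_z\bar\psi_{\bar z}|\asymp r^{m+n-2}$, and the whole problem becomes estimating $|\phi-\bar\psi|=|az^m-\bar b\,\bar z^{\,n}+\cdots|$, a quantity that genuinely mixes a holomorphic and an anti-holomorphic leading term.

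The dichotomy now falls out of which term dominates. In the good case, say $m<n$ (the case $m>n$ is symmetric), the holomorphic term wins and $|\phi-\bar\psi|\asymp|a|r^m$ uniformly in $\arg z$; hence the integrand has modulus $\asymp r^{\,n-m-2}$, and against the $2$-form $\mathrm{d}z\wedge\mathrm{d}\bar z$, whose modulus scales as $r\,\mathrm{d}r\,\mathrm{d}\theta$, the integral is comparable to $\int_{0}r^{\,n-m-1}\,\mathrm{d}r$, which converges because $n-m\ge1$. In the bad case $m=n$ the two leading terms have the same order, only the one-sided estimate $|\phi-\bar\psi|\le Cr^m$ survives, and together with the uniform lower bound $|\phi_z\bar\psi_{\bar z}|\ge c_0r^{2m-2}$ this forces the integrand to be $\gtrsim r^{-2}$; the integral is then bounded below by a multiple of $\int_0\mathrm{d}r/r=\infty$. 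This is precisely the claimed equivalence.

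The step I expect to require the most care is the divergence in the bad case, because the interference between $az^m$ and $\bar b\,\bar z^{\,m}$ can make $|\phi-\bar\psi|$ anomalously small along certain rays $\arg z=\theta_0$, and the immersion hypothesis only guarantees $\phi\neq\bar\psi$ on the punctured disc rather than a pointwise lower bound as $r\to0$. The way I would sidestep this is to never lower-bound $|\phi-\bar\psi|$: the uniform \emph{upper} bound $|\phi-\bar\psi|\le Cr^m$ already gives $1/|\phi-\bar\psi|^2\ge C^{-2}r^{-2m}$, and multiplying by the uniform numerator bound produces the $r^{-1}$ radial divergence no matter what the angular factor does. Rays along which the leading angular factor vanishes only make the divergence worse, so absolute convergence fails at every bad end and holds at every good one.
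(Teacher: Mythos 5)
Your proposal is correct, and it follows essentially the same route as the source: the paper itself does not reproduce a proof of Proposition~\ref{prop-goodsingular} (it is quoted from \cite{Ma}), and the argument there is precisely this local one --- normalize the common value via \eqref{trans} so that $\phi=c+az^m+\cdots$, $\bar\psi=c+\bar b\,\bar z^{\,n}+\cdots$ with $a,b\ne 0$, then compare the integrand modulus $\asymp r^{m+n-2}/|\phi-\bar\psi|^2$ against the area element $r\,\mathrm{d}r\,\mathrm{d}\theta$, getting $r^{\,n-m-1}\,\mathrm{d}r$ (convergent) when $m\ne n$ and a lower bound $\gtrsim \mathrm{d}r/r$ (divergent) when $m=n$. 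Your handling of the delicate point is also right: in the bad case only the upper bound $|\phi-\bar\psi|\le Cr^m$ is needed, so the possible angular degeneracy of $|a\mathrm{e}^{\mathrm{i}m\theta}-\bar b\,\mathrm{e}^{-\mathrm{i}m\theta}|$ when $|a|=|b|$ only strengthens the divergence, and the regularity condition $\phi\ne\bar\psi$ on the punctured disc guarantees the integral is improper only at $z=0$.
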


For a good singular end we introduced the following
definition of its index.

\begin{definition}\cite{Ma}\label{lemma-index2}
Suppose $p$ is an isolated zero of $\phi-\bar\psi$ in $p$'s neighborhood $D_p$,
where holomorphic functions $\phi$ and $\psi$ take the value
$\phi(p)=\overline{\psi(p)}$ with multiplicity $m$ and $n$, respectively.
\emph{The index of $\phi-\bar{\psi}$ at $p$} (when $\phi,\psi$ are both holomorphic at $p$) is
\begin{equation}\label{eqind}
\mathrm{ind}_p(\phi-\bar{\psi})\triangleq
\frac{1}{2\pi\mathrm{i}}\oint_{\partial D_{p}}d\ln(\phi-\bar{\psi})=\left\{
   \begin{array}{ll}
          m, & \hbox{$m<n$;} \\
         -n, & \hbox{ $m>n$.}
   \end{array}
 \right.
\end{equation}
\emph{The absolute index of $\phi-\bar{\psi}$ at $p$} is
\begin{equation}\label{eqind+}
\mathrm{ind}^{+}_p(\phi-\bar{\psi})\triangleq
\left|\mathrm{ind}_p(\phi-\bar{\psi})\right|.
\end{equation}
For a regular end our index is still meaningful with
$\mathrm{ind}=\mathrm{ind}^+=0.$ For convenience we also introduce
\begin{equation}\label{eqind10}
\mathrm{ind}^{1,0}\!\triangleq \frac{1}{2}(\mathrm{ind}^{+}\!+\mathrm{ind}),~~~
\mathrm{ind}^{0,1}\!\triangleq \frac{1}{2}(\mathrm{ind}^{+}\!-\mathrm{ind}),
\end{equation}
which are always non-negative.
\end{definition}

Note that our definition of index of $\phi-\bar\psi$ is invariant
under the action of fractional linear transformation \eqref{trans}.
So it is well-defined for a good singular end of
a stationary surface.
In particular, we can always assume that our singular ends
do not coincide with poles of $\phi,\psi$;
hence the definition above is valid.

\medskip
A stationary surface in $\mathbb{R}^4_1$ is called an
\emph{algebraic stationary surface} if there exists a
compact Riemann surface $\overline{M}$
with $M=\overline{M}\backslash\{p_1,p_2,\cdots,p_r\}$ such that
${\bf x}_z \mathrm{d}z$ is a vector valued meromorphic form defined
on $\overline{M}$. In other words, the Gauss map $\phi,\psi$
and height differential $\mathrm{d}h$ extend to meromorphic functions/forms on $\overline{M}$.
For this surface class we have established
Gauss-Bonnet type formulas involving the indices of
the good singular ends.

\begin{theorem}[\cite{Ma}]\label{GB2}
For a complete algebraic stationary surface
${\bf x}:M\rightarrow \mathbb{R}^4_1$
given by \eqref{x} in terms of $\phi,\psi,\mathrm{d}h$
without bad singular ends, the total Gaussian curvature and total normal curvature are related with the indices at the ends $p_j$ (singular or regular) by the following formulas:
\begin{align}
\int_M K^{\perp}\mathrm{d}M &=0~,\label{eq-deg0}\\
\int_M K\mathrm{d}M &=-4\pi \left(\deg\phi-\sum{_j} \mathrm{ind}^{1,0}(\phi-\bar{\psi})\right) \label{eq-deg1}\\
&=-4\pi \left(\deg\psi-\sum{_j} \mathrm{ind}^{0,1}(\phi-\bar{\psi})\right),\label{eq-deg2}
\end{align}
From \eqref{eq-deg1}\eqref{eq-deg2} we have equivalent identities:
\begin{gather}
\sum{_j}\mathrm{ind}_{p_j}(\phi-\bar{\psi})=\deg\phi-\deg\psi~.\label{eq-deg3}\\
\int_M K\mathrm{d}M =-2\pi \left(\deg\phi+ \deg\psi-\sum{_j} \mathrm{ind}^{+}_{p_j}(\phi-\bar{\psi})\right)~.\label{eq-deg4}
\end{gather}
\end{theorem}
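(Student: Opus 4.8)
The plan is to integrate the boundary form of the curvature identity \eqref{eq-totalcurvature} over the compact surface obtained by excising the ends, and then read off the right-hand sides as a sum of local residues. Write $M=\overline{M}\setminus\{p_1,\dots,p_r\}$ and let $M_\varepsilon$ be $\overline{M}$ with a small coordinate disk $D_j(\varepsilon)$ removed around each end $p_j$ as well as around each interior pole of $\phi$; the latter must be removed because the $(1,0)$-form $\eta:=\frac{\phi_z}{\phi-\bar\psi}\mathrm{d}z=\frac{\mathrm{d}\phi}{\phi-\bar\psi}$ is singular exactly there. On $M_\varepsilon$ the form $\eta$ is smooth and \eqref{eq-totalcurvature} reads $\int_{M_\varepsilon}(-K+\mathrm{i}K^\perp)\mathrm{d}M=-2\mathrm{i}\oint_{\partial M_\varepsilon}\eta$. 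Letting $\varepsilon\to0$, the left side converges to $\int_M(-K+\mathrm{i}K^\perp)\mathrm{d}M$: absolute convergence at the ends is exactly Proposition~\ref{prop-goodsingular} (this is where the hypothesis of no bad singular ends is used), while $\bar\partial\eta=-\frac{\phi_z\bar\psi_{\bar z}}{(\phi-\bar\psi)^2}\mathrm{d}z\wedge\mathrm{d}\bar z$ is in fact bounded near the poles of $\phi$, so those excisions cost nothing.

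It remains to evaluate $\lim_{\varepsilon\to0}\oint_{\partial M_\varepsilon}\eta$ as a sum of local contributions, each circle carrying the boundary orientation. First I would treat the poles: near a pole of $\phi$ of order $k$ one has $\eta=\frac{\mathrm{d}\phi}{\phi-\bar\psi}\sim\frac{\mathrm{d}\phi}{\phi}=-k\frac{\mathrm{d}z}{z}+\cdots$, contributing $2\pi\mathrm{i}\,k$; summing over all poles of $\phi$ on $\overline{M}$ (singular ends carry none, by the normalization following Definition~\ref{lemma-index2}) gives $2\pi\mathrm{i}\deg\phi$. A regular end where $\phi$ is finite contributes $0$. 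At a good singular end I may assume $\phi(0)=\overline{\psi(0)}$ is finite after a transformation of the form \eqref{trans}, and write $\phi-\phi(0)=az^m+\cdots$, $\bar\psi-\overline{\psi(0)}=\bar b\bar z^{\,n}+\cdots$ with $m\ne n$; since one monomial strictly dominates the other as $z\to0$, when $m<n$ one finds $\eta=\frac{m}{z}(1+O(|z|^{\delta}))\mathrm{d}z$ and the contribution is $-2\pi\mathrm{i}\,m=-2\pi\mathrm{i}\,\mathrm{ind}^{1,0}$, while when $m>n$ the leading term integrates to zero so the contribution is $-2\pi\mathrm{i}\,\mathrm{ind}^{1,0}=0$; in both cases this matches \eqref{eqind}--\eqref{eqind10}. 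Collecting the three types and multiplying by $-2\mathrm{i}$ yields $\int_M(-K+\mathrm{i}K^\perp)\mathrm{d}M=4\pi\big(\deg\phi-\sum_j\mathrm{ind}^{1,0}\big)$; taking real and imaginary parts gives \eqref{eq-deg1} and, since the right-hand side is real, also $\int_M K^\perp\mathrm{d}M=0$, i.e.\ \eqref{eq-deg0}.

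Running the identical argument on the conjugate form $\eta'=\frac{\bar\psi_{\bar z}}{\phi-\bar\psi}\mathrm{d}\bar z=\frac{\mathrm{d}\bar\psi}{\phi-\bar\psi}$ --- whose singularities are the poles of $\psi$ and at whose end circles the roles of $m$ and $n$ are interchanged --- produces \eqref{eq-deg2} with $\deg\psi$ and $\mathrm{ind}^{0,1}$ in place of $\deg\phi$ and $\mathrm{ind}^{1,0}$. Finally the two remaining identities are pure bookkeeping: subtracting \eqref{eq-deg1} from \eqref{eq-deg2} and using $\mathrm{ind}^{1,0}-\mathrm{ind}^{0,1}=\mathrm{ind}$ gives \eqref{eq-deg3}, while adding them and using $\mathrm{ind}^{1,0}+\mathrm{ind}^{0,1}=\mathrm{ind}^+$ gives \eqref{eq-deg4}.

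I expect the only genuinely delicate step to be the local analysis at a good singular end, where the denominator $\phi-\bar\psi=az^m-\bar b\bar z^{\,n}+\cdots$ mixes holomorphic and anti-holomorphic parts. The key point, and where the good-versus-bad dichotomy $m\ne n$ is essential, is that for $m\ne n$ exactly one monomial dominates on the circle $|z|=\varepsilon$, so the residue is governed by a single term and the higher-order corrections integrate to $O(\varepsilon^\delta)$; when $m=n$ this domination fails and, consistently with Proposition~\ref{prop-goodsingular}, the integral does not converge.
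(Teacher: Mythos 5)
Your proposal is correct and follows essentially the intended route: the paper itself imports Theorem~\ref{GB2} from \cite{Ma} without reproducing a proof, but the mechanism is exactly your Stokes/residue computation built on \eqref{eq-totalcurvature} --- boundary circles at the poles of $\phi$ (resp.\ $\psi$) contributing $2\pi\mathrm{i}\deg\phi$ (resp.\ $2\pi\mathrm{i}\deg\psi$), the dominant-monomial analysis at good singular ends producing the $\mathrm{ind}^{1,0}$ (resp.\ $\mathrm{ind}^{0,1}$) corrections after the normalization via \eqref{trans} noted after Definition~\ref{lemma-index2}, and Proposition~\ref{prop-goodsingular} supplying absolute convergence precisely because bad singular ends are excluded. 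Your bookkeeping (orientations of the excised circles, the observation that $m\ne n$ makes one monomial dominate on $|z|=\varepsilon$ so the error terms are $O(\varepsilon^\delta)$, and the reality argument giving \eqref{eq-deg0}) is accurate, so there is nothing to add.
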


\begin{definition}\label{def-multiplicity}
The multiplicity of a regular or singular end $p_j$ for a stationary surface
in $\mathbb{R}^4_1$ is defined to be
\[
\widetilde{d}_j=d_j-\mathrm{ind}^+_{p_j},
\]
where $d_j+1$ is equal to the order of the pole of~
 ${\bf x}_z \mathrm{d}z$ at $p_j$.
\end{definition}

\begin{theorem}
[Generalized Jorge-Meeks formula \cite{Ma}]
\label{GB3}
Given an algebraic stationary surface ${\bf x}:M\rightarrow \mathbb{R}^4_1$
with only regular or good singular ends $\{p_1,\cdots,p_r\}=\overline{M}-M$.
Let $g$ be the genus of compact Riemann surface $\overline{M}$,
$r$ the number of ends, and $\widetilde{d}_j$ the multiplicity of $p_j$. We have
\begin{equation}\label{eq-jorgemeeks2}
\int_M
K\mathrm{d}M=2\pi\left(2-2g-r-\sum_{j=1}^r \widetilde{d}_j\right)~, ~~\int_M K^{\perp}\mathrm{d}M =0~.
\end{equation}
\end{theorem}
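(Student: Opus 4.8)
The normal-curvature half of \eqref{eq-jorgemeeks2} is nothing but \eqref{eq-deg0}, so no work is needed there. For the Gaussian part the plan is to feed the already-established formula \eqref{eq-deg4} into a purely divisor-theoretic degree count. Substituting the definition $\widetilde{d}_j=d_j-\mathrm{ind}^+_{p_j}$ (Definition~\ref{def-multiplicity}) into the target right-hand side $2\pi(2-2g-r-\sum_j\widetilde{d}_j)$, the terms $\sum_j\mathrm{ind}^+_{p_j}$ appearing there cancel against those in \eqref{eq-deg4}. A short rearrangement then shows that \eqref{eq-jorgemeeks2} is \emph{equivalent} to the single index-free identity
\[
\deg\phi+\deg\psi=2g-2+r+\sum_{j=1}^r d_j.
\]
Thus the whole statement reduces to proving this degree identity, which I will denote $(\ast)$.

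To prove $(\ast)$ I would compute the divisor of $\mathrm{d}h$ on the compact surface $\overline{M}$. Since the surface is algebraic, and since the sum of the third and fourth components of ${\bf x}_z\mathrm{d}z$ equals $2\,\mathrm{d}h$, the height differential $\mathrm{d}h$ extends to a meromorphic $1$-form on $\overline{M}$, so $\deg\,\mathrm{div}(\mathrm{d}h)=2g-2$. On the open surface $M$ the regularity conditions 1),2) of Theorem~\ref{thm-period} control everything: $\mathrm{d}h$ is holomorphic there, its zeros occur exactly at the poles of $\phi$ or of $\psi$ with equal order, and (by 1)) these poles never coincide. Hence the total interior zero-order of $\mathrm{d}h$ equals $I_\phi+I_\psi$, where $I_\phi,I_\psi$ denote the interior pole-orders of $\phi$ and $\psi$. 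Splitting $\deg\phi$ and $\deg\psi$ into interior contributions ($I_\phi,I_\psi$) plus contributions at the ends, and writing $\sum_{p\in M}\mathrm{ord}_p(\mathrm{d}h)=2g-2-\sum_j\mathrm{ord}_{p_j}(\mathrm{d}h)$, reduces $(\ast)$ to a local statement, one equation per end.

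At an end $p_j$, set $\mu_j=\mathrm{ord}_{p_j}\phi$, $\nu_j=\mathrm{ord}_{p_j}\psi$, $h_j=\mathrm{ord}_{p_j}(\mathrm{d}h)$. Using $d_j+1=-\mathrm{ord}_{p_j}({\bf x}_z\mathrm{d}z)$ (Definition~\ref{def-multiplicity}) together with $\mathrm{ord}_{p_j}({\bf x}_z\mathrm{d}z)=\mathrm{ord}_{p_j}(V)+h_j$, where $V=(\phi+\psi,-\mathrm{i}(\phi-\psi),1-\phi\psi,1+\phi\psi)$ is the null vector part, the per-end equation becomes, after the $h_j$ cancel, the elementary arithmetic identity
\[
\max(-\mu_j,0)+\max(-\nu_j,0)=-\min(\mu_j,\nu_j,0,\mu_j+\nu_j),
\]
which I would verify by checking the four sign cases of $(\mu_j,\nu_j)$. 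The key input here is the claim $\mathrm{ord}_{p_j}(V)=\min(\mu_j,\nu_j,0,\mu_j+\nu_j)$.

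The main obstacle is precisely this last claim: one must rule out a spurious cancellation of leading coefficients that would make the order of $V$ strictly larger than $\min(\mu_j,\nu_j,0,\mu_j+\nu_j)$. I expect to handle it by exploiting the special algebraic structure of $V$: from the first two components one recovers $2\phi$ and $2\psi$ by an invertible linear combination, so $(\phi+\psi,\,-\mathrm{i}(\phi-\psi))$ cannot both have order exceeding $\min(\mu_j,\nu_j)$; likewise at most one of $1-\phi\psi$ and $1+\phi\psi$ can vanish to higher order, so their joint order is exactly $\min(0,\mu_j+\nu_j)$. Combining the two gives the exact order of $V$, closing the argument and hence establishing $(\ast)$ and the theorem.
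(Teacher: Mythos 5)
Your proposal is correct, and it proves the theorem by a route the paper itself does not spell out: here the result is simply quoted from \cite{Ma}, and the classical Jorge--Meeks argument (which \cite{Ma} adapts) obtains each end's contribution $2\pi(\widetilde{d}_j+1)$ analytically, by applying Gauss--Bonnet on the compact exhaustion and computing the asymptotic total geodesic curvature of small circles around each puncture from the Laurent expansion of ${\bf x}_z\mathrm{d}z$. You instead take the already-quoted index formula \eqref{eq-deg4} as input and observe that, after substituting $\widetilde{d}_j=d_j-\mathrm{ind}^+_{p_j}$, the theorem is equivalent to the index-free identity $\deg\phi+\deg\psi=2g-2+r+\sum_j d_j$, which you then establish by pure divisor counting: $\deg\operatorname{div}(\mathrm{d}h)=2g-2$ on $\overline{M}$, the interior zeros of $\mathrm{d}h$ match the interior pole orders of $\phi$ and $\psi$ exactly (regularity conditions 1), 2) of Theorem~\ref{thm-period}, including the non-coincidence of poles), and at each end the identity reduces to $\mathrm{ord}_{p_j}(V)=\min(\mu_j,\nu_j,0,\mu_j+\nu_j)$ for $V=(\phi+\psi,-\mathrm{i}(\phi-\psi),1-\phi\psi,1+\phi\psi)$. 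Your treatment of the only delicate point is sound: since $2\phi$ and $2\psi$ are invertible linear combinations of the first two components, their joint order is exactly $\min(\mu_j,\nu_j)$; since the last two components sum to $2$ and $\phi\psi$ has exact order $\mu_j+\nu_j$, their joint order is exactly $\min(0,\mu_j+\nu_j)$ (this even covers the degenerate cases $\phi\psi\equiv\pm1$, i.e.\ surfaces in $\mathbb{R}^3$ or $\mathbb{R}^3_1$, where one component vanishes identically), and the arithmetic identity
\begin{equation*}
\max(-\mu,0)+\max(-\nu,0)=-\min(\mu,\nu,0,\mu+\nu)
\end{equation*}
checks in all four sign cases. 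What the two approaches buy is different: the boundary-integral proof is self-contained and gives the geometric meaning of $\widetilde{d}_j$ as an asymptotic winding multiplicity, while your derivation is shorter and clarifies the logical structure of the paper's quoted results --- it shows that Theorem~\ref{GB3} is not independent of Theorem~\ref{GB2} but follows from it together with Riemann--Roch-type bookkeeping, with all the analysis already absorbed into \eqref{eq-deg4} and the good-singular-end hypothesis that makes the indices well defined.
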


\begin{proposition}[\cite{Ma}]\label{ineq-multiplicity}
Let ${\bf x}:D^2-\{0\}\rightarrow \mathbb{R}^4_1$ be a regular or a good
singular end which is further assumed to be complete at $z=0$. Then
its multiplicity satisfies $\widetilde{d}\ge 1.$
\end{proposition}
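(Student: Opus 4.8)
The plan is to convert the completeness hypothesis and the multiplicity $\widetilde d=d-\mathrm{ind}^+$ into sharp vanishing/pole orders at the puncture $z=0$, and then to eliminate the borderline case $\widetilde d=0$ using the single-valuedness (period) conditions. First I would record the conformal factor of the induced metric: a direct computation from \eqref{x} gives $\langle{\bf x}_z,{\bf x}_{\bar z}\rangle=2|\phi-\bar\psi|^2|h'|^2$, where $\mathrm{d}h=h'\,\mathrm{d}z$, so that $\mathrm{d}s=2|\phi-\bar\psi|\,|\mathrm{d}h|$. Using the fractional linear transformation \eqref{trans}, which is induced by an ambient isometry and therefore preserves completeness, the type of the end, the index, and the pole order $d+1$ of ${\bf x}_z\mathrm{d}z$, I would normalize so that $\phi,\psi$ are holomorphic at $0$ with $\phi(0)=0$; then $\psi(0)=0$ at a singular end, while $\psi(0)\neq0$ at a regular end.

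Next I would pin down the orders. Since the third and fourth components $1\mp\phi\psi$ of the Weierstrass vector $\Phi=(\phi+\psi,-\mathrm{i}(\phi-\psi),1-\phi\psi,1+\phi\psi)$ cannot vanish simultaneously, $\Phi(0)\neq0$, so the pole order of ${\bf x}_z\mathrm{d}z$ equals that of $\mathrm{d}h$; hence $\mathrm{d}h$ has a pole of order exactly $d+1$ at $0$. Because $\phi-\bar\psi$ is a sum of a holomorphic and an anti-holomorphic term whose orders are \emph{unequal} (the end being regular or \emph{good}), no cancellation occurs and $|\phi-\bar\psi|\sim|z|^{\mathrm{ind}^+}$. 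Combining these, $\mathrm{d}s\sim C\,|z|^{\mathrm{ind}^+-(d+1)}|\mathrm{d}z|=C\,|z|^{-(\widetilde d+1)}|\mathrm{d}z|$ near $0$, with $C\neq0$. Completeness means $\int_0 r^{-(\widetilde d+1)}\,\mathrm{d}r=\infty$ along radial paths, which forces $\widetilde d+1\geq1$, i.e. $\widetilde d\geq0$.

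The main obstacle is to exclude $\widetilde d=0$, the borderline logarithmic end for which the growth estimate $\mathrm{d}s\sim C|z|^{-1}|\mathrm{d}z|$ is consistent with completeness and so is inconclusive by itself. Here $d=\mathrm{ind}^+$, so $\mathrm{d}h$ has pole order $\mathrm{ind}^+{+}1$ with nonzero leading coefficient. I would then compute the residues $\mathrm{Res}_0(\phi\,\mathrm{d}h)$ and $\mathrm{Res}_0(\psi\,\mathrm{d}h)$: exactly the Gauss map vanishing to the lower order $\mathrm{ind}^+=\min(\mathrm{ord}_0\phi,\mathrm{ord}_0\psi)$ contributes a nonzero residue (a nonzero multiple of the leading coefficient of $\mathrm{d}h$), while the other contributes none. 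Single-valuedness of ${\bf x}$ around the puncture is equivalent to the horizontal period condition \eqref{eq-period1}, namely $\oint_\gamma\phi\,\mathrm{d}h=-\overline{\oint_\gamma\psi\,\mathrm{d}h}$; since one side is nonzero while the other vanishes, this identity fails. (At a regular end the same argument applies with $\mathrm{ord}_0\psi=0$; equivalently, without the normalization it forces $\phi(0)=\overline{\psi(0)}$, contradicting regularity.) This contradiction rules out $\widetilde d=0$ and yields $\widetilde d\geq1$. The genuinely delicate point is precisely this last step: at the borderline the metric is silent, and the obstruction must be extracted from the period constraint coupling $\phi\,\mathrm{d}h$ to $\psi\,\mathrm{d}h$, which is exactly where the hypothesis that the end is regular or \emph{good} (so that the two orders differ) is essential.
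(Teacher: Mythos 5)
Your proof is correct and takes the natural route that the cited source follows (the paper itself quotes this proposition from \cite{Ma} without reproducing a proof): normalizing by \eqref{trans} so that $\phi,\psi$ are holomorphic at $0$ with $\phi(0)=0$, using that at a regular or \emph{good} singular end the vanishing orders of $\phi$ and $\bar\psi$ differ so that $|\phi-\bar\psi|\sim C|z|^{\mathrm{ind}^+}$ with no cancellation, deducing $\mathrm{d}s=2|\phi-\bar\psi|\,|\mathrm{d}h|\sim C|z|^{-(\widetilde d+1)}|\mathrm{d}z|$ and hence $\widetilde d\ge 0$ from completeness, and then excluding the borderline $\widetilde d=0$ via the horizontal period condition \eqref{eq-period1}, which single-valuedness of ${\bf x}$ around the puncture forces along a loop encircling $0$. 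Your key residue computation is airtight: with $\mathrm{ord}_0\phi=m=\mathrm{ind}^+$ and $\mathrm{d}h$ of pole order $m+1$, the only product of Laurent coefficients landing in degree $-1$ in $\phi\,\mathrm{d}h$ is leading-times-leading, so $\mathrm{Res}_0(\phi\,\mathrm{d}h)\ne 0$ while $\psi\,\mathrm{d}h$ is holomorphic at $0$, contradicting \eqref{eq-period1} (and at a regular end the same simple-pole argument, together with $\mathrm{Re}\oint\mathrm{d}h=0$, forces $\phi(0)=\overline{\psi(0)}$, contradicting regularity).
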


\begin{corollary}
[The Chern-Osserman type inequality \cite{Ma}]
\label{GB2C}
Let ${\bf x}:M\rightarrow \mathbb{R}^4_1$ be an algebraic stationary surface without
bad singular ends, $\overline{M}=M\cup\{q_1,\cdots,q_r\}$. Then
\begin{equation}
\int K\mathrm{d}M \le 2\pi(\chi(M)-r)=4\pi(1-g-r).
\end{equation}
\end{corollary}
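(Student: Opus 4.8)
The plan is to read this off directly from the two preceding global results: the Generalized Jorge--Meeks formula (Theorem~\ref{GB3}) supplies the exact value of the total curvature, and Proposition~\ref{ineq-multiplicity} supplies a lower bound on each end multiplicity that converts the resulting equality into the desired inequality. Since by hypothesis there are no bad singular ends, every puncture $p_j$ is either a regular end or a good singular end, so Theorem~\ref{GB3} applies and yields the identity
\[
\int_M K\mathrm{d}M = 2\pi\Big(2-2g-r-\sum_{j=1}^r \widetilde{d}_j\Big).
\]

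Next I would invoke completeness at the ends. For a complete regular or good singular end, Proposition~\ref{ineq-multiplicity} gives $\widetilde{d}_j \ge 1$; summing over the $r$ ends yields $\sum_{j=1}^r \widetilde{d}_j \ge r$. Substituting this bound into the identity above replaces the equality by
\[
\int_M K\mathrm{d}M \le 2\pi(2-2g-r-r) = 2\pi(2-2g-2r).
\]
It then remains only to identify the right-hand side with the two expressions in the statement. Because $M=\overline{M}\setminus\{q_1,\dots,q_r\}$ is a closed surface of genus $g$ with $r$ punctures, $\chi(M)=\chi(\overline{M})-r=(2-2g)-r$, so that $\chi(M)-r=2-2g-2r$ and hence $2\pi(\chi(M)-r)=4\pi(1-g-r)$, exactly as claimed.

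The argument is essentially a two-line consequence of the cited theorems, so there is no genuine analytic difficulty to overcome; the only point deserving care is that Proposition~\ref{ineq-multiplicity} is phrased for a single annular end that is assumed complete at its puncture. I would therefore first confirm that completeness of the immersion ${\bf x}:M\to\mathbb{R}^4_1$ forces completeness at each $p_j$, so that the bound $\widetilde{d}_j\ge 1$ is legitimately available at all $r$ ends simultaneously. Once this is in place the inequality is immediate, and one reads off from the chain of estimates that equality holds precisely when every end attains the minimal multiplicity $\widetilde{d}_j=1$.
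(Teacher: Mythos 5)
Your proposal is correct and follows exactly the route the paper intends: the corollary is presented as an immediate consequence of the generalized Jorge--Meeks formula (Theorem~\ref{GB3}), whose hypothesis of only regular or good singular ends is equivalent to the absence of bad singular ends, combined with the end-multiplicity bound $\widetilde{d}_j\ge 1$ of Proposition~\ref{ineq-multiplicity}, together with the Euler-characteristic bookkeeping $\chi(M)=2-2g-r$. Your added care about completeness of the immersion restricting to completeness at each puncture is precisely the point that legitimizes applying Proposition~\ref{ineq-multiplicity} at every end, and matches the implicit assumption in the paper's setting.
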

\begin{corollary}
[Quantization of total Gaussian curvature \cite{Ma}]
\label{cor-quantization}
Under the same assumptions of the theorem above,
when $\phi,\psi$ are not constants (equivalently, when ${\bf x}$ is not a
flat surface in $\mathbb{R}^3_0$), there is always
\[
-\int_M K\mathrm{d}M=4\pi k\ge 4\pi,
\]
where $k\ge 1$ is a positive integer.
\end{corollary}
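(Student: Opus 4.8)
The plan is to read off the quantization directly from the degree formula \eqref{eq-deg1} and then to pin down positivity separately. Rewriting \eqref{eq-deg1} gives $-\int_M K\mathrm{d}M = 4\pi k$ with
\[
k \triangleq \deg\phi - \sum_j \mathrm{ind}^{1,0}_{p_j}(\phi-\bar\psi).
\]
My first task would be to check that $k$ is an integer. The mapping degree $\deg\phi$ of the meromorphic function $\phi$ on the compact surface $\overline{M}$ is a non-negative integer, and by \eqref{eqind} each $\mathrm{ind}_{p_j}$ is an integer with $\mathrm{ind}^+_{p_j}=|\mathrm{ind}_{p_j}|$. Hence by \eqref{eqind10} the quantity $\mathrm{ind}^{1,0}_{p_j}=\tfrac12(\mathrm{ind}^+_{p_j}+\mathrm{ind}_{p_j})$ equals $\max(\mathrm{ind}_{p_j},0)$, a non-negative integer, so that $k\in\mathbb{Z}$.

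Next I would establish $k\ge 0$ and, in most configurations, $k\ge 1$. The Chern--Osserman inequality (Corollary~\ref{GB2C}) gives $\int_M K\mathrm{d}M\le 4\pi(1-g-r)$, that is $-\int_M K\mathrm{d}M\ge 4\pi(g+r-1)$. Since the surface is complete and non-compact we have $r\ge 1$, so $g+r-1\ge 0$ and therefore $k\ge 0$. Moreover, whenever $g+r\ge 2$ this same bound already yields $-\int_M K\mathrm{d}M\ge 4\pi$, i.e. $k\ge 1$, and the proof is finished in that range. The only remaining configuration is $g=0,\ r=1$, where the integral bound degenerates to $k\ge 0$ and must be treated by hand.

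In this exceptional case $\overline{M}=S^2$ with a single end $p_1$, I would exploit the complementary formula \eqref{eq-deg2}, namely $k=\deg\psi-\sum_j\mathrm{ind}^{0,1}_{p_j}$, together with the fact that at one point only one of the two indices can be nonzero. Indeed, by \eqref{eqind}--\eqref{eqind10}, if $\phi$ and $\bar\psi$ meet at $p_1$ with multiplicities $m$ and $n$, then $m<n$ forces $\mathrm{ind}^{0,1}_{p_1}=0$ while $m>n$ forces $\mathrm{ind}^{1,0}_{p_1}=0$ (and a regular end has both indices zero); the bad case $m=n$ is excluded by hypothesis. Hence $k=\deg\psi$ in the first situation and $k=\deg\phi$ in the second. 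Since $\phi$ and $\psi$ are assumed non-constant, both $\deg\phi\ge 1$ and $\deg\psi\ge 1$, so $k\ge 1$ in every subcase.

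Collecting the two cases yields $k\ge 1$, hence $-\int_M K\mathrm{d}M=4\pi k\ge 4\pi$ with $k$ a positive integer. The one genuinely delicate point I anticipate is precisely the single-end genus-zero case: there the averaged Chern--Osserman bound is too weak (it only gives $k\ge 0$), and positivity must instead be extracted from the degree count in \eqref{eq-deg2}, where the non-constancy hypothesis on $\phi,\psi$ is exactly what excludes the flat surfaces in $\mathbb{R}^3_0$ permitted by the standing convention.
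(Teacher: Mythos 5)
Your proposal is correct, and the two-step structure (quantization from \eqref{eq-deg1}, positivity by a separate argument) is exactly what the statement requires. A caveat on the comparison: this paper does not reprove the corollary --- it is quoted from \cite{Ma} as an immediate consequence of Theorem~\ref{GB2} --- so there is no in-text proof to match against. Your reconstruction is nonetheless faithful to the paper's toolkit: the integrality of $k=\deg\phi-\sum_j\mathrm{ind}^{1,0}_{p_j}$ follows as you say from $\mathrm{ind}^{1,0}=\max(\mathrm{ind},0)\in\mathbb{Z}_{\ge 0}$, the Chern--Osserman bound of Corollary~\ref{GB2C} disposes of every configuration with $g+r\ge 2$, and your handling of the residual case $g=0$, $r=1$ --- observing that at a single end at most one of $\mathrm{ind}^{1,0},\mathrm{ind}^{0,1}$ is nonzero, so that $k$ equals $\deg\phi$ or $\deg\psi$, each at least $1$ by non-constancy --- is precisely the device the paper itself deploys in Cases~1 and~3 of Section~3 (``Since there is only one end, at least one of the indices $\mathrm{ind}^{1,0},\mathrm{ind}^{0,1}$ is zero''). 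You also correctly flag where the hypotheses enter: the no-bad-singular-ends assumption excludes the $m=n$ case in which the index is undefined and the integral diverges, and non-constancy of $\phi,\psi$ is what rules out $k=0$, i.e.\ the flat surfaces in $\mathbb{R}^3_0$ excluded by the paper's standing convention. One stylistic remark: via the generalized Jorge--Meeks formula \eqref{eq-jorgemeeks2} the degenerate case reads $-\int_M K\,\mathrm{d}M=2\pi(\widetilde{d}_1-1)$ with $\widetilde{d}_1\ge 1$, so your index argument is in effect excluding $\widetilde{d}_1=1$ (total curvature zero); seeing it this way makes transparent that the single-end case is the only place where genuine work beyond the integral bounds is needed, exactly as you anticipated.
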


\section{Orientable case and examples with $-\int K \mathrm{d}M=4\pi$}

This section is dedicated to the classification of complete
stationary surfaces immersed in $\mathbb{R}^4_1$ with finite
Gaussian curvature $-\int K \mathrm{d}M=4\pi$ which are
orientable and of algebraic type.

Under our hypothesis, the generalized Jorge-Meeks formula
\eqref{eq-jorgemeeks2} yields
\begin{equation}\label{4pi-end}
r+\sum\widetilde{d}_j+2g=4,
\end{equation}
and the index formulas \eqref{eq-deg1}\eqref{eq-deg2} read
\begin{equation}\label{4pi-ind}
\deg\phi-\sum\mathrm{ind}^{1,0}=1,~~\deg\psi-\sum\mathrm{ind}^{0,1}=1.
\end{equation}
Since $r\ge 1$, and $\widetilde{d}_j\ge 1$ for any end, there must be $g\le 1$,
and we need only to consider five cases separately as below.

\vspace{3mm}\noindent
$\bullet$~~\textbf{Case 1: $g=1, r=1, \widetilde{d}=1$ (torus with one end)}.

Since there is only one end, at least one of the indices $\mathrm{ind}^{1,0},\mathrm{ind}^{0,1}$ is zero.
By \eqref{4pi-ind} we know either $\phi$ or $\psi$ is a meromorphic function of degree $1$.
Yet this contradicts the well-known fact that over a torus there do not exist such functions.
So we rule out this possibility.

\vspace{3mm}\noindent
$\bullet$~~\textbf{Case 2: $g=0, r=1, \widetilde{d}=3$ and the unique end is regular}.

Such examples exist and they are generalization of the classical Enneper surface.

\begin{example}[The generalized Enneper surfaces]
\label{exa-enneper} This is given by
\begin{equation}\label{eq-enneper1}
\phi=z,\ \psi=\frac{c}{z}~,\ \mathrm{d}h=s\cdot z\mathrm{d}z,\
\end{equation}
or
\begin{equation}\label{eq-enneper2}
\phi=z+1,\ \psi=\frac{c}{z}~,\ \mathrm{d}h=s\cdot z\mathrm{d}z,
\end{equation}
over $\mathbb{C}$ with complex parameters $c,s\in\mathbb{C}\backslash\{0\}$.
${\bf x}$ has no singular points if and only if the parameter $c=c_1+\mathrm{i}c_2$
is not zero or positive real numbers in \eqref{eq-enneper1},
or
\begin{equation}\label{eq-enneper3}
c_1-c_2^2+\frac{1}{4}<0
\end{equation}
in \eqref{eq-enneper2}. When $c=-1$ in \eqref{eq-enneper1} we obtain
the Enneper surface in $\mathbb{R}^3$.
\end{example}

Indeed they are all examples in Case 2 according to the following result in \cite{Ma}.

\begin{theorem}\label{thm-enneper}\cite{Ma}
A complete immersed algebraic stationary surface in
$\mathbb{R}^4_1$ with $\int K=-4\pi$ and one regular
end is a generalized Enneper surface.
\end{theorem}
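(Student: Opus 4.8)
The plan is to read off the conformal type and the degrees of the Gauss maps from the hypotheses, reduce the Weierstrass data to a short normal form using the symmetry \eqref{trans}, and then turn the regularity condition $\phi\ne\bar\psi$ into explicit constraints on the parameters.

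First I would observe that a single \emph{regular} end carries zero index, so the identities \eqref{4pi-ind} collapse to $\deg\phi=\deg\psi=1$. Since a meromorphic function of degree one exists only on the sphere, $\overline{M}=\mathbb{C}P^1$; this simultaneously excludes the genus-one alternative permitted by \eqref{4pi-end}. Normalizing the end to $\infty$ gives $M=\mathbb{C}$, which is simply connected, so the period conditions 3) of Theorem~\ref{thm-period} hold automatically and only the regularity conditions 1), 2) remain.

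Next I would normalize the data. As $\deg\phi=1$, $\phi$ is a fractional linear map, and post-composing via the action \eqref{trans} makes $\phi=z$, whose unique pole is the end $\infty$. Condition 1) then forces the pole of $\psi$ to be finite, and after translating it to the origin we have $\phi=z+p$ and $\psi=a+b/z$ with $b\ne0$. A direct computation gives the induced metric $\mathrm{d}s^2=4|\phi-\bar\psi|^2|\mathrm{d}h|^2$; since $\phi-\bar\psi$ has no zero on $M$ (regularity), $\mathrm{d}h$ can have no pole on $\mathbb{C}$, while condition 2) says its only zero there is the simple one at the pole of $\psi$. Hence $\mathrm{d}h=sz\,\mathrm{d}z$. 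Finally the transformation $\phi\mapsto\phi+t,\ \psi\mapsto\psi+\bar t$ of \eqref{trans} (which fixes $\mathrm{d}h$) absorbs the constant $a$ into the affine part of $\phi$, and after a rescaling one is left with exactly the two normal forms $\phi=z,\ \psi=c/z$ and $\phi=z+1,\ \psi=c/z$, both with $\mathrm{d}h=sz\,\mathrm{d}z$, of Example~\ref{exa-enneper}. Here I would also check that ${\bf x}_z\mathrm{d}z$ has a pole of order $4$ at $\infty$, so that $d=\widetilde d=3$ as demanded by \eqref{4pi-end}, and that the end is complete since $\mathrm{d}s^2\sim|z|^4|\mathrm{d}z|^2$ there.

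The heart of the matter is the regularity condition $\phi\ne\bar\psi$ throughout $\mathbb{C}$, i.e. the non-solvability of \eqref{eq-singular}, which is precisely what pins down the admissible parameters. For $\phi=z,\ \psi=c/z$ the equation $\phi=\bar\psi$ becomes $|z|^2=\bar c$, which has a solution iff $c$ is a positive real, so ruling this out (together with $c\ne0$) gives the first condition in Example~\ref{exa-enneper}. For $\phi=z+1,\ \psi=c/z$ it becomes $|z|^2+\bar z=\bar c$; writing $z=x+\mathrm{i}y$ and separating real and imaginary parts yields $y=c_2$ and the quadratic $x^2+x+(c_2^2-c_1)=0$, whose discriminant is negative exactly when $c_1-c_2^2+\tfrac14<0$, which is \eqref{eq-enneper3}. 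In general I expect the analysis of the mixed holomorphic/anti-holomorphic equation \eqref{eq-singular} to be the true obstacle, but in the degree-one situation it reduces to this elementary real computation. Since every surface in this case is thereby shown to be congruent to a member of these two families, it is a generalized Enneper surface, completing the proof.
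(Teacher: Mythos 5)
Your proposal is correct and proceeds along essentially the same lines as the paper's framework (the paper itself defers this proof to \cite{Ma}): a single regular end forces $\deg\phi=\deg\psi=1$ via \eqref{4pi-ind}, hence $\overline{M}=\mathbb{C}P^1$ and $M=\mathbb{C}$ with trivial periods, the normalization by \eqref{trans} and affine coordinate changes yields precisely the data of Example~\ref{exa-enneper}, and your elementary analysis of $\phi=\bar\psi$ recovers exactly the stated parameter conditions, including \eqref{eq-enneper3}. One pedantic correction: the finiteness of the pole of $\psi$ follows from the hypothesis that the end at $\infty$ is \emph{regular} (i.e.\ $\phi(\infty)\ne\bar\psi(\infty)$, which fails if both Gauss maps have their pole there), not from condition 1) of Theorem~\ref{thm-period}, which constrains only points of $M$ and is vacuous at the puncture.
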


\vspace{3mm}\noindent
$\bullet$~~\textbf{Case 3: $g=0, r=1, \widetilde{d}=3$ with a good singular end}.

Suppose there exists such an example. Without loss of generality we assume that the singular end $p$
has positive index. Since $\mathrm{ind}\ge 1$, by definition we know that at $p$ the function $\psi$
takes the value $\psi(p)$ with multiplicity at least $2$. On the other hand, $\mathrm{ind}^{0,1}=0$
and $\deg\psi=1$, which is a contradiction to the observation above. Hence such examples do not exist.

\vspace{3mm}\noindent
$\bullet$~~\textbf{Case 4: $g=0, r=2, \widetilde{d}_j=1$
and both ends are regular}.

The classical catenoid is one of such examples. The generalization in $\mathbb{R}^4_1$ is
\begin{example}
[The generalized catenoids]
\label{exa-catenoid}
This is defined over $M=\mathbb{C}\backslash\{0\}$ with
\begin{equation}\label{eq-catenoid}
\phi=z+t,\ \psi=\frac{-1}{z-t},\ \mathrm{d}h=s\frac{z-t}{z^2}\mathrm{d}z.~~~~~
(-1<t<1, s\in\mathbb{R}\backslash\{0\})
\end{equation}
\end{example}
When $t=0$, it is the classical catenoid in $\mathbb{R}^3$.

\begin{theorem}\label{thm-catenoid}
\cite{Ma}
A complete immersed algebraic stationary surface in $\mathbb{R}^4_1$
with total curvature $\int K=-4\pi$ and two regular ends is
a generalized catenoid.
\end{theorem}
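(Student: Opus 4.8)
The plan is to reconstruct the Weierstrass data $(\phi,\psi,\mathrm{d}h)$ explicitly from the numerical constraints of this case and then to match it, up to a Lorentz transformation, with the generalized catenoid \eqref{eq-catenoid}. Since $g=0$ and $r=2$, the underlying compact surface is $\overline{M}=\mathbb{C}P^1$, and after a M\"obius change of the coordinate $z$ I would place the two ends at $0$ and $\infty$, so $M=\mathbb{C}\setminus\{0\}$. Both ends being regular makes all indices vanish, so \eqref{4pi-ind} gives $\deg\phi=\deg\psi=1$; thus $\phi$ and $\psi$ are fractional linear functions of $z$. Using the Lorentz freedom \eqref{trans} I would normalize $\phi=z$, and then the regularity requirement that the poles of $\phi$ and $\psi$ be disjoint keeps the pole of $\psi$ away from $\infty$, so $\psi=a+b/(z-z_0)$ with $b\ne0$.

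Next I would pin down $\mathrm{d}h$. Condition~2) of Theorem~\ref{thm-period} forces the zeros of $\mathrm{d}h$ on $M$ to lie exactly at the pole $z_0$ of $\psi$ (the function $\phi=z$ has no pole on $M$), while completeness together with $\widetilde{d}_j=1$ --- equivalently the pole order $d_j+1=2$ of ${\bf x}_z\mathrm{d}z$ at each end, read off from Definition~\ref{def-multiplicity} and Proposition~\ref{ineq-multiplicity} --- fixes the pole orders of $\mathrm{d}h$ at $0$ and $\infty$. Since every meromorphic $1$-form on $\mathbb{C}P^1$ has degree $-2$, these data determine $\mathrm{d}h=c\,(z-z_0)z^{-2}\mathrm{d}z$ up to a constant $c$. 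I would then impose the period conditions of Theorem~\ref{thm-period} on the single generator of $\pi_1(M)$, a loop around $0$: residue computations of $\oint\mathrm{d}h$, $\oint\phi\psi\,\mathrm{d}h$ and $\oint\phi\,\mathrm{d}h=-\overline{\oint\psi\,\mathrm{d}h}$ should yield $c\in\mathbb{R}$, $b\in\mathbb{R}$ and the relation $z_0=-\bar a$.

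With the data reduced to $\phi=z$, $\psi=a+b/(z+\bar a)$, $b,c\in\mathbb{R}$, the decisive step is the regularity condition $\phi\ne\bar\psi$ on $M$, i.e.\ equation~\eqref{eq-singular}. Rewriting $z=\overline{\psi(z)}$ as $(z-\bar a)(\bar z+a)=b$ and separating real and imaginary parts, I would identify the solution set as the intersection of a line through the origin with the circle $|z|^2=|a|^2+b$; solutions in $M$ are then absent precisely when $|a|^2+b<0$, the borderline case $|a|^2+b=0$ being excluded since it turns $z=0$ into a singular rather than a regular end. Finally, using the residual rotation $z\mapsto e^{\mathrm{i}\theta}z$ and a scaling, combined with a compensating Lorentz transformation, I would rotate $a$ to be real and rescale $b$ to $-1$, bringing the data to exactly \eqref{eq-catenoid} with $t=-a\in(-1,1)$.

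The main obstacle is this last analytic step: equation~\eqref{eq-singular} couples $z$ and $\bar z$, so it lies outside ordinary complex-analytic methods and one must argue directly about its real-analytic zero set. In the present degree-one situation this collapses to the tractable ``line meets circle'' picture above, but it is exactly this kind of mixed equation that the authors flag as the recurring source of difficulty, and some care is needed to treat the borderline case $|a|^2+b=0$ and the degenerate configuration $a=0$ --- the classical catenoid, whose pole of $\psi$ sits at the end $z=0$ --- within one uniform normalization.
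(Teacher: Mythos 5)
Your proposal is correct, and it is worth noting at the outset that the paper itself gives no proof of Theorem~\ref{thm-catenoid}: the statement is imported from \cite{Ma}. So there is no internal argument to compare line by line; what can be said is that your reconstruction follows precisely the template the paper does spell out for the neighbouring Case~5 (the derivation of \eqref{eq-case52}--\eqref{eq-case54}): put the punctures at $0,\infty$, normalize one Gauss map by the Lorentz freedom \eqref{trans}, pin down $\mathrm{d}h$ from condition~2) of Theorem~\ref{thm-period} on $M$ plus the pole orders $d_j+1=2$ dictated by $\widetilde d_j=1$ and the degree $-2$ of a form on $\mathbb{C}P^1$, reduce the period conditions to residues at $0$, and only then fight the mixed equation \eqref{eq-singular}. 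Your computations check out: with $\phi=z$, $\psi=a+b/(z-z_0)$, $\mathrm{d}h=c(z-z_0)z^{-2}\mathrm{d}z$ one gets $\mathrm{Res}_0\,\mathrm{d}h=c$, $\mathrm{Res}_0\,\phi\,\mathrm{d}h=-cz_0$, $\mathrm{Res}_0\,\psi\,\mathrm{d}h=ca$, $\mathrm{Res}_0\,\phi\psi\,\mathrm{d}h=c(b-az_0)$, which via \eqref{eq-period1}--\eqref{eq-period2} force $c\in\mathbb{R}$, $z_0=-\bar a$, $b\in\mathbb{R}$ as you claim; and $z=\overline{\psi(z)}$ becomes $(z-\bar a)(\bar z+a)=b$, whose real and imaginary parts are exactly $|z|^2=|a|^2+b$ and $\beta x+\alpha y=0$ (writing $a=\alpha+\mathrm{i}\beta$), so nontrivial solutions exist if and only if $|a|^2+b>0$, while the borderline $|a|^2+b=0$ gives $\overline{\psi(0)}$ the value $0=\phi(0)$ (since $\bar a+b/a=(|a|^2+b)/a$), turning $z=0$ into a singular end, contrary to hypothesis --- your exclusion of the borderline is thus airtight. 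The two points you flag as delicate are indeed the only ones needing care, and both go through: for $a=0$ the pole of $\psi$ sits at the puncture where condition~2) does not literally apply, but the pole-order count at the end still yields $\mathrm{d}h=cz^{-1}\mathrm{d}z$, i.e.\ the same formula with $z_0=0$; and the residual freedom $z\mapsto\lambda z$ with compensating target dilation acts by $(a,b)\mapsto(a/\bar\lambda,\,b/|\lambda|^2)$, preserving $b\in\mathbb{R}$ and the inequality $b<-|a|^2$ (which forces $b<0$), so one may normalize $b=-1$ and $a$ real with $|a|<1$, and the translation $\phi\mapsto\phi-a$, $\psi\mapsto\psi-a$ lands exactly on \eqref{eq-catenoid} with $t=-a\in(-1,1)$. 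In short: a correct, self-contained proof, consistent in method with the paper's own handling of the singular-end case, where the same mixed equation \eqref{eq-singular} is genuinely hard (Lemma~\ref{lem-main}), whereas here, as you observe, it degenerates to a line meeting a circle.
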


\vspace{3mm}\noindent
$\bullet$~~\textbf{Case 5: $g=0, r=2, \widetilde{d}_j=1$ with at least one good singular ends}.

This is the most difficult case in our discussion. We will show step by step that there are no such examples.

First, assume there is such a surface. We assert that it must have two singular ends whose indices
have opposite signs. Otherwise, if there is only one good singular end
which might be assumed to have positive index, similar to the discussion
in Case 3 we can show $\deg\psi=1$ and $\psi$ has multiplicity greater than $1$ at the end,
which is a contradiction. In the same way we can rule out the possibility that
both ends are singular with the same signs.

Second, without loss of generality we may suppose $M=\mathbb{C}\backslash\{0\}$ and the good singular ends are $0$ and $\infty$,
with $\mathrm{ind}_0=m\ge 1, \mathrm{ind}_\infty<0$.
By \eqref{4pi-ind}, $\mathrm{ind}^{1,0}=m, \deg\phi=m+1$.
If $\mathrm{ind}_\infty\le -m-1$, by definition we know
$\psi$ has multiplicity at least $m+1$ at $z=\infty$
where $\phi$ must has higher multiplicity, which is impossible since $\deg\phi=m+1$.
If $\mathrm{ind}_\infty\ge -m+1$, by definition and \eqref{4pi-ind} we know
$\mathrm{ind}^{0,1}\le m-1, \deg\psi\le m$, which contradicts the requirement that $\psi$ must
has multiplicity greater than $m$ at the first end $z=0$. In summary there must be
\begin{equation}\label{eq-case51}
\mathrm{ind}_0=m\ge 1,~~ \mathrm{ind}_\infty=-m,~~\deg\phi=\deg\psi=m+1\ge 2.
\end{equation}
We observe that $\phi(0)\ne\phi(\infty)$. Otherwise, since $z=0,\infty$ are both singular ends,
there must be $\psi(0)=\phi(0)=\phi(\infty)=\psi(\infty)$. Because $z=0$ is a good singular end
and $\mathrm{ind}_0=m$, $\psi$ has multiplicity at least $m+1$ at $z=0$ and multiplicity $m$ at $\infty$.
This is impossible when $\deg\psi=m+1, m\ge 1$.

This observation enables us to make the following normalization. Without loss of generality,
suppose $\phi(0)=\psi(0)=0,\phi(\infty)=\psi(\infty)=\infty$. Since meromorphic functions $\phi,\psi$
must be rational functions satisfying restrictions \eqref{eq-case51}, we know
\begin{equation}\label{eq-case52}
\phi(z)=z^m(z-a),~~\psi(z)=\frac{z^{m+1}}{z-b},~~\mathrm{d}h=\rho\frac{z-b}{z^k}\mathrm{d}z,
\end{equation}
where $a,b,\rho$ are arbitrary nonzero complex parameters. Note that $\mathrm{d}h$ takes
the form as above because $M$ is regular at $z=b$. On the other hand, at the ends $z=0$ and $z=\infty$
it should satisfy
$\widetilde{d}_0\ge 1,\widetilde{d}_{\infty}\ge 1$
according to Proposition~\ref{ineq-multiplicity},
which implies $k=m+2$ by the definition of $\widetilde{d}$.

After fixing the form of $\phi,\psi,\mathrm{d}h$, we verify the period conditions. It is easy to see
that the vertical period conditions are satisfied. The horizontal period conditions are satisfied if
and only if $a+b=-\bar\rho/\rho.$
In summary, such examples have Weierstrass data
\begin{equation}\label{eq-case53}
\phi(z)=z^m(z-a),~~\psi(z)=\frac{z^{m+1}}{z-b},
~~\mathrm{d}h=\rho\frac{z-b}{z^{m+2}}\mathrm{d}z,
\end{equation}
with parameters
\begin{equation}\label{eq-case54}
m\ge 1,~a,b,\rho\in \mathbb{C}\backslash{0},~a+b=-\bar\rho/\rho.
\end{equation}

If we can find nonzero parameters $a,b,\rho$ as above
so that the regularity condition $\phi\ne\bar\psi$
holds true for any $ z\in\mathbb{C}\cup\{\infty\}$,
then new examples with $-\int K\mathrm{d}M=4\pi$ are found.
But according to Lemma~\ref{lem-main} in Appendix A,
for any given nonzero parameters $a,b,\rho$ there always exist
nonzero solutions $z$ to the equation $\phi(z)=\overline{\psi(z)}$ for $\phi,\psi$ given above.
We conclude that there exist no examples in Case 5,
The proof to the following theorem has been finished.

\begin{theorem}\label{thm-4pi1}
Complete regular algebraic stationary surfaces
$x:M\to\mathbb{R}^4_1$ with $-\int K\mathrm{d}M=4\pi$
are either the generalized catenoids or the generalized
Enneper surfaces under the assumption that $M$ is orientable.
\end{theorem}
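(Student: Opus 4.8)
The plan is to translate the total-curvature hypothesis into rigid combinatorial data and then exhaust the finitely many resulting configurations. First I would feed $-\int_M K\,\mathrm{d}M=4\pi$ into the generalized Jorge-Meeks formula \eqref{eq-jorgemeeks2} to obtain $r+\sum_j\widetilde{d}_j+2g=4$, and into the index identities \eqref{eq-deg1}\eqref{eq-deg2} to obtain $\deg\phi-\sum_j\mathrm{ind}^{1,0}=\deg\psi-\sum_j\mathrm{ind}^{0,1}=1$. Because every end satisfies $\widetilde{d}_j\ge 1$ by Proposition~\ref{ineq-multiplicity} and $r\ge 1$, the first relation forces $g\le 1$ and restricts the admissible tuples $(g,r,\{\widetilde{d}_j\})$ to a short list, which splits further according to whether each end is regular or a good singular end.

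Next I would dispose of the tractable configurations one at a time. The torus with a single end is impossible, since the degree relation would demand a meromorphic function of degree one on a genus-one surface. For the remaining genus-zero possibilities, whenever all ends are regular the surface is already classified: a single regular end with $\widetilde{d}=3$ yields the generalized Enneper surfaces (Theorem~\ref{thm-enneper}), and two regular ends with $\widetilde{d}_j=1$ yield the generalized catenoids (Theorem~\ref{thm-catenoid}). The configurations containing exactly one good singular end are excluded by a uniform multiplicity argument: a nonzero index at an end forces $\phi$ or $\psi$ to attain a value there with multiplicity exceeding its total degree, contradicting \eqref{eq-deg1}\eqref{eq-deg2}.

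This leaves the twice-punctured sphere carrying at least one good singular end as the genuinely hard case. Here I would first show, again by the multiplicity obstruction, that both ends must be singular with indices of opposite sign, so that after placing them at $0$ and $\infty$ one has $\mathrm{ind}_0=m$, $\mathrm{ind}_\infty=-m$, and $\deg\phi=\deg\psi=m+1$. A Lorentz normalization of the boundary values then pins the Weierstrass data down to the explicit rational forms \eqref{eq-case53}, with the parameters $a,b,\rho$ constrained only by the horizontal period relation $a+b=-\bar\rho/\rho$. The crux is to show that no admissible choice yields a regular immersion, i.e. that the mixed equation \eqref{eq-singular} $\phi(z)=\overline{\psi(z)}$ always admits a nonzero root. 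I expect this final step to be the principal obstacle: because \eqref{eq-singular} couples a holomorphic with an anti-holomorphic function, it is inaccessible to the usual argument-principle or degree-counting reasoning, and one is driven into the explicit algebraic analysis carried out in Lemma~\ref{lem-main}.
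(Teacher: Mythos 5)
Your proposal is correct and follows essentially the same route as the paper: the same reduction via \eqref{eq-jorgemeeks2} and \eqref{eq-deg1}--\eqref{eq-deg2} to the five cases with $g\le 1$, the same degree and multiplicity obstructions for the torus and the single-singular-end configurations, the same appeal to Theorems~\ref{thm-enneper} and \ref{thm-catenoid} for the all-regular cases, and the same normalization to \eqref{eq-case53} with $a+b=-\bar\rho/\rho$ followed by Lemma~\ref{lem-main} to produce a root of $\phi=\bar\psi$ in the remaining case. You have also correctly identified the mixed holomorphic/anti-holomorphic equation as the genuine crux, exactly as the paper does.
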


Another interesting observation is that if we make change of variables $z=w^2$ in \eqref{eq-case52},
and choose the power $k$ to be a even number suitably,
then the period conditions always hold true and we don't need the restriction $a+b=-1$ in
\eqref{eq-case54}. In this situation, if parameters $a=b$ is chosen suitably, the regularity
condition $\phi\ne\bar\psi$ is satisfied. See Lemma~\ref{lem-a=b}. In this way we find a
complete, immersed stationary surface in $\mathbb{R}^4_1$, yet with total curvature
$-\int K\mathrm{d}M=8\pi$. See the example below (which has appeared in \cite{Ma}).

\begin{example}
[Genus zero, two good singular ends and $\int_M K\mathrm{d}M=-8\pi$]
\label{exa-singular1}
\[
M=\mathbb{C}\backslash\{0\},~\phi=w^2(w^2-a),~\psi=\frac{w^4}{w^2-a},
~\mathrm{d}h=\frac{w^2-a}{w^4}\mathrm{d}w.~~(a\in\mathbb{C}\backslash\{0\})
\]
The regularity, completeness and period conditions
are satisfied when $-a$ is a sufficiently large
positive real number (e.g. $-a>1$). For the proof of regularity,
see Lemma~\ref{lem-a=b}.
\end{example}

\section{Non-orientable stationary surfaces and examples}

In this section we will consider non-orientable algebraic stationary surfaces and show that
the total curvature of them is always greater than $4\pi$.
For this purpose we need to consider their oriented double covering surface $\widetilde{M}$,
and characterize the Weierestrass data over $\widetilde{M}$.
This is a natural extension of Meeks' characterization of
non-orientable minimal surfaces in $\mathbb{R}^3$ \cite{Meeks}.

\subsection{Representation of non-orientable stationary surfaces}

\begin{theorem}\label{thm-nonorientable}
Let $\widetilde{M}$ be a Riemann surface with an anti-holomorphic involution
$I:\widetilde{M}\to \widetilde{M}$ (i.e., a conformal automorphism of
$\widetilde{M}$ reversing the orientation) without fixed points.
Let $\{\phi,\psi,\mathrm{d}h\}$ be a set of Weierstrass data on $\widetilde{M}$
such that
\begin{equation}\label{eq-nonorientable}
\phi\circ I=\bar\psi,~~
\psi\circ I=\bar\phi,~~
I^*\mathrm{d}h=\overline{\mathrm{d}h},
\end{equation}
which satisfy the regularity and period conditions as well.
Then they determine a non-orientable stationary surface
\[
M=\widetilde{M}/\{\mathrm{id},I\}\to \mathbb{R}^4_1
\]
by the Weierstrass representation formula \eqref{x}.

Conversely, any non-orientable stationary surface ${\bf x}:M\to \mathbb{R}^4_1$
could be constructed in this way.
\end{theorem}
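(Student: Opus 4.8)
The plan is to set up the correspondence between non-orientable stationary surfaces and $I$-invariant Weierstrass data in two directions, relying on Theorem~\ref{thm-period} to guarantee that an immersion exists on the covering surface $\widetilde{M}$, and then checking exactly which symmetry of the data descends to the quotient $M=\widetilde{M}/\{\mathrm{id},I\}$. First I would verify the forward direction. Given $\{\phi,\psi,\mathrm{d}h\}$ satisfying the regularity and period conditions, Theorem~\ref{thm-period} produces a stationary immersion ${\bf x}:\widetilde{M}\to\mathbb{R}^4_1$ via \eqref{x}. The key computation is to show that the relations \eqref{eq-nonorientable} force ${\bf x}\circ I={\bf x}$, so that ${\bf x}$ factors through the quotient. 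This amounts to substituting $\phi\circ I=\bar\psi$, $\psi\circ I=\bar\phi$, and $I^*\mathrm{d}h=\overline{\mathrm{d}h}$ into the four coordinate integrands of \eqref{x} and confirming that the pullback $I^*$ of the integrand $\big(\phi+\psi,\,-\mathrm{i}(\phi-\psi),\,1-\phi\psi,\,1+\phi\psi\big)\mathrm{d}h$ equals its own complex conjugate; since ${\bf x}=2\,\mathrm{Re}\int(\cdots)\mathrm{d}h$ takes the real part, the anti-holomorphic involution leaves ${\bf x}$ unchanged. For instance the first entry pulls back to $(\bar\psi+\bar\phi)\overline{\mathrm{d}h}=\overline{(\phi+\psi)\mathrm{d}h}$, and similarly $\phi\psi\circ I=\bar\psi\bar\phi=\overline{\phi\psi}$ handles the last two entries; the factor $-\mathrm{i}$ in the second entry is real-part-compatible in the same way. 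Because $I$ is fixed-point-free and orientation-reversing, the quotient $M$ is a non-orientable surface, and the descended map is the desired stationary immersion.

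For the converse, I would start from a non-orientable stationary surface ${\bf x}:M\to\mathbb{R}^4_1$ and pass to its oriented double cover $\pi:\widetilde{M}\to M$, which carries a canonical fixed-point-free deck transformation $I$ that reverses orientation; since $M$ inherits a conformal (stationary) structure, $I$ is anti-holomorphic. The lifted immersion $\widetilde{{\bf x}}={\bf x}\circ\pi$ is an orientable stationary surface on $\widetilde{M}$, so by Theorem~\ref{thm-period} it admits Weierstrass data $\{\phi,\psi,\mathrm{d}h\}$ satisfying the regularity and period conditions. The invariance $\widetilde{{\bf x}}\circ I=\widetilde{{\bf x}}$ then translates into the symmetry relations \eqref{eq-nonorientable}. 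The main point here is that the Gauss maps $\phi,\psi$ are determined by $\widetilde{{\bf x}}$ up to the fractional-linear ambiguity of Remark~\ref{rem-trans}, so I must show that the pulled-back data $\{\phi\circ I,\psi\circ I,I^*\mathrm{d}h\}$ represent the \emph{same} immersion $\widetilde{{\bf x}}$ and hence differ from $\{\bar\psi,\bar\phi,\overline{\mathrm{d}h}\}$ by at most this ambiguity; normalizing the frame (or using that the projective light-cone Gauss maps $[{\bf y}],[{\bf y}^*]$ are intrinsic and get interchanged by the orientation reversal) pins down \eqref{eq-nonorientable} exactly.

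The step I expect to be the main obstacle is the converse's identification of the Gauss maps: an anti-holomorphic deck transformation both conjugates the local holomorphic/anti-holomorphic pair $\{\phi,\bar\psi\}$ and, because it reverses orientation, \emph{swaps} the two null normal directions ${\bf y}\leftrightarrow{\bf y}^*$ in the frame \eqref{scaling}. I would argue carefully that this swap is precisely what produces $\phi\circ I=\bar\psi$ rather than $\phi\circ I=\bar\phi$, tracking how the orientation-reversal interacts with the choice ${\bf x}_z\mathrm{d}z$ versus ${\bf x}_{\bar z}\mathrm{d}\bar z$ underlying the Weierstrass data. The forward direction, by contrast, is a direct (if slightly tedious) verification that the integrand is $I$-anti-invariant, and the regularity and period conditions transfer automatically because they are stated as hypotheses on $\widetilde{M}$ and are preserved under the quotient.
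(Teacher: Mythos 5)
Your proposal is correct and takes essentially the same route as the paper's proof: the forward direction by checking that \eqref{eq-nonorientable} makes the $I$-pullback of the Weierstrass integrand equal to its complex conjugate (so the real part of the integral is $I$-invariant and the immersion descends to the free, orientation-reversing quotient), and the converse by lifting to the oriented double cover and reading off \eqref{eq-nonorientable} from the data of $\tilde{\bf x}$ in the conjugated chart $w=\bar z$, with the interchange $[{\bf y}]\leftrightarrow[{\bf y}^*]$ explaining why one gets $\phi\circ I=\bar\psi$ rather than $\bar\phi$ --- precisely the paper's remark after the theorem. One minor inaccuracy worth noting: for a fixed immersion with fixed orientation the data $\{\phi,\psi,\mathrm{d}h\}$ is uniquely determined by ${\bf x}_z\,\mathrm{d}z$ (the fractional-linear ambiguity of Remark~\ref{rem-trans} reflects ambient Lorentz motions, not re-representations of the same surface), so the normalization step you anticipate in the converse is unnecessary, and your geometric resolution via the swapped null normal directions already yields \eqref{eq-nonorientable} exactly.
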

\begin{remark}
Geometrically, \eqref{eq-nonorientable} is the consequence of reversing the
orientation of the tangent plane by $z\to \bar{z}$, and reversing the induced
orientation of the normal plane by interchanging the lightlike normal directions
$[{\bf y}],\ [{\bf y}^*]$.
\end{remark}
\begin{proof}[Proof to Theorem \ref{thm-nonorientable}]
We prove the converse first.
It is well-known that any non-orientable surface $M$ has a orientable
two-sheeted covering surface $\widetilde{M}$ with an orientation-reversing
homeomorphism $I$, and $M$ is realized as the quotient surface
\[
M=\widetilde{M}/\mathbb{Z}_2=\widetilde{M}/\{\mathrm{id},I\}.
\]
Denote $\pi$ the quotient map.
Notice that $\widetilde{M}$ is endowed with the complex structure
induced from the metric.
When $z$ is a local complex coordinate over a domain $U\subset\widetilde{M}$
which projects to $M$ one-to-one,
$\bar{z}$ is also a coordinate over $I(U)$ compatible with the orientation on $\widetilde{M}$.

Consider the stationary surface
$\tilde{\bf x}\triangleq {\bf x}\circ \pi:\widetilde{M}\to \mathbb{R}^4_1$.
In the chart $(U,z)$ we have
\[
\tilde{{\bf x}}_z \mathrm{d}z=\Big(\phi+\psi, -\mathrm{i}(\phi-\psi),1-\phi\psi,1+\phi\psi\Big)\mathrm{d}h~.
\]
Then in the corresponding chart $(I(U),w=\bar{z})$,
consider $\tilde{{\bf x}}^*=\tilde{{\bf x}}\circ I:I(U)\to \mathbb{R}^4_1$
and we have
\[
\tilde{{\bf x}}^*_w \mathrm{d}w=\Big(\bar\phi+\bar\psi, \mathrm{i}(\bar\phi-\bar\psi),1-\bar\phi \bar\psi,1+\bar\phi \bar\psi\Big)\mathrm{d}\bar{h}~.
\]
This implies \eqref{eq-nonorientable}.

Now we prove the first part. If $M=\widetilde{M}/\{\mathrm{id},I\}$ as described
in the theorem and $\phi,\psi,\mathrm{d}h$ satisfy condition \eqref{eq-nonorientable}
as well as the regularity and period conditions,
then the integral along any path $\gamma\subset\widetilde{M}$
yields two stationary surfaces
\begin{eqnarray*}
\tilde{\bf x} \!&=& 2~\mathrm{Re}\int_\gamma \Big(\phi+\psi, -\mathrm{i}(\phi-\psi),1-\phi\psi,1+\phi\psi\Big)\mathrm{d}h,\\
\tilde{{\bf x}}\circ I~=~\tilde{{\bf x}}^*\!&=& 2~\mathrm{Re}\int_\gamma\Big(\bar\psi+\bar\phi, -\mathrm{i}(\bar\psi-\bar\phi),1-\bar\psi \bar\phi,1+\bar\psi \bar\phi\Big)\mathrm{d}\bar{h}~.
\end{eqnarray*}
If we assign the same initial value, then after either integration above
we get the same result, because they are the real parts of
a holomorphic vector-valued function and its complex conjugate.
So $p\in \widetilde{M}$ and $I(p)\in\widetilde{M}$ are mapped to
the same point in $\mathbb{R}^4_1$, yet with opposite induced orientations
on the same surface. After taking quotient we get a stationary
immersion of the non-orientable $M$ into $\mathbb{R}^4_1$.
This finishes the proof.
\qed\end{proof}

As an application of this theorem, we give a natural generalization
of Meeks and Oliveira's construction of minimal M\"obius strip.

\begin{example}
[Generalization in $\mathbb{R}^4_1$ of Meeks' minimal M\"obius strip]
\label{exa-meeks}
This is defined on $\widetilde{M}=\mathbb{C}\backslash\{0\}$ with involution $I:z\to -1/\bar{z}$,
and the Weierstrass data be
\begin{equation}
\phi=\frac{z-\lambda}{z-\bar\lambda}\cdot z^{2m},~~
\psi=\frac{1+\bar\lambda z}{1+\lambda z}\cdot\frac{1}{z^{2m}},~~
\mathrm{d}h=\mathrm{i}\frac{(z-\bar\lambda)(1+\lambda z)}{z^2}\mathrm{d}z,
\end{equation}
where $\lambda$ is a complex parameter satisfing
$\lambda\ne \pm 1, |\lambda|=1$, and the integer $m\ge 1$.
\end{example}
\begin{remark}
When $\lambda=\pm\mathrm{i}$ we have $\phi=-1/\psi$, and the example above
is equivalent to Oliveira's examples in $\mathbb{R}^3$
 \cite{Oliveira}.
(Meeks' example \cite{Meeks} corresponds to the case $m=1$.)
Otherwise this is a full map in $\mathbb{R}^4_1$.
Furthermore, for fixed $m$ these examples are not congruent to each other
unless the values of the parameter $\lambda$
are the same or differ by complex conjugation, because the cross ratio
\[\mathrm{cr}(0,\infty;\lambda,\bar\lambda)
=\frac{\lambda}{\bar\lambda}\]
between the zeros and poles in the normal form of $\phi$ is an invariant.
\end{remark}
\begin{proposition}\label{prop-olivaira}
Example~\ref{exa-meeks} is a complete immersed stationary
M\"obius strip with a regular end and total Gaussian curvature $2(2m+1)\pi$.
\end{proposition}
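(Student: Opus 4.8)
The plan is to verify, in order, the four assertions packaged into the statement: that Example~\ref{exa-meeks} satisfies the hypotheses of Theorem~\ref{thm-nonorientable} (so it descends to a non-orientable surface), that this quotient is topologically a M\"obius strip, that the immersion is regular and complete with a single end, and finally that its total Gaussian curvature equals $2(2m+1)\pi$.

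First I would check the symmetry conditions \eqref{eq-nonorientable} for the involution $I:z\mapsto -1/\bar z$. This is a direct substitution: compute $\phi\circ I$, i.e. replace $z$ by $-1/\bar z$ in $\phi$, take into account that $|\lambda|=1$ so $\bar\lambda=1/\lambda$, and confirm it equals $\bar\psi$ (and symmetrically $\psi\circ I=\bar\phi$). One then verifies $I^*\mathrm{d}h=\overline{\mathrm{d}h}$ by the same substitution, using $\mathrm{d}(-1/\bar z)=\mathrm{d}\bar z/\bar z^2$ and the explicit factor $\mathrm{i}$ in $\mathrm{d}h$ (the $\mathrm{i}$ is there precisely to absorb the sign from conjugation). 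I would also confirm $I$ is fixed-point-free on $\mathbb{C}\backslash\{0\}$, which is immediate since $z=-1/\bar z$ forces $|z|^2=-1$. Granting this, Theorem~\ref{thm-nonorientable} applies once the regularity and period conditions hold, and the quotient $M=\widetilde M/\{\mathrm{id},I\}$ is non-orientable. Since $\widetilde M=\mathbb{C}\backslash\{0\}$ is an annulus and $I$ is a fixed-point-free orientation-reversing involution, the quotient is a M\"obius strip; this is the standard topological fact underlying Meeks' construction.

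Next comes the analytic content. For the period conditions, I would note that $I$ pairs the generator of $\pi_1(\widetilde M)$ (a loop around $0$) with itself, so by the symmetry \eqref{eq-nonorientable} the horizontal and vertical period integrals \eqref{eq-period1}\eqref{eq-period2} over this loop reduce to reality/conjugation statements that are forced by the involution; concretely, one computes the residues of $\phi\,\mathrm{d}h$, $\psi\,\mathrm{d}h$, $\mathrm{d}h$, and $\phi\psi\,\mathrm{d}h$ at $z=0$ and checks \eqref{eq-period1}\eqref{eq-period2} hold. The end: at $z=0$ (equivalently $z=\infty$, identified by $I$) one reads off the pole order of ${\bf x}_z\,\mathrm{d}z$ to see it is a single regular end. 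The completeness at the end follows from the order of $\mathrm{d}h$ and the Gauss maps there, via Proposition~\ref{ineq-multiplicity}. For the curvature count, since the surface is algebraic with one regular end, I would apply the index formula \eqref{eq-deg4}, $\int_M K\,\mathrm{d}M=-2\pi(\deg\phi+\deg\psi-\sum\mathrm{ind}^+)$, computed on the double cover $\widetilde M$ and then halved for the quotient. Here $\deg\phi=\deg\psi=2m+1$ (reading off zeros/poles), the end is regular so $\mathrm{ind}^+=0$, giving $-\int_{\widetilde M}K=2\pi\cdot 2(2m+1)$, hence $-\int_M K=2(2m+1)\pi$.

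The main obstacle I expect is the regularity condition $\phi\ne\bar\psi$ on all of $\widetilde M$ — precisely the recurring difficulty flagged in the introduction around equation \eqref{eq-singular}. Here I would exploit the structure: because $\phi\circ I=\bar\psi$, any solution of $\phi(z)=\overline{\psi(z)}$ is tied to the fixed-point set of $I$, which is empty; more carefully, $\phi=\bar\psi$ at $z$ would say $\phi(z)=\phi(I(z))\cdot(\text{conjugation data})$, and I would turn this into the statement that a would-be singular point must lie on $|z|=1$ or at a symmetric locus, then use the constraint $\lambda\ne\pm 1,\ |\lambda|=1$ to exclude it. Establishing this cleanly — showing the equation $\phi=\bar\psi$ has \emph{no} solutions for the given parameter range rather than merely generic parameters — is the delicate step, and I would expect to argue it by writing $\phi-\bar\psi$ explicitly, clearing denominators, and showing the resulting real-analytic expression is nonvanishing, possibly reducing to a positivity statement on the unit circle that the hypotheses on $\lambda$ guarantee.
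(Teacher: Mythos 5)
Most of your outline agrees with the paper's proof: the symmetry \eqref{eq-nonorientable} is verified by direct substitution (with $\bar\lambda=1/\lambda$), the quotient is a M\"obius strip, the periods are checked by residues, the end at $\{0,\infty\}$ is regular since $\phi(0)=\lambda^2$ while $\psi(0)=\infty$, and the curvature count via $\deg\phi=\deg\psi=2m+1$ with all indices zero is exactly how the number $2(2m+1)\pi$ arises (the paper's formula \eqref{eq-jorgemeeks3} gives it on $M$ directly, your compute-on-$\widetilde M$-and-halve version is equivalent). Two small corrections to the bookkeeping, though. First, the involution does \emph{not} force the vertical period conditions: $I$ maps the circle $|z|=r$ onto $|z|=1/r$ preserving the generator of $\pi_1$, so \eqref{eq-nonorientable} only yields $\oint\mathrm{d}h=\overline{\oint\mathrm{d}h}$, i.e.\ that the period is \emph{real} --- precisely what must be killed. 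The residue computation is essential, and it succeeds only because $|\lambda|=1$ makes $(z-\bar\lambda)(1+\lambda z)=\lambda z^2-\bar\lambda$ with no linear term; in the paper this is the step that forces $|a|=|b|$, $|c|=|d|$ and hence the normalization $|\lambda|=1$. Second, Proposition~\ref{ineq-multiplicity} \emph{assumes} completeness at an end; it cannot be cited to prove completeness, which must be read off from the pole orders of $\mathrm{d}h$ and the behavior of $\phi,\psi$ at $0,\infty$.

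The genuine gap is your regularity argument, which is the entire analytic content of the proposition. The identity $\phi\circ I=\bar\psi$ converts $\phi(z)=\overline{\psi(z)}$ into $\phi(z)=\phi(I(z))$: the question is whether the degree-$(2m+1)$ map $\phi$ identifies some antipodal pair $\{z,-1/\bar z\}$. This has nothing to do with the fixed-point set of $I$ --- $I$ is fixed-point free, but $\phi$ is far from injective, so a priori such identifications can occur anywhere; in particular a would-be solution need \emph{not} lie on $|z|=1$, so your plan of ``reducing to a positivity statement on the unit circle'' addresses the wrong locus. The paper's proof is genuinely geometric: equating the two expressions gives $|z|^{4m}=\mathrm{cr}\left(z,-1/\bar z;\bar\lambda,\lambda\right)$; since the left side is real, the four points lie on a circle $C$; via stereographic projection, any circle through the antipodal pair $z,-1/\bar z$ meets the unit circle in an antipodal pair, and since $C$ meets it at $\lambda,\bar\lambda$, either $C$ is the unit circle (excluded because then the cross ratio equals $1$, forcing $z=-1/\bar z$ or $\lambda=\pm1$) or $\lambda=\pm\mathrm{i}$; in that last case $z$ and $-1/\bar z$ are separated on $C$ by $\pm\mathrm{i}$, so the cross ratio is \emph{negative}, contradicting $|z|^{4m}>0$. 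Note in particular that $\lambda=\pm\mathrm{i}$ is an allowed parameter in Example~\ref{exa-meeks} (it gives Oliveira's surfaces in $\mathbb{R}^3$), so no argument that merely ``excludes'' parameter values using $\lambda\ne\pm1$, $|\lambda|=1$ can be correct: the exclusion of solutions must happen pointwise, as in the sign contradiction above. Without a substitute for this cross-ratio/circle argument, your proposal does not establish $\phi\ne\bar\psi$ and hence does not prove the proposition.
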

\begin{proof}
We start from a general case, a M\"obius strip
$M=\widetilde{M}/\{\mathrm{id},I\}\to\mathbb{R}^4_1$ with
\[
\widetilde{M}=\mathbb{C}\backslash\{0\},~~I:z\to -1/\bar{z},~~
\phi(z)=\frac{az+b}{cz+d}\cdot z^{2m}.~~~(a,b,c,d\in \mathbb{C}, ad-bc\ne 0)
\]
To satisfy condition \eqref{eq-nonorientable}, there should be
\[
\psi=\overline{\phi(-1/\bar{z})}=
\frac{\bar{b}z-\bar{a}}{\bar{d}z-\bar{c}}\cdot\frac{1}{z^{2m}}~.
\]
The surface is regular outside the ends $\{0,\infty\}$.
Together with $\mathrm{d}h^*=\overline{\mathrm{d}h}$, this implies
\[
\mathrm{d}h=\mathrm{i}\frac{(cz+d)(\bar{d}z-\bar{c})}{z^2}\mathrm{d}z
\]
up to multiplication by a real constant.
Under these conditions it is easy to verify that the metric is complete.

Next, let us check the period conditions. The horizontal periods vanish
automatically since $\phi\mathrm{d}h,\psi\mathrm{d}h$ has no residues
at $0$ and $\infty$. The vertical periods must vanish, hence
$|d|^2=|c|^2,~|b|^2=|a|^2.$ Without loss of generality we may write
\[
\phi=\frac{z-\lambda}{z-\bar\lambda}\cdot z^{2m},~~~|\lambda|=1.
\]
To simplify $\phi$ to this form we have utilized the freedom to change
complex coordinate by $z\to \mu z$ and the (fractional) linear transformation
$\phi\to \mu'\phi$ induced from the Lorentz transformation of $\mathbb{R}^4_1$
(see \eqref{trans}).

We are left to verify $\phi\ne \bar\psi$ over $\mathbb{C}\backslash\{0\}$.
(At the ends $z=0,\infty$ it is obviously true. So they are regular ends.)
Suppose $\phi(z)=\bar\psi(z)$ form some $z\in\mathbb{C}$.
Substitute the expressions of $\phi,\psi$ into it. We obtain
\[
|z|^{4m}=\frac{(z-\bar\lambda)(-1/\bar{z}-\lambda)}
{(z-\lambda)(-1/\bar{z}-\bar\lambda)}
=\mathrm{cr}\left(z,-1/\bar{z};\bar\lambda,\lambda\right).
\]
Since the cross ratio at the right hand side takes a real value,
four points $z,\frac{-1}{\bar{z}};\bar\lambda,\lambda$
are located on a circle $C$ in the complex plane $\mathbb{C}$.

We assert that this circle $C$ could not be identical to the unit circle.
(Otherwise $|z|=1$ and the cross ratio above is $1$. This holds true only if
$z=\frac{-1}{\bar{z}}$, which is impossible, or $\lambda=\bar\lambda=\pm 1$,
which has been ruled out in Example~\ref{exa-meeks}.)

Circle $C$ intersects the unit circle at $\lambda$ and $\bar\lambda$.
Observe that any circle passing through $z,\frac{-1}{\bar{z}}$
will intersect the unit circle at an antipodal point pair.
(Because under the inverse of the standard stereographic projection,
$z,\frac{-1}{\bar{z}}$ correspond to two antipodal points on $S^2$,
and the unit circle corresponds to the equator. Any circle passing through
the inverse images of $z,\frac{-1}{\bar{z}}$ on $S^2$ will intersect
the equator again at two antipodal points. After taking stereographic
projection back to $\mathbb{C}$ we get the conclusion.)
As a consequence, $\lambda=\pm\mathrm{i}$. But this time
the aforementioned cross ratio could only take value as a negative real number
(because on circle $C$, $z,\frac{-1}{\bar{z}}$ must be separated by $\pm\mathrm{i}$).
This contradiction finishes our proof.
\qed\end{proof}

When $m=1$ this example has smallest possible total curvature
$6\pi$ among non-orientable algebraic stationary surfaces.
(Note that the classical Henneberg surface in $\mathbb{R}^3$ has total curvature $2\pi$, yet with four branch points.)
This conclusion is the corollary of a series of propositions
below.

\subsection{Non-orientable stationary surfaces of least total curvature}

In general we are interested in finding least possible total curvature
for non-orientable stationary surfaces of a given topological type.
This is motivated by discussions of F. Martin in \cite{Martin}.
Compared with minimal surfaces in $\mathbb{R}^3$,
this general case looks even more interesting (at least to the authors).

As a consequence of Theorem~\ref{thm-nonorientable}, for a
complete non-orientable stationary surface with double covering
$\widetilde{M}$ of genus $g$ with $2r$ ends, there must be
$\deg\phi=\deg\psi$; the index formula \eqref{eq-deg4}
as well as the Jorge-Meeks formula \eqref{eq-jorgemeeks2} implies
\begin{equation}\label{eq-jorgemeeks3}
-\int_M K=2\pi \Big(\deg\phi-\sum_{j=1}^{r} |\mathrm{ind}_{p_j}|\Big)=2\pi\Big(g+r-1+\sum_{j=1}^{r} \tilde{d}_j\Big)~.
\end{equation}
Because $r\ge 1$ and $\tilde{d}_j\ge 1$, we know
\[-\int_M K=\ge 2\pi(g+1).\]
A better estimation is given in the following proposition.

\begin{proposition}\label{prop-least}
Given a non-orientable surface $M$ whose double covering space \
$\widetilde{M}$ has genus $g$ and finite many punctures,
there does not exist complete algebraic stationary immersion ${\bf x}:M\to\mathbb{R}^4_1$
with total Gaussian curvature $-\int_M K\mathrm{d}M=2\pi(g+1)$.
In other words, under our assumptions there must be
\begin{equation}\label{eq-lowerbound1}
-\int_M K\mathrm{d}M\ge 2\pi(g+2).
\end{equation}
\end{proposition}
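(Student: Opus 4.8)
The plan is to assume equality $-\int_M K\,\mathrm{d}M=2\pi(g+1)$ and derive a contradiction. First I would pin down the extremal configuration. Since $r\ge 1$ and every $\widetilde{d}_j\ge 1$ (Proposition~\ref{ineq-multiplicity}), the second equality in \eqref{eq-jorgemeeks3} forces $r=1$ and $\widetilde{d}_1=1$; in particular $\widetilde{M}$ carries exactly one pair of ends $\{q,I(q)\}$. (That $I$ fixes no end is automatic: an anti-holomorphic involution fixing a puncture would fix a whole real arc through it, contradicting that $I$ is free.) The first equality of \eqref{eq-jorgemeeks3} then gives $\deg\phi=\deg\psi=g+1+m$, where $m=|\mathrm{ind}_{p_1}|\ge 0$, and I would split into the regular-end case ($m=0$, $\deg\phi=g+1$) and the good-singular case ($m\ge 1$, $\deg\phi\ge g+2$).

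The second step is to recast regularity in a form adapted to the involution. Using the symmetry relations \eqref{eq-nonorientable} we have $\bar\psi=\phi\circ I$, so the defining equation $\phi=\bar\psi$ is equivalent to $\phi(z)=\phi(I(z))$; hence $x$ is an immersion if and only if $\phi$ takes distinct values at the two points of every $I$-orbit $\{z,I(z)\}$ of $\widetilde{M}$. Equivalently, $G=(\phi,\psi)\colon\overline{\widetilde{M}}\to\mathbb{C}P^1\times\mathbb{C}P^1$ is equivariant for the anti-holomorphic involution $\sigma(w_1,w_2)=(\bar w_2,\bar w_1)$, and regularity says that $G$ avoids $\mathrm{Fix}(\sigma)$, the anti-diagonal $A=\{w_1=\bar w_2\}$, over the interior; passing to the quotient, the induced $\bar\Phi\colon M\to\mathrm{Sym}^2\mathbb{C}P^1\setminus\mathcal{C}$ must avoid the discriminant conic $\mathcal{C}$.

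The third step is to produce the contradiction, and this is where the difficulty concentrates. The naive obstruction is useless: the signed coincidence count of $\phi$ and $\phi\circ I$ is $\deg\phi+\deg(\phi\circ I)=d-d=0$, since $I$ reverses orientation, and the resulting $\mathbb{Z}_2$-homotopy class of $\bar\Phi$ is likewise unobstructed. Indeed, in the case $g=0$, $m=0$ a degree-one $\phi$ is injective and therefore automatically satisfies $\phi(z)\ne\phi(I(z))$, and one checks it simply reproduces the Weierstrass data of a minimal M\"obius band in $\mathbb{R}^3$ (here $\psi=-1/\phi$). Thus regularity alone cannot raise the degree, and I must feed in the completeness and period conditions together with the holomorphy of $\phi,\psi$. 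Concretely, I would normalize $\widetilde{M},\phi,\psi,\mathrm{d}h$ using \eqref{eq-nonorientable}, the vanishing of the vertical periods, and the pole order of $x_z\,\mathrm{d}z$ forced by $\widetilde{d}_1=1$, reducing each subcase to an explicit finite-parameter family; then I would rule it out by showing that no member simultaneously meets the horizontal period condition and keeps the symmetric equation $\phi(z)=\phi(I(z))$ solution-free over the interior, exactly in the spirit of the orientable Case~5 and Lemma~\ref{lem-main}.

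The main obstacle is precisely this last step. Because both the signed coincidence number and the $\mathbb{Z}_2$-homotopy obstruction vanish, the impossibility is genuinely not homotopy-theoretic; it rests on the interaction of holomorphy with the period and completeness constraints, and the decisive point, that the coincidence $\phi(z)=\phi(I(z))$ is forced once the periods close up, must be extracted either by the same hand-analysis of $\phi=\bar\psi$ that makes Case~5 delicate or by the equivariant oddness input recorded in Theorem~\ref{thm-odd} of Appendix~B. I expect the good-singular subcase ($m\ge 1$) to be the hardest, since there $\deg\phi$ is larger and the normalization carries more free parameters, so verifying that a forced solution of $\phi=\bar\psi$ always appears is the crux.
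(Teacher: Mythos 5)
Your first step reproduces the paper's: equality in \eqref{eq-jorgemeeks3} together with $r\ge 1$ and $\widetilde{d}_j\ge 1$ (Proposition~\ref{ineq-multiplicity}) forces $r=1$, $\widetilde{d}_1=1$, and a split into the regular and good-singular subcases. After that, however, the proposal stops being a proof: the entire contradiction is deferred to "normalize to an explicit finite-parameter family and rule it out in the spirit of Case~5 and Lemma~\ref{lem-main}", and neither of the two engines you point to can actually run. The parity input of Theorem~\ref{thm-odd} gives nothing here: in your own notation $\deg\phi=g+1+m$, so in the regular case ($m=0$) one has $\deg\phi=g+1\equiv g-1\ (\mathrm{mod}~2)$, i.e.\ the odd-map degree constraint is \emph{satisfied}; it only bites one level up, at total curvature $2\pi(g+2)$, which is exactly where the paper deploys it (Theorem~\ref{thm-least}). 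And the Lemma~\ref{lem-main}-style normalization is unavailable for arbitrary $g$: the extremal surface lives on a genus-$g$ compactification, where meromorphic $\phi,\psi,\mathrm{d}h$ of degree $g+1+m$ admit no universal explicit form like \eqref{eq-case53}, and the period conditions involve $2g$ further homology cycles. Your parenthetical that the $g=0$, $m=0$ data "reproduces the Weierstrass data of a minimal M\"obius band in $\mathbb{R}^3$" is also misleading: no complete immersed minimal M\"obius band of total curvature $2\pi$ exists (Meeks' lower bound is $6\pi$; Henneberg's surface is branched), so showing that this configuration fails is precisely the content to be proved, not an aside.

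The paper's actual argument is of a completely different, softer kind and never touches the equation $\phi=\bar\psi$. Because $M$ has only \emph{one} end and $\widetilde{d}=1$, the principal part of the Laurent expansion of $\tilde{\bf x}_z\,\mathrm{d}z$ at the lifted end is Lorentz-orthogonal to a constant real vector ${\bf v}_0$: in the regular case ${\bf v}_0$ is chosen orthogonal to the span of ${\bf v}_1$, $\mathrm{Re}\,{\bf v}_2$, $\mathrm{Im}\,{\bf v}_2$ (at most three dimensions in $\mathbb{R}^4_1$), and in the good-singular case one takes the null vector ${\bf v}_0=(0,0,1,1)$, which annihilates the $z^{-(m+2)}$, $z^{-2}$ and $z^{-1}$ terms. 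By the symmetry $\tilde{\bf x}\circ I=\tilde{\bf x}$ the same ${\bf v}_0$ works at the conjugate end, so $\langle\tilde{\bf x},{\bf v}_0\rangle$ extends to a harmonic function on the compact surface $\overline{\widetilde{M}}$, hence is constant, and the surface lies in a $3$-dimensional subspace. The contradiction is then geometric: there are no immersed spacelike non-orientable surfaces in $\mathbb{R}^3_1$ or $\mathbb{R}^3_0$, and the $\mathbb{R}^3$ possibility is excluded by Schoen's two-embedded-ends theorem (or a maximum-principle height argument). This flux/degeneration argument works uniformly in $g$ and $m$, which is exactly what your plan — hostage to explicit normalizations and to the hand-analysis of $\phi=\bar\psi$ — cannot deliver; the missing idea is the global harmonic coordinate function $\langle\tilde{\bf x},{\bf v}_0\rangle$ produced by the single end of multiplicity one.
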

\begin{proof}
Consider the lift of ${\bf x}$, i.e.,
$\tilde{\bf x}:\widetilde{M}\to\mathbb{R}^4_1$.
Since the immersion is algebraic and
$-\int_{\widetilde{M}} K<+\infty$,
it has finite many regular or good singular ends,
and the total number is a even number $2r$
($r$ is the number of ends of $M$).
By the modified Jorge-Meeks formula \eqref{eq-jorgemeeks3}
and $r\ge 1$, a Chern-Osserman type inequality is obtained:
\[
-\int_M K\mathrm{d}M\ge 2\pi(g+1).
\]
Suppose the equality is achieved.
Then there must be two ends for $\widetilde{M}$ and $\tilde{d}_1=\tilde{d}_2=1$.
Both of them are regular ends or good singular ends at the same time.
We will show that in either case there will be a contradiction.

\textbf{Case 1: regular end(s)}. The multiplicity $\tilde{d}_1=d_1=1$,
and $\tilde{\bf x}_z\mathrm{d}z$ for the end $p_1$ has a pole of order $2$.
In a local coordinate chart with $z(p_1)=0$ we write out the Laurent expansion of $\tilde{\bf x}_z$:
\[
\tilde{\bf x}_z=\frac{1}{z^2}~{\bf v}_2+\frac{1}{z}~{\bf v}_1+(\text{holomorphic part}).
\]
Since this is a regular end, ${\bf v}_2$ is an isotropic vector whose
real and imaginary parts span a 2-dimensional spacelike subspace.
${\bf v}_1$ is a real vector orthogonal to ${\bf v}_2$ by the period condition
and $<\tilde{\bf x}_z,\tilde{\bf x}_z>=0$.
Thus in $\mathbb{R}^4_1$ there exist a constant non-zero real vector
${\bf v}_0\perp {\bf v}_2,{\bf v}_1$.

At the other end $p_2=I(p_1)$ with local coordinate $w=\bar{z}$,
because $\tilde{\bf x}_w=\tilde{\bf x}_{\bar{z}}$, we know the same ${\bf v}_0$
is orthogonal to the principal part of the Laurent series.
Thus $<\tilde{\bf x}_z\mathrm{d}z,{\bf v}_0>$ is a holomorphic
$1$-form, and $<\tilde{\bf x},{\bf v}_0>$ is a harmonic function defined on the
whole compact Riemann surface. It must be a constant; hence $\tilde{\bf x}$
as well as ${\bf x}$ is contained in a 3-dimensional subspace of $\mathbb{R}^4_1$.

Yet this is impossible. Since in $\mathbb{R}^3_1$ or $\mathbb{R}^3_0$
there exist no immersed spacelike non-oriented surfaces. The possibility of
$M\subset\mathbb{R}^3$ could be ruled out by Schoen's famous result \cite{Schoen}
that any complete, connected, oriented minimal surface in $\mathbb{R}^3$
with two embedded ends is congruent to the catenoid.
(Alternatively, we may argue by the maximal principle once again.
Since the unique end of $M$ is an embedded end in $\mathbb{R}^3$,
which is either a catenoid end or a planar end,
one can choose the coordinate of $\mathbb{R}^3$ suitably so that the height
function ${\bf x}_3$ is bounded from below over the whole $M$.
Such a harmonic function must be a constant, and $M\subset \mathbb{R}^2$.
Contradiction.)

\textbf{Case 2: good singular end(s)}.
At the good singular end $p_1$, without loss of generality,
suppose it has $\mathrm{ind}=m\ge 1$ and $\phi(p_1)=\psi(p_1)=0$.
Then $\tilde{d}_1=d_1-m=1$,
and $\tilde{\bf x}_z\mathrm{d}z$ has a pole of order $m+2$ at $p_1$.
There always exists a suitable local coordinate $z$ such that $z(p_1)=0$ and
\[
\mathrm{d}h=\frac{\mathrm{d}z}{z^{m+2}},~~
\phi(z)=a_0 z^m + a_1 z^{m+1} + O(z^{m+2}),~~
\psi(z)=b_1 z^{m+1} + O(z^{m+2}).
\]
By \eqref{x} we know
\begin{eqnarray*}
\tilde{\bf x}_z \mathrm{d}z &=&
\Big(\phi+\psi, -\mathrm{i}(\phi-\psi),1-\phi\psi,1+\phi\psi\Big)\mathrm{d}h\\
&=&\frac{\mathrm{d}z}{z^{m+2}}~\begin{pmatrix}0 \\0 \\1 \\1\end{pmatrix}
+\frac{\mathrm{d}z}{z^2}~\begin{pmatrix}a_0 \\-\mathrm{i} a_0 \\0 \\0\end{pmatrix}
+\frac{\mathrm{d}z}{z}~\begin{pmatrix}a_1+b_1 \\-\mathrm{i} (a_1-b_1) \\0 \\0\end{pmatrix}
+(\text{holomorphic part}).
\end{eqnarray*}
Take ${\bf v}_0=(0,0,1,1)$. We can argue as in case 1 to show that
$<\tilde{\bf x},{\bf v}_0>$ is a harmonic function defined on the
whole compact Riemann surface, hence be a constant. (The key point is that $M$ has only one end.)
Thus $\tilde{\bf x}(\widetilde{M})$ as well as $x(M)$ is contained
in an affine space $\mathbb{R}^3_0$ (orthogonal to ${\bf v}_0$).
Yet this is also impossible for a non-oriented spacelike surface.
\qed\end{proof}

We will show that the lower bound could be improved to be $2\pi(g+3)$, the same as the case for non-orientable minimal surfaces in $\mathbb{R}^3$.

\begin{theorem}\label{thm-least}
Given a non-orientable surface $M$ whose double covering space \
$\widetilde{M}$ has genus $g$ and finite many punctures,
there does not exist complete algebraic stationary immersion ${\bf x}:M\to\mathbb{R}^4_1$
with total Gaussian curvature $-\int_M K\mathrm{d}M=2\pi(g+2)$.
In other words, under our assumptions there must be
\begin{equation}\label{eq-lowerbound2}
-\int_M K\mathrm{d}M\ge 2\pi(g+3).
\end{equation}
\end{theorem}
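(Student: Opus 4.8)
The plan is to push the argument of Proposition~\ref{prop-least} one step further. First I would feed the hypothesis $-\int_M K\,\mathrm{d}M=2\pi(g+2)$ into the modified Jorge--Meeks formula \eqref{eq-jorgemeeks3}, which gives $r-1+\sum_j\widetilde d_j=2$, hence $r+\sum_j\widetilde d_j=3$. Since $r\ge1$ and every $\widetilde d_j\ge1$ by Proposition~\ref{ineq-multiplicity}, the only solution is $r=1,\ \widetilde d_1=2$: the surface $M$ has a single end, whose lift consists of two ends $p_1,p_2=I(p_1)$ of $\widetilde M$, each of multiplicity $2$. Writing $d_1+1$ for the pole order of $\tilde{\bf x}_z\,\mathrm{d}z$ at $p_1$, this end is either regular ($\mathrm{ind}=0$, $d_1=2$, pole order $3$, $\deg\phi=\deg\psi=g+2$) or a good singular end of index $m\ge1$ ($d_1=m+2$, pole order $m+3$, $\deg\phi=\deg\psi=g+2+m$).

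The singular ends of index $m\ge2$ I would dispose of exactly as in Case~2 of Proposition~\ref{prop-least}. After normalizing $\phi(p_1)=\psi(p_1)=0$ by a transformation \eqref{trans} and choosing the coordinate so that $\mathrm{d}h=z^{-(m+3)}\mathrm{d}z$, one has $\mathrm{ord}_{p_1}\phi=m$ and $\mathrm{ord}_{p_1}\psi\ge m+1$, so that $\langle\tilde{\bf x}_z,(0,0,1,1)\rangle\,\mathrm{d}z=-2\phi\psi\,\mathrm{d}h$ vanishes to order $\ge m-2\ge0$ and is holomorphic at $p_1$; by the conjugate symmetry \eqref{eq-nonorientable} it is holomorphic at $p_2$ as well. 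Then $\langle\tilde{\bf x},(0,0,1,1)\rangle$ is harmonic on the closed surface $\widetilde M$, hence constant, and the image lies in an $\mathbb R^3_0$, which is impossible for a spacelike non-orientable surface. The point to stress is that this trick breaks down precisely when $m=1$ (the form $\phi\psi\,\mathrm{d}h$ then retains a non-removable simple pole whose residue is real by \eqref{eq-period2}, producing a logarithmic term, so $\langle\tilde{\bf x},(0,0,1,1)\rangle$ is unbounded and cannot be constant) and in the regular case (where the combination $\langle\tilde{\bf x}_z,{\bf v}_0\rangle$ must vanish to third order at the end, i.e. $6$ real conditions on the $4$ real entries of ${\bf v}_0$, with only the trivial solution). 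These two configurations are the genuine content of the theorem.

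For the regular end and the index-one singular end I would argue by the regularity condition rather than by a hyperplane. Using that $\widetilde M\to\mathbb R^4_1$ is algebraic with the single pair of ends $p_1,p_2$ exchanged by a fixed-point-free antiholomorphic involution $I$, I would first reduce $\{\phi,\psi,\mathrm dh\}$ to a normal form compatible with \eqref{eq-nonorientable} --- the same mechanism that produced Example~\ref{exa-meeks} in Proposition~\ref{prop-olivaira}, now with $\mathrm dh$ having a third-order (resp.\ $(m{+}3)$-order) pole --- and then impose the horizontal and vertical period conditions \eqref{eq-period1}\eqref{eq-period2} to cut the free parameters down to a finite family. On this family the remaining task is to show that the immersion condition fails, i.e.\ that the mixed equation $\phi(z)=\overline{\psi(z)}$ of \eqref{eq-singular} always has a solution off the ends; I would attack this by the cross-ratio/circle-geometry computation used in the proof of Proposition~\ref{prop-olivaira}, reducing $\phi=\bar\psi$ to the statement that four explicit points lie on a common circle, and by the computational Lemma~\ref{lem-main} of Appendix~A where the algebra is heavier. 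For the part of the statement that must hold for every genus $g$, where no global coordinate is available, I expect to need the parity/topological input of Theorem~\ref{thm-odd} in Appendix~B to convert the degree data $\deg\phi=\deg\psi=g+2$ (resp.\ $g+3$) together with the fixed-point-freeness of $I$ into the desired obstruction.

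The hard part is exactly the regular and index-one cases: the null-vector trick that settles every other configuration is powerless there, so one is forced to confront directly the mixed holomorphic/antiholomorphic equation $\phi=\bar\psi$ --- the central difficulty emphasized in the introduction. Everything upstream (the reduction to $r=1,\widetilde d_1=2$ and the elimination of the high-index singular ends) is routine; the weight of the proof sits in showing that the normalized, period-closed Weierstrass data in these two remaining cases can never be globally regular.
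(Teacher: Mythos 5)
Your reduction to $r=1$, $\widetilde d_1=2$ (two ends $p_1,p_2=I(p_1)$ of $\widetilde M$ with $\widetilde d_j=2$) matches the paper, and for the regular-end case your fallback --- homotope $\phi$ to an odd map and invoke Theorem~\ref{thm-odd} against $\deg\phi=g+2\not\equiv g-1 \pmod 2$ --- is exactly the paper's argument; the homotopy $H(p,t)=\bigl(\phi(p)-t\phi(I(p))\bigr)/\bigl|\phi(p)-t\phi(I(p))\bigr|$ is legitimate because regularity $\phi\ne\bar\psi$ together with $\phi\circ I=\bar\psi$ gives $\phi(p)\ne\phi(I(p))$ on all of the compactified $\widetilde M$ (this is where regularity of the ends enters, and why this argument is reserved for that case). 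The genuine gap is your claim that the null-vector trick ``breaks down precisely when $m=1$.'' It does not, and the paper uses it for \emph{all} $m\ge1$. When $m=1$ the form $\phi\psi\,\mathrm{d}h$ has a simple pole at $p_1$ whose residue $c$ is real by the vertical period condition \eqref{eq-period2} around the puncture, so $h=\langle\tilde{\bf x},{\bf v}_0\rangle$ behaves like $c'\ln|z|+O(1)$ with $c'\in\mathbb{R}$ proportional to $c$: unbounded if $c\ne0$, yes, but \emph{bounded from one side}. Since $\tilde{\bf x}\circ I=\tilde{\bf x}$ (equivalently $I^*(\phi\psi\,\mathrm{d}h)=\overline{\phi\psi\,\mathrm{d}h}$, and this is where ``$M$ has only one end'' is used), the residue at $p_2=I(p_1)$ equals $c$ as well; on the other hand $\phi\psi\,\mathrm{d}h$ is holomorphic on $\widetilde M$ away from the two ends (regularity condition 2) plus condition 1 kill all interior poles), so the two residues sum to zero by the residue theorem. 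Hence $c=0$, $h$ is bounded, extends harmonically to the compact surface, is constant, and the surface lies in a degenerate $3$-space --- the same contradiction you drew for $m\ge2$.

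Because you dismissed this, you route the $m=1$ case (and, as primary route, the regular case) through an unexecuted program: normalize the Weierstrass data, impose the period conditions, and show the mixed equation $\phi=\bar\psi$ of \eqref{eq-singular} is always solvable. This cannot be completed as stated for arbitrary genus $g$: there is no global coordinate on $\widetilde M$, Lemma~\ref{lem-main} concerns two specific rational functions on $\mathbb{C}\setminus\{0\}$ with the constraint $a+b=-\bar\rho/\rho$ and says nothing about data on a genus-$g$ surface, and the cross-ratio computation of Proposition~\ref{prop-olivaira} is tied to the explicit form of Example~\ref{exa-meeks}. So your plan leaves the index-one singular case genuinely open precisely where it matters; the fix is the one-sided-boundedness refinement above, which disposes of all singular ends uniformly and leaves Theorem~\ref{thm-odd} to handle the regular ends, as in the paper.
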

\begin{proof}
As in the proof to Proposition~\ref{prop-least}, consider the lift of ${\bf x}$, i.e.,
$\tilde{\bf x}:\widetilde{M}\to\mathbb{R}^4_1$.
Suppose the lower bound $2\pi(g+2)$ is attained.
Then $\widetilde{M}$ has two ends $p_1,p_2$ with
 $\tilde{d}_1=\tilde{d}_2=2$ by \eqref{eq-jorgemeeks3}.
By symmetry, both of them are regular or good singular ends at the same time. Each possibility is ruled out using different arguments.

When both ends are good singular ends, we use the same argument as in Case~2 of Proposition~\ref{prop-least}.
At the good singular end $p_1$, without loss of generality,
suppose it has $\mathrm{ind}=m\ge 1$ and $\phi(p_1)=\psi(p_1)=0$.
Then $\tilde{d}_1=d_1-m=2$,
and $\tilde{\bf x}_z\mathrm{d}z$ has a pole of order $m+3$ at $p_1$.
There always exists a suitable local coordinate $z$ such that $z(p_1)=0$ and
\[
\mathrm{d}h=\frac{\mathrm{d}z}{z^{m+3}},~~
\phi(z)=a_0 z^m + a_1 z^{m+1} + O(z^{m+2}),~~
\psi(z)=b_1 z^{m+1} + O(z^{m+2}).
\]
By the Weierstrass representation formula we know
\begin{eqnarray*}
\tilde{\bf x}_z
&=&\frac{1}{z^{m+3}}~\begin{pmatrix}0 \\0 \\1 \\1\end{pmatrix}
+\frac{1}{z^3}~\begin{pmatrix}a_0 \\-\mathrm{i} a_0 \\0 \\0\end{pmatrix}
+\frac{1}{z^2}~\begin{pmatrix}a_1+b_1 \\-\mathrm{i} (a_1-b_1) \\0 \\0\end{pmatrix}
+\frac{1}{z}~\begin{pmatrix}a_2+b_2 \\-\mathrm{i} (a_2-b_2) \\a_0 b_1 z^{m-1} \\-a_0 b_1z^{m-1}\end{pmatrix}\\
&& +(\text{holomorphic part}).
\end{eqnarray*}
Take ${\bf v}_0=(0,0,1,1)$. Because $m\ge 1$,
$<\tilde{\bf x},{\bf v}_0>$ is a harmonic function (with the leading term $\ln|z|$) bounded from
below or above in a neighborhood of $p_1$.
Since $M$ has only one end around which the assertion above is
still valid, we conclude that $<\tilde{\bf x},{\bf v}_0>$ is
a harmonic function bounded from
below or above over the whole compactified surface.
It must be a constant, and the surface is contained in a 3-space. As in Proposition~\ref{prop-least} this leads to a contradiction.

In case that both ends are regular, consider the anti-holomorphic automorphism $I:\widetilde{M}\to \widetilde{M}$ without fixed points and $M\cong \widetilde{M}/\{I\}$
Under the assumptions above, the Gauss map $\phi$
could be viewed as a continuous map from the oriented double
 covering space to the round 2-sphere such that
 \[
 \phi:\{\widetilde{M};I\}\to S^2\subset \mathbb{R}^3,~~~\text{s.t.},~\phi(p)\ne \phi(I(p)).
 \]
 It is a standard fact that such a map is homotopic to an odd map
 \[
 \tilde\phi\triangleq \frac{\phi(p)-\phi(I(p))}{|\phi(p)-\phi(I(p))|}, ~~~\tilde\phi(I(p))=-\tilde\phi(p),
 \]
where we give the homotopy $H(p,t)= \frac{\phi(p)-t\phi(I(p))}{|\phi(p)-t\phi(I(p))|}$ directly.
According to Theorem~\ref{thm-odd} in Appendix~B,
the mapping degree of such an odd map and $g-1$ must be
both even or both odd. Thus the mapping degree could not
be $g+2$. This finishes the proof.
\qed\end{proof}

\begin{conjecture}\label{conj}
The lower bound \eqref{eq-lowerbound2}
$-\int_M K\mathrm{d}M\ge 2\pi(g+3)$
is sharp for any given $g\ge 0$. In other words, there always exists an complete, immersed, algebraic non-orientable stationary surfaces whose double covering surface has genus $g$.
\end{conjecture}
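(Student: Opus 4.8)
The plan is to establish Conjecture~\ref{conj} by explicit construction, producing for each $g\ge 0$ a complete immersed algebraic non-orientable stationary surface whose double cover $\widetilde{M}$ has genus $g$ and which attains the bound \eqref{eq-lowerbound2}. First I would pin down the admissible combinatorics: setting $-\int_M K=2\pi(g+3)$ in \eqref{eq-jorgemeeks3} forces $r-1+\sum_{j=1}^r\widetilde{d}_j=3$, i.e. $r+\sum\widetilde{d}_j=4$, so either $M$ has a single end with $\widetilde{d}=3$ (two ends on $\widetilde{M}$, as in Example~\ref{exa-meeks}) or two ends with $\widetilde{d}_1=\widetilde{d}_2=1$. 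Aiming for \emph{regular} ends gives $\sum|\mathrm{ind}_{p_j}|=0$ and hence $\deg\phi=g+3$ by \eqref{eq-jorgemeeks3}. A crucial sanity check is that this target is \emph{not} excluded by the parity obstruction used in Theorem~\ref{thm-least}: there the regular case was killed because the degree $g+2\not\equiv g-1\pmod 2$, whereas here $g+3\equiv g-1\pmod 2$, so Theorem~\ref{thm-odd} permits an odd map of the required degree. This is the structural reason to expect the bound to be sharp, and it singles out the Meeks-type single regular end (realized at $g=0$ by Example~\ref{exa-meeks} with $m=1$) as the case to chase.

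Concretely, I would generalize Example~\ref{exa-meeks}. By Theorem~\ref{thm-nonorientable} it suffices to exhibit a genus-$g$ Riemann surface $\widetilde{M}$ with a fixed-point-free anti-holomorphic involution $I$ together with a single meromorphic function $\phi$; then $\psi:=\overline{\phi\circ I}$ and the symmetric height differential $\mathrm{d}h$ (forced by $I^*\mathrm{d}h=\overline{\mathrm{d}h}$) are determined, and \eqref{eq-nonorientable} holds automatically. The model $\widetilde{M}=\mathbb{C}\backslash\{0\}$, $I:z\mapsto -1/\bar z$ (the antipodal map on $S^2$) should be replaced by a hyperelliptic curve carrying an involution covering $z\mapsto -1/\bar z$, so that $I$ stays fixed-point free and the quotient is the non-orientable surface with $g+1$ cross-caps. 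I would then take $\phi$ of the minimal degree compatible with $\deg\phi=g+3$, place its zeros and poles on $I$-orbits prescribing the two ends of $\widetilde{M}$, and reduce the horizontal and vertical period conditions \eqref{eq-period1}\eqref{eq-period2} to a finite algebraic system in the remaining parameters, solving it by the residue and symmetry bookkeeping already used in Proposition~\ref{prop-olivaira}.

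The main obstacle, exactly as the authors flag for the orientable classification, is the regularity condition $\phi\ne\bar\psi$, i.e. ruling out solutions of the mixed holomorphic/anti-holomorphic equation \eqref{eq-singular} off the designated ends. With $\psi=\overline{\phi\circ I}$ this reads $\phi(p)\ne\phi(I(p))$, the requirement that $\phi$ separate the two sheets of the cover. I would attack it by the geometric method of Proposition~\ref{prop-olivaira}: rewrite $\phi(z)=\overline{\psi(z)}$ as a real cross-ratio identity forcing the putative solutions onto a circle determined by $I$, then use the antipodal structure of $I$ to show that circle cannot meet the locus where equality could occur. The difficulty is that for $g\ge 1$ this circle argument must be run simultaneously with the period constraints on a curve of positive genus, where the parameter count is tighter and explicit cross-ratios are unavailable; this is precisely why the analogous statement remains open for minimal surfaces in $\mathbb{R}^3$ (cf. the remark after Theorem~B and \cite{Martin}).

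As a fallback to the direct construction, I would pursue a deformation and induction scheme: start from a degenerate configuration (a noded genus-$(g-1)$ example with an attached handle, or a limit in which a pair of $I$-conjugate ends pinches) and open the node, using an implicit-function-theorem argument to solve the period equations and to propagate the \emph{open} regularity condition $\phi\ne\bar\psi$ to nearby honest genus-$g$ data. Each added handle raises the genus of the double cover by one and the total curvature by $2\pi$, matching the $2\pi(g+3)$ bookkeeping, so success at $g=0$ would bootstrap to all $g$. I expect the analytic control of regularity through the degeneration—keeping $\phi(p)\ne\phi(I(p))$ uniformly as the neck closes—to be the genuinely hard step, and the one most likely to require a new idea beyond the handwork the authors describe.
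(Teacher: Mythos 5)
You should first be clear about the status of the statement you are addressing: it is posed in the paper as a \emph{conjecture}, explicitly left open (it generalizes Martin's conjecture for non-orientable minimal surfaces in $\mathbb{R}^3$, which the paper notes is unresolved even in that special case for $g\ge 2$), so there is no proof in the paper to compare against. Your text is accordingly not a proof but a research program, and you say so yourself. The parts of it that can be checked are sound and consistent with the paper's framework: setting $-\int_M K=2\pi(g+3)$ in \eqref{eq-jorgemeeks3} does force $r+\sum\widetilde{d}_j=4$; in the all-regular-ends case it forces $\deg\phi=g+3$, which passes the parity obstruction of Theorem~\ref{thm-odd} since $g+3\equiv g-1\pmod 2$ (this is exactly why Theorem~\ref{thm-least} kills $g+2$ but cannot kill $g+3$); and the case $g=0$ is indeed realized by Example~\ref{exa-meeks} with $m=1$, while $g=1$ follows from L\'opez's Klein bottle viewed inside $\mathbb{R}^4_1$. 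So your structural analysis correctly identifies why the bound is expected to be sharp and which topological type to aim for.

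The genuine gap is that for $g\ge 2$ nothing in your outline is actually established. You never exhibit a genus-$g$ curve with fixed-point-free anti-holomorphic involution $I$ carrying a degree-$(g+3)$ function $\phi$ whose divisor, together with $\psi=\overline{\phi\circ I}$ and a compatible $\mathrm{d}h$ (whose zeros must match the poles of $\phi$ or $\psi$ with the same order), satisfies the period conditions \eqref{eq-period1}--\eqref{eq-period2}; on a positive-genus curve these are $2g$ additional complex period constraints with no residue shortcut, and you give no parameter count showing the system is solvable. More seriously, the regularity condition $\phi\ne\bar\psi$ --- the mixed equation \eqref{eq-singular}, which the paper flags as the central obstruction throughout --- is not addressed by any transferable method: the cross-ratio/circle argument of Proposition~\ref{prop-olivaira} uses in an essential way that $\phi$ is a rational function of a global coordinate on $\mathbb{C}\backslash\{0\}$ and that $I(z)=-1/\bar z$ is the antipodal map; no analogue of this argument exists on a hyperelliptic curve, and ``forcing the putative solutions onto a circle determined by $I$'' has no meaning there. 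Your fallback deformation scheme has the same defect at the analytic level: opening a node requires proving the period map is submersive at the degenerate configuration, and the open condition $\phi(p)\ne\phi(I(p))$ is open only in the compact-uniform sense --- precisely on the pinching neck and at the ends, where the geometry degenerates, it can fail in the limit, and you offer no estimate controlling it there. In short: a correct and well-motivated plan whose two decisive steps (the period problem in positive genus and the global exclusion of $\phi=\bar\psi$) are exactly the open difficulties, so the conjecture remains unproved by this proposal.
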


This is a generalization of conjecture~1 in \cite{Martin} for non-orientable minimal surfaces in $\mathbb{R}^3$.
It is verified in $\mathbb{R}^3$ when $g=0$ and $g=1$. The corresponding examples are the Meeks' M\"obius strip \cite{Meeks} and Lopez's Klein bottle \cite{Lopez}.
For higher genus $g$ this conjecture is still open.

As the direct consequence of Theorem~\ref{thm-least}
we obtain the following result:

\begin{theorem}\label{thm-nonorientable4pi}
There does not exist a complete, algebraic, immersed
non-orientable stationary surface in $\mathbb{R}^4_1$
with total Gaussian curvature $-\int_M K\mathrm{d}M=4\pi$.
\end{theorem}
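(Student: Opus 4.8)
The plan is to obtain the statement as an immediate evaluation of the sharp lower bound established in Theorem~\ref{thm-least}. That theorem asserts that for any complete algebraic non-orientable stationary immersion ${\bf x}:M\to\mathbb{R}^4_1$ whose oriented double cover $\widetilde{M}$ has genus $g$, the total curvature obeys $-\int_M K\mathrm{d}M\ge 2\pi(g+3)$. Since the genus of a closed orientable surface is a non-negative integer, we have $g\ge 0$, and hence $-\int_M K\mathrm{d}M\ge 2\pi(0+3)=6\pi$ for every surface in this class.

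First I would invoke Theorem~\ref{thm-least} to produce the inequality $-\int_M K\mathrm{d}M\ge 2\pi(g+3)$. Next I would substitute $g\ge 0$ to conclude $-\int_M K\mathrm{d}M\ge 6\pi$. Because $6\pi>4\pi$, no complete algebraic immersed non-orientable stationary surface can realize the value $-\int_M K\mathrm{d}M=4\pi$, which is exactly the asserted non-existence. There is essentially no obstacle remaining at this stage: all of the genuine difficulty has already been absorbed into Theorem~\ref{thm-least} and its predecessor Proposition~\ref{prop-least}, whose proofs exclude the two borderline values $2\pi(g+1)$ and $2\pi(g+2)$ by the maximum-principle argument on the coordinate $\langle\tilde{\bf x},{\bf v}_0\rangle$ together with the parity statement on odd maps in Theorem~\ref{thm-odd}. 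The present theorem is only the reading off of that bound at its minimizing genus $g=0$.

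As a consistency check I would observe that the bound is sharp exactly one quantized step above $4\pi$: the Meeks-type Möbius strip of Example~\ref{exa-meeks} with $m=1$ has an oriented double cover of genus $g=0$ and total curvature $6\pi=2\pi(g+3)$ (cf.\ Proposition~\ref{prop-olivaira}). This confirms that $6\pi$ is attained, so that the value $4\pi$ is genuinely forbidden in the non-orientable setting rather than merely missed by the known constructions; it also stands in contrast with the orientable case, where $4\pi$ is the minimal nontrivial total curvature and is realized by the generalized catenoids and Enneper surfaces.
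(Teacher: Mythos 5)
Your proposal is correct and takes essentially the same route as the paper, which states Theorem~\ref{thm-nonorientable4pi} as a direct consequence of Theorem~\ref{thm-least}: the bound $-\int_M K\,\mathrm{d}M\ge 2\pi(g+3)$ evaluated at $g\ge 0$ gives $6\pi>4\pi$, forbidding the value $4\pi$. Your sharpness check via Example~\ref{exa-meeks} with $m=1$ (total curvature $6\pi$, matching Proposition~\ref{prop-olivaira}) is a sound consistency observation, though not needed for the proof itself.
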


Combined with Theorem~\ref{thm-4pi1}, this
 finishes the proof to our classification theorem (Theorem A in the Introduction).

\begin{remark}\label{rem-oliveira}
We note a significant difference between non-orientable stationary surfaces in $\mathbb{R}^4_1$ and $\mathbb{R}^4$.
Oliveira \cite{Oliveira} constructed complete M\"obius band in $\mathbb{R}^4$ with total curvature $2\pi m$ for any $m\ge 2$.
So the total curvature $4\pi$ could be realized in that case.
\end{remark}

In the proof to Theorem~\ref{thm-least}, when treating the special case with only regular ends, indeed we have obtained the following proposition,
which is a partial generalization of Meeks' result (Corollary~1 in \cite{Meeks}):
\begin{proposition}
A complete non-orientable stationary surface in $\mathbb{R}^4_1$
of algebraic type without singular ends must have total curvature $-\int_M K\mathrm{d}M=2\pi m$, where $m\equiv g-1(\mathrm{mod}~2)$, and $g$ is the genus of the oriented double covering surface.
\end{proposition}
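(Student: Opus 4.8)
The plan is to read this proposition as the unconditional counterpart of the ``both ends regular'' analysis already carried out inside the proof of Theorem~\ref{thm-least}. There, that computation was used only to exclude one prescribed value of the total curvature; here I extract from it the full congruence it actually produces. So the first step is to reduce the claim to a purely topological statement about the Gauss map. Since the immersion is algebraic and has \emph{no} singular ends, every end is regular and hence $\mathrm{ind}_{p_j}=0$ for all $j$. The modified Jorge-Meeks formula \eqref{eq-jorgemeeks3} then collapses to
\[
-\int_M K\,\mathrm{d}M = 2\pi\deg\phi,
\]
so that, writing $m=\deg\phi$, the total curvature is automatically $2\pi m$ and it remains only to prove the parity statement $\deg\phi\equiv g-1\pmod 2$.

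Next I would realize $\phi$ as an equivariant map on the compactified double cover. Because the surface is of algebraic type, $\phi$ extends to a holomorphic map $\phi:\overline{\widetilde M}\to\mathbb{C}P^1\cong S^2$ on the closed genus-$g$ surface obtained by filling in the punctures, and its topological degree as a map between oriented surfaces equals the meromorphic degree $\deg\phi$. The fixed-point-free anti-holomorphic involution $I$ likewise extends across the paired regular ends, making $\overline{\widetilde M}$ an $I$-space. The decisive input is the regularity condition $\phi\neq\bar\psi$ together with the identity $\phi\circ I=\bar\psi$ from \eqref{eq-nonorientable}: combined, they yield $\phi(p)\neq\phi(I(p))$ for every $p$. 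Since all ends are regular, this non-coincidence is valid at the punctures as well, hence on the \emph{whole} compact surface $\overline{\widetilde M}$.

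With $\phi(p)$ and $\phi(I(p))$ now distinct points of $S^2\subset\mathbb{R}^3$ everywhere, I would homotope $\phi$ to an odd map and apply the topological theorem. The normalized difference
\[
\tilde\phi(p)=\frac{\phi(p)-\phi(I(p))}{|\phi(p)-\phi(I(p))|}
\]
is a well-defined $I$-antipodal map, $\tilde\phi(I(p))=-\tilde\phi(p)$, and the straight-line homotopy $H(p,t)=\frac{\phi(p)-t\phi(I(p))}{|\phi(p)-t\phi(I(p))|}$ joins $\phi$ to $\tilde\phi$: its denominator never vanishes, since for $t<1$ the triangle inequality gives $|\phi(p)-t\phi(I(p))|\ge 1-t>0$, and at $t=1$ non-vanishing is exactly the regularity inequality. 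Hence $\deg\phi=\deg\tilde\phi$, and Theorem~\ref{thm-odd} in Appendix~B forces the degree of such an odd map to be congruent to $g-1$ modulo $2$. This gives $\deg\phi\equiv g-1\pmod 2$ and completes the argument.

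The genuinely substantive topological content is carried by Theorem~\ref{thm-odd}, which I may invoke as already established, so the delicate point is really the \emph{bridge}: verifying that $\phi(p)\neq\phi(I(p))$ holds at every point of the compactified surface, including the regular ends where $\phi$ or $\psi$ may equal $\infty$. I expect this compactness-and-regularity bookkeeping — ensuring $\phi$ is a bona fide continuous degree-$\deg\phi$ map into $S^2$ and that the equivariant homotopy stays valid globally — to be the only step requiring care.
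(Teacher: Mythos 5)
Your proposal is correct and follows essentially the same route as the paper: the paper derives this proposition directly from the regular-ends case in the proof of Theorem~\ref{thm-least}, namely the reduction $-\int_M K\,\mathrm{d}M=2\pi\deg\phi$ via \eqref{eq-jorgemeeks3}, the observation $\phi(p)\neq\phi(I(p))$ from regularity and \eqref{eq-nonorientable}, the straight-line homotopy $H(p,t)$ to the odd map $\tilde\phi$, and Theorem~\ref{thm-odd}. Your added bookkeeping (extending $\phi$ and $I$ across the punctures and checking the non-coincidence at the regular ends) is exactly the implicit content of the paper's argument, carefully made explicit.
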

So far we do not know whether it is true in the general case
when good singular ends exist.

\section{Non-algebraic examples with small total Gaussian curvature}
Recall the following classical result.
\begin{theorem} Let $(M,\mathrm{d}s^2)$ be a non-compact surface with a complete metric. Suppose $\int_M|K|\mathrm{d}M<+\infty$, then:

(1) (Huber\cite{Huber}) There is a compact Riemann surface
$\overline{M}$ such that $M$ as a Riemann surface is biholomorphic to $\overline{M}\backslash\{p_1,p_2,\cdots,p_r\}$.

(2) (Osserman\cite{Osser}) When this is a minimal surface in $\mathbb{R}^3$ with the induced metric $\mathrm{d}s^2$, the Gauss map $G=\phi=-1/\psi$ and the
height differential $\mathrm{d}h$ extend to each end $p_j$ analytically.

(3) (Jorge and Meeks \cite{Jor-Meeks}) As in (1) and (2), suppose minimal surface $M\to \mathbb{R}^3$ has $r$ ends and $\overline{M}$ is the compactification with genus $g$. The total curvature is related with these topological invariants via the Jorge-Meeks formula:
\begin{equation}\label{eq-jorgemeeks}
\int_M
K\mathrm{d}M=2\pi\left(2-2g-r-\sum_{j=1}^r d_j\right)~,
\end{equation}
Here $d_j+1$ equals to the highest order of the pole of ${\bf x}_z \mathrm{d}z$ at $p_j$,
and $d_j$ is called \emph{the multiplicity at the end $p_j$}.
\end{theorem}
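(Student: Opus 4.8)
The plan is to establish the three parts in the stated order, since (2) presupposes the conformal compactification of (1), and (3) presupposes the meromorphic extension of (2). The genuinely global input is Huber's theorem (1); once it is in hand, parts (2) and (3) reduce to a removable-singularity argument and to the Gauss--Bonnet bookkeeping already carried out above. For part (1), I would first use the hypothesis $\int_M|K|\,\mathrm{d}M<\infty$ together with completeness to deduce that $M$ has finite topology, via a Cohn--Vossen-type estimate controlling the number of ends. The analytic heart is then to pin down the conformal type of each end: following Huber's potential-theoretic method, one shows that in isothermal coordinates the integrability of $K$ controls the conformal factor $\mathrm{e}^{2\omega}$ strongly enough that each end is conformally a punctured disk $\{0<|z|<1\}$, so that filling in the finitely many punctures produces a compact Riemann surface $\overline{M}$ with $M\cong\overline{M}\setminus\{p_1,\dots,p_r\}$.

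For part (2), I would exploit the special feature of minimal surfaces in $\mathbb{R}^3$ that the Gauss map $G=\phi=-1/\psi$ is (anti)meromorphic and that $-K\,\mathrm{d}M=G^{*}(\mathrm{d}A_{S^2})$, so that
\[
-\int_M K\,\mathrm{d}M=\int_M G^{*}(\mathrm{d}A_{S^2})
\]
is precisely the spherical area of the image of $G$ counted with multiplicity. Granting part (1), near each puncture $p_j$ the map $G$ is holomorphic on a punctured disk and this pulled-back area is finite; an essential singularity is then excluded because, by the Casorati--Weierstrass/Picard phenomenon, $G$ would sweep out almost all of $S^2$ infinitely often and force infinite area. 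Hence $G$ extends meromorphically across each $p_j$. Once $G$ extends, the completeness of $\mathrm{d}s^2$ and the regularity conditions 1),2) of Theorem~\ref{thm-period} pin down the zero/pole orders of $\mathrm{d}h$ at the punctures and force $\mathrm{d}h$ to extend meromorphically as well.

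For part (3), the most economical route is to observe that a minimal surface in $\mathbb{R}^3$ is exactly the special case $\phi\equiv-1/\psi$ of the theory developed above (Remark~\ref{rem-Wrepre}). Since $-1/\psi$ can never equal $\bar\psi$, such a surface has no singular ends; every end is regular, so $\mathrm{ind}^{+}_{p_j}=0$ and $\widetilde{d}_j=d_j$ for all $j$, and the generalized Jorge--Meeks formula \eqref{eq-jorgemeeks2} of Theorem~\ref{GB3} specializes at once to \eqref{eq-jorgemeeks}. Alternatively, and in the classical spirit, I would apply Gauss--Bonnet to the truncated surfaces $M_{\varepsilon}=\overline{M}\setminus\bigcup_j\{|z-p_j|<\varepsilon\}$, using $\chi(M_{\varepsilon})=2-2g-r$, and analyze the boundary term: near an end of multiplicity $d_j$ the geodesic curvature integral of the shrinking boundary circle tends to a multiple of $d_j$, with $\int_{\partial M_{\varepsilon}}k_g\,\mathrm{d}s\to 2\pi\sum_j d_j$ in the limit, whence the formula follows.

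The step I expect to be the main obstacle is part (1): Huber's theorem is the only genuinely global, analytically delicate ingredient, and its crux is establishing that each end is conformally a punctured disk (and that there are only finitely many ends). By contrast, part (2) is a removable-singularity statement and part (3) is a Gauss--Bonnet computation that we have in fact already performed in full generality, so neither presents a comparable difficulty.
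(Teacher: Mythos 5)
There is nothing internal to compare against here: the paper states this theorem purely as recalled background, with attributions to Huber, Osserman, and Jorge--Meeks, and supplies no proof of any of the three parts. Your proposal is therefore measured against the cited literature, and at that level it is a faithful reconstruction: part (1) follows Huber's scheme (finite total curvature plus completeness gives finitely many ends, and the potential-theoretic estimate on the conformal factor makes each end a conformal punctured disk); part (2) is Osserman's removable-singularity argument via the spherical area $-\int_M K\,\mathrm{d}M=\int_M G^{*}(\mathrm{d}A_{S^2})$; part (3) is the standard Gauss--Bonnet truncation with $\int_{\partial M_{\varepsilon}}k_g\,\mathrm{d}s\to 2\pi\sum_j d_j$. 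Your first route for (3) --- specializing the paper's own Theorem~\ref{GB3} using $\phi\equiv-1/\psi$, so that $\phi=\bar\psi$ would force $|\psi|^2=-1$, hence no singular ends, $\mathrm{ind}^{+}_{p_j}=0$ and $\widetilde{d}_j=d_j$ --- is logically legitimate inside this paper (Theorem~\ref{GB3} is quoted with proof from \cite{Ma}), though it inverts the historical order, since the generalized formula was modeled on the classical one; the paper itself makes exactly this no-singular-ends observation in the Introduction.

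Two caveats on your sketch, neither fatal. First, Casorati--Weierstrass alone does not yield infinite spherical area: density of the image does not imply values are attained with infinite multiplicity. You need Picard's great theorem (every value with at most two exceptions is taken infinitely often near an essential singularity), or equivalently a first-main-theorem area count; since you name Picard alongside Casorati--Weierstrass, the argument goes through, but the load-bearing ingredient should be Picard. Second, the meromorphic extension of $\mathrm{d}h$ is not a formal consequence of the regularity conditions of Theorem~\ref{thm-period}: ruling out an essential singularity of the height differential at a puncture requires a genuine lemma (in Osserman's survey this is the statement that completeness of a metric $|f|\,|\mathrm{d}z|$ with $f$ holomorphic on a punctured disk, with finite total curvature already absorbed into the extended Gauss map, forces $f$ to have at most a pole). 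You should cite or prove that lemma rather than fold it into ``pin down the zero/pole orders.'' Notably, this is precisely the step whose failure in $\mathbb{R}^4_1$ the paper is highlighting: Examples~\ref{exa-essen} and \ref{exa-essen2} show that for stationary surfaces the analogue of part (2) is false, so any proof of (2) must use the $\mathbb{R}^3$-specific structure, as yours does.
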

Huber's conclusion (1) means \emph{finite total curvature $\Rightarrow$ finite topology}, which is a purely intrinsic result. In particular, this is valid also for stationary surfaces in $\mathbb{R}^4_1$.
Surprisingly, as to the extrinsic geometry, Osserman's result
$2)$ is no longer true in $\mathbb{R}^4_1$. In particular we
have non-algebraic counter-examples given below:

\begin{example}
[$M_{k,a}$ with essential singularities and finite total curvature \cite{Ma}]
\label{exa-essen}
\begin{equation}\label{ex-essential}
M_{k,a}\cong \mathbb{C}-\{0\},~\phi=z^k \mathrm{e}^{az},~\psi=-\frac{\mathrm{e}^{az}}{z^k},~\mathrm{d}h=\mathrm{e}^{-az}\mathrm{d}z~.
\end{equation}
where integer $k$ and real number $a$ satisfy
$k\ge 2, 0< a<\frac{\pi}{2}$.
\end{example}
\begin{proposition}\cite{Ma}\label{prop-essential}
Stationary surfaces $M_{k,a}$ in Examples~\ref{exa-essen} are regular, complete stationary surfaces with two ends at $z=0,\infty$ satisfying the period conditions. Moreover their total curvature converges absolutely with
\begin{equation}\label{eq-essential}
\int_M
K\mathrm{d}M=-4\pi k~, ~~\int_M K^{\perp}\mathrm{d}M =0~.
\end{equation}
\end{proposition}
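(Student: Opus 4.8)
The plan is to exploit a crucial algebraic simplification of the Weierstrass data. Writing $f=\mathrm{d}h/\mathrm{d}z=\mathrm{e}^{-az}$ and $r=|z|$, I observe that the exponentials cancel in all the one-forms entering \eqref{x}:
\[
\phi\,\mathrm{d}h=z^k\,\mathrm{d}z,\qquad \psi\,\mathrm{d}h=-z^{-k}\,\mathrm{d}z,\qquad \phi\psi\,\mathrm{d}h=-\mathrm{e}^{az}\,\mathrm{d}z.
\]
First I would record the induced metric. A direct computation from \eqref{x} gives $\mathrm{d}s^2=4|f|^2|\phi-\bar\psi|^2|\mathrm{d}z|^2$, and since $f(\phi-\bar\psi)=z^k+\bar z^{-k}\mathrm{e}^{-2a\mathrm{i}\,\mathrm{Im}\,z}$ (the factor $\mathrm{e}^{a(\bar z-z)}$ having modulus one), this collapses to the clean expression
\[
\mathrm{d}s^2=4\,\bigl|z^k+\bar z^{-k}\mathrm{e}^{-2a\mathrm{i}\,\mathrm{Im}\,z}\bigr|^2\,|\mathrm{d}z|^2.
\]

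Next I would verify regularity, completeness and the period conditions of Theorem~\ref{thm-period}. For \textbf{regularity}, $\phi=\bar\psi$ is equivalent to the vanishing of the bracket above; taking moduli forces $|z|=1$, and on the unit circle $\bar z^{-k}=z^k$, reducing the equation to $\mathrm{e}^{-2a\mathrm{i}\,\mathrm{Im}\,z}=-1$, i.e. $2a\,\mathrm{Im}\,z\in\pi(2\mathbb{Z}+1)$; but $|\mathrm{Im}\,z|\le1$ on $|z|=1$ together with $0<a<\pi/2$ forces $|2a\,\mathrm{Im}\,z|<\pi$, so no solution exists. This is exactly where the hypothesis $0<a<\pi/2$ is used. \textbf{Completeness} follows from the asymptotics $\mathrm{d}s\sim2r^{-k}|\mathrm{d}z|$ near $z=0$ and $\mathrm{d}s\sim2r^{k}|\mathrm{d}z|$ near $z=\infty$, which make radial paths to the two ends infinitely long once $k\ge1$. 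For the \textbf{period conditions}, the only nontrivial loop is $|z|=1$; using the simplified forms, $\oint z^k\,\mathrm{d}z=\oint z^{-k}\,\mathrm{d}z=\oint\mathrm{e}^{-az}\,\mathrm{d}z=\oint\mathrm{e}^{az}\,\mathrm{d}z=0$ for $k\ge2$, so both the horizontal and vertical conditions hold trivially.

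The main work, and the genuine obstacle, is the total curvature, precisely because $z=\infty$ is an \emph{essential} singularity and the surface is not algebraic, so Theorem~\ref{GB2} does not apply. I would instead evaluate directly via the boundary form in \eqref{eq-totalcurvature} on the annulus $M_\epsilon^R=\{\epsilon\le|z|\le R\}$,
\[
\int_{M_\epsilon^R}(-K+\mathrm{i}K^\perp)\,\mathrm{d}M=-2\mathrm{i}\Bigl(\oint_{|z|=R}-\oint_{|z|=\epsilon}\Bigr)\frac{\phi_z}{\phi-\bar\psi}\,\mathrm{d}z,
\]
and control the two boundary integrals as $R\to\infty$, $\epsilon\to0$. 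Writing $\phi_z/(\phi-\bar\psi)=(\mathrm{d}\ln\phi/\mathrm{d}z)/(1-\bar\psi/\phi)$ with $\mathrm{d}\ln\phi/\mathrm{d}z=k/z+a$ and $|\bar\psi/\phi|=r^{-2k}$, the outer integral tends to $2\pi\mathrm{i}k$ (the residue of $k/z$, the corrections being $O(R^{1-2k})$), while rewriting the integrand near $z=0$ as $-(\phi_z/\bar\psi)(1-\phi/\bar\psi)^{-1}$ with $|\phi_z/\bar\psi|=O(r^{2k-1})$ shows the inner integral tends to $0$. This yields $\int_M(-K+\mathrm{i}K^\perp)\,\mathrm{d}M=4\pi k$, i.e. $\int_M K\,\mathrm{d}M=-4\pi k$ and $\int_M K^\perp\,\mathrm{d}M=0$.

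Finally I would establish absolute convergence, which legitimizes the improper integral above; here the same cancellation is decisive. A computation using $|\bar\psi_{\bar z}|=|\psi_z|$ gives
\[
\frac{|\phi_z|\,|\bar\psi_{\bar z}|}{|\phi-\bar\psi|^2}=\frac{|z|^{-2}\,|k^2-a^2z^2|}{\bigl|z^k+\bar z^{-k}\mathrm{e}^{-2a\mathrm{i}\,\mathrm{Im}\,z}\bigr|^2},
\]
with the factors $\mathrm{e}^{2a\,\mathrm{Re}\,z}$ cancelling between numerator and denominator. The denominator is bounded below by $\tfrac12 r^{-2k}$ near $0$ and by $\tfrac12 r^{2k}$ near $\infty$ (and is nonzero on the compact annulus by regularity), so this density behaves like $r^{2k-2}$ near $0$ and like $r^{-2k}$ near $\infty$; integrating against $r\,\mathrm{d}r\,\mathrm{d}\theta$ reduces to $\int_0 r^{2k-1}\,\mathrm{d}r$ and $\int^\infty r^{1-2k}\,\mathrm{d}r$, the latter converging exactly when $k\ge2$. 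This identifies $k\ge2$ as the precise source of finite total curvature, and completes the proof.
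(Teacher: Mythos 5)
Your proposal is correct and takes essentially the same approach as the paper's own argument: the proposition is quoted from \cite{Ma}, but the paper's proof of the parallel Example~\ref{exa-essen2} proceeds identically --- exponential cancellation in the Weierstrass forms, a modulus argument forcing $|z|=1$ to verify $\phi\ne\bar\psi$, residue-free (indeed exact) forms for the periods, asymptotic estimates of the curvature density for absolute convergence (with the same threshold excluding the lowest power), and Stokes' theorem on annuli with the outer contour integral of $\phi_z/(\phi-\bar\psi)$ producing the residue term and the inner one vanishing. All your computations (metric $4|f|^2|\phi-\bar\psi|^2|\mathrm{d}z|^2$, the bound $|2a\,\mathrm{Im}\,z|<\pi$, the $O(R^{1-2k})$ and $O(\epsilon^{2k})$ error estimates) check out.
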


\begin{remark}
Taking different height differential $\mathrm{d}h$ in Example~\ref{exa-essen}, we can obtain
other examples with the same total Gaussian curvature. Yet the total Gaussian curvature $-\int_M K\mathrm{d}M=4\pi$ could not be realized since when $k=1$ the integral is not absolutely convergent.
\end{remark}

Similar to the construction of Example~\ref{exa-essen} and the proof to
Proposition~\ref{prop-essential} as in \cite{Ma}, we have non-oriented, non-algebraic examples as below.

\begin{example}
[stationary M\"obius strips with essential singularities and finite total curvature]
\label{exa-essen2}
\begin{equation}\label{ex-essential2}
\phi=z^{2p-1}\mathrm{e}^{\frac{1}{2}(z-\frac{1}{z})},~~
\psi=\frac{-1}{z^{2p-1}}\mathrm{e}^{\frac{1}{2}(z-\frac{1}{z})},~~
\mathrm{d}h=\mathrm{d}~\mathrm{e}^{-\frac{1}{2}(z-\frac{1}{z})}.~~(p\in\mathbb{Z}_{\ge 2})
\end{equation}
\end{example}
\begin{proposition}
Example~\ref{exa-essen2} is a complete immersed stationary M\"obius strip with finite total curvature
$\int|-K+\mathrm{i}K^{\perp}|\mathrm{d}M<+\infty$. We have
\[
-\int_M K\mathrm{d}M=2(2p-1)\pi,~~\int_M K^{\perp}\mathrm{d}M=0.
\]
In particular, the smallest possible value of their total Gaussian curvature is $6\pi$.
\end{proposition}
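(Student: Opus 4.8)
The plan is to verify, in turn, the non-orientability, the period conditions, the regularity (i.e.\ immersion), the completeness, and the two curvature identities, reproducing the argument for Proposition~\ref{prop-essential} almost verbatim and supplying only the one genuinely new ingredient: the orientation-reversing involution. First I would present the double cover as $\widetilde M=\mathbb{C}\setminus\{0\}$ equipped with the fixed-point-free anti-holomorphic involution $I:z\mapsto-1/\bar z$ of Example~\ref{exa-meeks}, so that $M=\widetilde M/\{\mathrm{id},I\}$ is a M\"obius band. To invoke Theorem~\ref{thm-nonorientable} I must check \eqref{eq-nonorientable}; this is a one-line substitution resting on the identity $(-1/\bar z)-1/(-1/\bar z)=\bar z-1/\bar z=\overline{z-1/z}$ together with $(-1)^{2p-1}=-1$, which yields $\phi\circ I=\bar\psi$ and $\psi\circ I=\bar\phi$, while $I^*\mathrm{d}h=\overline{\mathrm{d}h}$ follows at once since $\mathrm{d}h$ is exact and its primitive $e^{-\frac12(z-1/z)}$ is carried by $I$ to its complex conjugate.

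Next I would dispose of the period conditions exactly as in Proposition~\ref{prop-essential}. The exponential factors cancel in $\phi\,\mathrm{d}h$ and $\psi\,\mathrm{d}h$, leaving rational forms with poles only over the ends $0,\infty$ and no residue once $p\ge2$, so the horizontal periods \eqref{eq-period1} vanish; the vertical periods \eqref{eq-period2} are immediate because $\mathrm{d}h$ and $\phi\psi\,\mathrm{d}h$ are the exact differentials of $e^{-\frac12(z-1/z)}$ and $e^{+\frac12(z-1/z)}$. Completeness is then read off from the length element $2|\phi-\bar\psi|\,|\mathrm{d}h|$: since $|\phi|$ and $|\bar\psi|$ carry the common exponential modulus $e^{\frac12\mathrm{Re}(z-1/z)}$, which cancels against the modulus of $\mathrm{d}h$, this element is comparable to $|z|^{2p-1}|\mathrm{d}z|$ near $z=\infty$ and to $|z|^{-(2p+1)}|\mathrm{d}z|$ near $z=0$, both giving divergent length along paths running out to the ends.

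The heart of the matter is the regularity condition, i.e.\ the absence of solutions of \eqref{eq-singular}; this is precisely the transcendental difficulty flagged in the introduction, and it is the step I expect to carry the weight of the proof. Here it resolves cleanly in two stages. Taking moduli in $\phi(z)=\overline{\psi(z)}$ and using that $|e^{\frac12(z-1/z)}|=|e^{\frac12(\bar z-1/\bar z)}|=e^{\frac12\mathrm{Re}(z-1/z)}$, the equation forces $|z|^{2p-1}=|z|^{-(2p-1)}$, hence $|z|=1$. Restricting to the unit circle $z=e^{\mathrm{i}\theta}$ (where $\bar z=1/z$), the exponents become $z-1/z=2\mathrm{i}\sin\theta$ and $\bar z-1/\bar z=-2\mathrm{i}\sin\theta$, and after cancelling the common factor $e^{\mathrm{i}(2p-1)\theta}$ the equation collapses to $e^{2\mathrm{i}\sin\theta}=-1$, i.e.\ $\sin\theta=\tfrac{\pi}{2}+k\pi$. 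Since $|\sin\theta|\le 1<\tfrac{\pi}{2}$, no such $\theta$ exists, so $\phi\neq\bar\psi$ throughout $\widetilde M$.

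Finally I would compute the curvature. Because the integrand of \eqref{eq-totalcurvature} involves only $\phi,\psi$, the same localization as in Proposition~\ref{prop-essential} applies: I evaluate $-2\mathrm{i}\oint\frac{\phi_z}{\phi-\bar\psi}\,\mathrm{d}z$ on small circles about the two ends. Since $|\phi|/|\bar\psi|=|z|^{2(2p-1)}$, near $z=\infty$ the term $\phi$ dominates and $\frac{\phi_z}{\phi-\bar\psi}$ is asymptotic to $\mathrm{d}\log\phi=\big(\tfrac{2p-1}{z}+\tfrac12(1+z^{-2})\big)\mathrm{d}z$, whose integral over the circle is $2\pi\mathrm{i}(2p-1)$; near $z=0$ the term $\bar\psi$ dominates and, the surviving exponential factor again having unit modulus, the integrand is $O(|z|^{4p-4})$ and contributes nothing. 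This gives $-\int_{\widetilde M}K\,\mathrm{d}M=-2\mathrm{i}\cdot2\pi\mathrm{i}(2p-1)=4\pi(2p-1)$ together with $\int K^{\perp}=0$, the same cancellation of exponentials made quantitative yielding the required absolute convergence. Halving for the quotient gives $-\int_M K\,\mathrm{d}M=2(2p-1)\pi$, and the minimum over $p\ge2$, attained at $p=2$, is $6\pi$, as asserted.
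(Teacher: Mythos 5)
Your proposal is correct and follows the paper's proof essentially step for step: the same verification of \eqref{eq-nonorientable} via the involution $z\mapsto -1/\bar z$, the same exactness/no-residue argument for the vertical and horizontal periods with $p\ge 2$, the identical regularity argument reducing $\phi=\bar\psi$ first to $|z|=1$ and then to the impossible equation $\mathrm{e}^{2\mathrm{i}\sin\theta}=-1$, and the same Stokes-theorem localization of \eqref{eq-totalcurvature} on annuli $\{r\le|z|\le R\}$, with the outer contour contributing $-2\mathrm{i}\cdot 2\pi\mathrm{i}(2p-1)=4\pi(2p-1)$ and the inner one vanishing at rate $O(r^{4p-3})$. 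Your only addition is the explicit completeness estimate $\mathrm{d}s\sim|z|^{2p-1}|\mathrm{d}z|$ near $\infty$ and $\sim|z|^{-(2p+1)}|\mathrm{d}z|$ near $0$ (using the exact cancellation of the exponential moduli), a point the paper's proof leaves tacit, so this is a correct and slightly more complete rendering of the same argument.
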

\begin{proof}
It is easy to verify $\phi^*=\bar\psi,\psi^*=\bar\phi,
\mathrm{d}h^*=\overline{\mathrm{d}h}$. So we obtain a M\"obius strip
according to Theorem~\ref{thm-nonorientable}.

The regularity is also easy to verify.
For example, if there exist $z$ such that $\phi(z)=\bar\psi(z)$, by \eqref{ex-essential2}
we get $z\bar{z}~\mathrm{e}^{\mathrm{i}\cdot\mathrm{Im}(z-\frac{1}{z})}=-1$. So $|z|=1$
and $z=\mathrm{e}^{\mathrm{i}\theta}$. Insert this back to the previous equation; we obtain
$\mathrm{e}^{2\mathrm{i}\sin\theta}=-1$, which is impossible.

Next we check the period condition. Since
\[
\mathrm{d}h=\mathrm{d}~\mathrm{e}^{-\frac{1}{2}(z-\frac{1}{z})},~~
\phi\psi\mathrm{d}h=\mathrm{d}~\mathrm{e}^{\frac{1}{2}(z-\frac{1}{z})},
\]
both being exact $1$-forms, there are no vertical periods. At the same time,
\[
\phi\mathrm{d}h=-\frac{1}{2}\left(1+\frac{1}{z^2}\right)z^{2p-1}\mathrm{d}z,~~
\psi\mathrm{d}h=-\frac{1}{2}\left(1+\frac{1}{z^2}\right)\frac{-1}{z^{2p-1}}\mathrm{d}z.
\]
When $p\ge 2$ neither of these $1$-forms has residue. So there are no horizontal periods.

By direct computation one can show that the integral of the absolute total curvature
$\int|-K+\mathrm{i}K^{\perp}|\mathrm{d}M$ is asymptotic to
$\int |z|^{2-4p}\mathrm{d}z\mathrm{d}\bar{z}$ when $z\to\infty$, or to
$\int |z|^{4p-6}\mathrm{d}z\mathrm{d}\bar{z}$ when $z\to 0$.
Thus when $p\ge 2$ the total curvature integral
converges absolutely. Approximate $\widetilde{M}$ by domains
$A_{r,R}\triangleq\{0<r\le |z|\le R\}$. By Stokes theorem we get
\begin{align*}
\int_{A_{r,R}}(-K+\mathrm{i}K^{\perp})\mathrm{d}M
&=2\mathrm{i}\int_{A_{r,R}}
\frac{\phi_z\bar{\psi}_{\bar{z}}}{(\phi-\bar{\psi})^2}
\mathrm{d}z\wedge \mathrm{d}\bar{z}\\
&=-2\mathrm{i}\oint_{|z|=R}
\frac{\phi_z}{\phi-\bar{\psi}}\mathrm{d}z+2\mathrm{i}\oint_{|z|=r}
\frac{\phi_z}{\phi-\bar{\psi}}\mathrm{d}z~,
\end{align*}
where
\[\frac{\phi_z}{\phi-\bar{\psi}}=
\frac{|z|^{4p-2}\left(\frac{1}{2}+\frac{2p-1}{z}+\frac{1}{2z^2}\right)}
{|z|^{4p-2}+\mathrm{e}^{-\mathrm{i}\cdot\mathrm{Im}(z-\frac{1}{z})}}~.
\]
When $R\to\infty$ the first contour integral converges to $-2\mathrm{i}(2p-1)\cdot 2\pi\mathrm{i}$.
When $r\to 0$ the second contour integral converges to $0$. This completes the proof.
\qed\end{proof}

In the discussion above, when $p=1$ the horizontal period condition
is violated, and the total curvature integral does not
converge absolutely. Thus among these simplest examples
(including Examples~\ref{exa-essen}) we can not find
one with total Gaussian curvature $4\pi$. This motivates
the following

\begin{conjecture}
There does NOT exist any complete, non-algebraic stationary
surfaces immersed in $\mathbb{R}^4_1$ with finite total Gaussian
curvature $-\int K\mathrm{d}M=4\pi$.
\end{conjecture}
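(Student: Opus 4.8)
The plan is to analyze the total curvature end by end and show that a genuine essential singularity is incompatible with the value $4\pi$. First I would invoke Huber's theorem to realize $M$ as $\overline{M}\setminus\{p_1,\dots,p_r\}$ for a compact Riemann surface $\overline{M}$ of genus $g$; this is purely intrinsic and holds verbatim in $\mathbb{R}^4_1$. The Gauss maps $\phi,\psi$ are meromorphic on the open surface $M$, but by the failure of Osserman's extension theorem some end carries an essential singularity of $\phi$, $\psi$ or $\mathrm{d}h$ -- this is exactly what ``non-algebraic'' records. The first substantive step is to replace Osserman's meromorphic extension by an \emph{asymptotic normal form}: near each end the finiteness of $\int|K|\mathrm{d}M$ should force $\phi\sim z^{w_j}\mathrm{e}^{h_j(z)}$ (and similarly for $\psi,\mathrm{d}h$), where $w_j\in\mathbb{Z}$ is a finite local winding and $h_j$ is an essentially singular correction whose contribution to $\mathrm{d}\log\phi$ around a small loop vanishes. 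The model is Example~\ref{exa-essen}, where $\phi=z^k\mathrm{e}^{az}$ has winding $k$ around each end even though the singularity is essential.

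Second, I would recompute the total curvature through the boundary formula \eqref{eq-totalcurvature}, writing $-\int_M K\mathrm{d}M=\mathrm{Re}\big(-2\mathrm{i}\sum_j\oint_{\gamma_j}\tfrac{\phi_z}{\phi-\bar\psi}\mathrm{d}z\big)$ as a sum over small loops around the ends. Using the normal form, along each arc where $|\phi|$ dominates $|\bar\psi|$ the integrand equals $\mathrm{d}\log\phi$ up to an integrable error, while along arcs where $|\bar\psi|$ dominates one switches to the conjugate form $-2\mathrm{i}\oint\tfrac{\bar\psi_{\bar z}}{\phi-\bar\psi}\mathrm{d}\bar z$; the transition arcs, where $\phi-\bar\psi$ is small, are precisely the singular-end data governed by Definition~\ref{lemma-index2}. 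The outcome I expect is that the quantization of total curvature survives the essential singularities: each loop contributes an integer multiple of $2\pi$, and the total obeys an identity of the shape of \eqref{eq-deg4} with $\deg\phi,\deg\psi$ replaced by the finite windings $w_j$ accumulated at the ends. Thus $-\int_M K\mathrm{d}M=4\pi$ would pin the total winding budget at the value $2$, leaving only the handful of configurations already enumerated in the orientable analysis (Cases~1--5 of Section~3), now read off the generalized degrees.

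Third, I would re-run that case analysis with these generalized degrees and show that each surviving configuration either (i) forces $w_j=0$ at every end where the data fail to extend, which makes $\phi$ meromorphic there and contradicts non-algebraicity, or (ii) produces a nonzero residue in $\phi\,\mathrm{d}h$ or $\psi\,\mathrm{d}h$ and so violates the horizontal period condition \eqref{eq-period1}. This last mechanism is exactly the one that kills the value $4\pi$ in the explicit family \eqref{ex-essential}: at $k=1$ the form $\psi\,\mathrm{d}h=-z^{-1}\mathrm{d}z$ acquires a residue. The heuristic is that a total-curvature budget of only $4\pi$ forces the meromorphic winding of a transcendental end down to $\pm1$, at which order the exponential factor can no longer be arranged to cancel the residue of $\phi\,\mathrm{d}h$.

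The hard part -- and the reason the statement is posed as a conjecture rather than a theorem -- is the interaction of the regularity condition $\phi\neq\bar\psi$ with the essential singularity. Ruling out solutions of the mixed holomorphic/anti-holomorphic equation $\phi(z)=\overline{\psi(z)}$ has no general theory (as stressed in the Introduction), and when $\phi,\psi$ are transcendental one cannot enumerate solutions as in Lemma~\ref{lem-main}. A genuinely complete proof would need either a rigidity theorem showing that the $4\pi$-minimizing data must extend meromorphically (forcing algebraicity and reducing everything to Theorem~\ref{thm-4pi1}), or a direct impossibility argument for the simultaneous period, regularity and completeness conditions at a winding-one transcendental end. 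I expect the residue/period obstruction of the third step to dispose of all configurations except possibly those with two transcendental ends of opposite winding, and it is there that the remaining difficulty of the conjecture is concentrated.
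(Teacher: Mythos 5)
You should first note a category error: the statement you set out to prove is Conjecture 5.8 of the paper, and the paper contains \emph{no proof of it} --- only motivating evidence, namely that in the explicit non-algebraic families of Examples~\ref{exa-essen} and \ref{exa-essen2} the value $4\pi$ is unattainable because at $k=1$ (resp.\ $p=1$) the form $\psi\,\mathrm{d}h$ acquires a residue, violating the horizontal period condition \eqref{eq-period1}, and the curvature integral ceases to converge absolutely. Your proposal correctly reproduces this heuristic (your third step is exactly the paper's observation about \eqref{ex-essential}), and you are candid that what you have is a program, not a proof. Judged as a program, however, it has gaps that go beyond the one you flag.

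Concretely: (a) Your first step, the asymptotic normal form $\phi\sim z^{w_j}\mathrm{e}^{h_j(z)}$ with finite winding, is unjustified. Finiteness of $\int|K|\,\mathrm{d}M$ controls only the mixed quantity $\phi_z\bar\psi_{\bar z}/(\phi-\bar\psi)^2$ in \eqref{eq-totalcurvature}; the whole point of Example~\ref{exa-essen} is that cancellation between $\phi$ and $\bar\psi$ permits finite total curvature while each Gauss map is individually wild. Nothing prevents $\phi$ from having zeros or poles accumulating at the puncture, in which case no factorization $z^{w}\mathrm{e}^{h}$ exists and ``winding'' is not even defined. (b) Your second step asserts that each boundary loop contributes an integer multiple of $2\pi$; but the paper proves quantization (Corollary~\ref{cor-quantization}) only for algebraic surfaces without bad singular ends, and for transcendental ends the limit of the contour integral is computed case by case (as in the proof for Example~\ref{exa-essen2}), with no general convergence-to-integer statement available. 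Worse, near an essential singularity the set where $|\phi|\approx|\bar\psi|$ need not decompose the loop into finitely many transition arcs, so your dominance dichotomy may fail outright. (c) The inference in your third step that $w_j=0$ ``makes $\phi$ meromorphic there'' is false even granting the normal form: $\phi=\mathrm{e}^{z}$ at the end $z=\infty$ has $\oint_{|z|=R}\mathrm{d}\log\phi=0$, hence zero winding, yet an essential singularity. So the case analysis cannot close as described. (d) Finally, as you yourself note and as the paper stresses in the Introduction, there is no general theory for the mixed equation $\phi(z)=\overline{\psi(z)}$ with transcendental data, and Lemma~\ref{lem-main} is intrinsically a statement about polynomials. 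In sum, your sketch identifies the right obstruction (the residue/period failure at winding $\pm1$) and so matches the paper's own discussion of why the conjecture is plausible, but each of your three steps contains an unproven or incorrect assertion, and the conjecture remains open.
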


\section{Appendix A}

In the proof to our main theorem, a key fact is that a complete,
algebraic stationary surface $M\cong \mathbb{C}\backslash\{0\}$
with two good singular ends and $-\int K\mathrm{d}M=4\pi$ must have
other singular (branch) points. This follows from
\begin{lemma}\label{lem-main}
For any positive integer $m\in \mathbb{Z}^+$ and any non-zero
 complex parameters $a,b\in\mathbb{C}\backslash\{0\}$ whose sum
$a+b=-e^{it}$ is a given unit complex number ($t\in\mathbb{R}$),
there always exists a solution $z\ne 0$ to the equation
\begin{equation}\label{eq-main}
(\bar{z}-\bar{a})(z-b)=\frac{z^{m+1}}{\bar{z}^m}.
\end{equation}
\end{lemma}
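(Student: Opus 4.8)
The plan is to recast the complex equation \eqref{eq-main} into real form and extract the modulus equation $|z|^{2m+2}=|z|^{2m}\cdot|\bar z-\bar a||z-b|$, which simplifies to $|z|^2=|z-a|\,|z-b|$, together with a phase (argument) equation. Writing $z=re^{i\theta}$, the modulus condition becomes a single real constraint coupling $r$ and $\theta$, while the argument of both sides must agree modulo $2\pi$. The key structural observation I would exploit is that \eqref{eq-main} can be rewritten as $\bar z^{m+1}(\bar z-\bar a)=z^{m+1}(z-\overline{b})^{\mathrm{conj}}$-type symmetry; more cleanly, conjugating \eqref{eq-main} gives $(z-a)(\bar z-\bar b)=\bar z^{m+1}/z^m$, so multiplying the original equation by its conjugate yields the clean real equation
\begin{equation*}
|z-a|^2\,|z-b|^2=|z|^2,
\end{equation*}
which is a curve (a bounded oval-like locus, essentially a Cassini-type quartic) in the $z$-plane. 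The strategy is to show this locus is nonempty and that the phase equation can be solved somewhere along it.

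**First I would** establish existence via a topological/degree argument rather than by explicit solution. Define $F(z)=(\bar z-\bar a)(z-b)\bar z^m - z^{m+1}$ (clearing denominators by multiplying \eqref{eq-main} by $\bar z^m$, valid since $z\neq 0$), so that solutions of \eqref{eq-main} are exactly the nonzero zeros of $F$. The plan is to analyze $F$ as a map $\mathbb{C}\to\mathbb{C}$ and compute a winding number / mapping degree on a large circle $|z|=R$ and a small circle $|z|=\epsilon$. For large $|z|$ the dominant terms of $F$ are $|z|^{2}\bar z^m - z^{m+1}=\bar z^m(z\bar z) - z^{m+1}=z^{m+1}(\overline{(z/|z|)}^{2m}\cdot\text{stuff})$; tracking the leading behavior $\bar z^{m+2}\cdot\text{vs}\cdot z^{m+1}$ should give a nonzero winding number of $F$ around the origin as $z$ traverses $|z|=R$. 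Comparing this with the winding on a small circle (where the dominant term is $\bar a\cdot b\cdot\bar z^m$ from the constant-ish part, giving a different winding), the difference in winding numbers counts the zeros inside the annulus by the argument principle adapted to non-holomorphic $F$ — here I must use the fact that $\deg$ over the annulus boundary equals the algebraic count of zeros, treating $F$ as a smooth map and invoking the mod-2 or integer degree.

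**The hard part will be** making the winding-number count rigorous for a \emph{non-holomorphic} $F$: since $F$ mixes $z$ and $\bar z$, the naive argument principle does not apply, and zeros can have either sign of local index. I would address this by computing the topological degree of $F/|F|:S^1_R\to S^1$ on the outer and inner boundary circles (these are genuine integers depending only on the leading monomials $\bar z^{m+2}$ outside versus $\bar z^{m}$ or a constant inside), and arguing that if $F$ had \emph{no} zeros in the closed annulus then these two boundary degrees would be forced equal — a contradiction once I verify they differ (the exponents $m+2$ and the inner leading exponent are distinct). The delicate points are: (i) confirming the leading term genuinely dominates uniformly on the bounding circles for suitable $R$ large and $\epsilon$ small, which is where the hypothesis $a+b=-e^{it}$, i.e. $|a+b|=1$ with $a,b\neq 0$, may be needed to prevent degenerate cancellation of subleading terms; and (ii) handling the boundary case where a zero might escape to $0$ or $\infty$. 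An alternative, more hands-on fallback is to reduce to the real system in $(r,\theta)$, show the modulus curve $|z-a||z-b|=|z|$ is a nonempty compact curve encircling or separating $0$, and then apply the intermediate value theorem to the phase equation $\arg$ along that curve — but I expect the degree argument to be cleaner and to explain \emph{why} a solution must exist for \emph{every} parameter value, which is exactly the uniform statement the lemma demands.
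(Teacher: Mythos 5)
There is a genuine gap, and it is fatal to your primary strategy. Carry out the winding-number computation you propose for $F(z)=(\bar z-\bar a)(z-b)\bar z^m-z^{m+1}$: on a large circle $|z|=R$ the dominant term is $z\bar z^{m+1}=|z|^2\bar z^m$ (you miswrote this as $\bar z^{m+2}$; note $z\bar z^{m+1}\ne\bar z^{m+2}$ off the real axis), which has modulus $R^{m+2}$ against $O(R^{m+1})$ for all remaining terms, so the degree of $F/|F|$ on the outer circle is $-m$. On a small circle $|z|=\epsilon$ the dominant term is $\bar a b\,\bar z^m$, giving degree $-m$ again. The two boundary degrees are \emph{equal}, so the signed count of zeros of the smooth (non-holomorphic) map $F$ in the annulus is $0$, which is consistent both with no zeros and with zeros of cancelling local index: no existence conclusion follows. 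Moreover this failure is structural, not a matter of sharper bookkeeping: any argument that uses only the leading asymptotics of $F$ near $0$ and $\infty$ is blind to the hypothesis $|a+b|=1$, and that hypothesis is essential — the paper's own Lemma~\ref{lem-a=b} exhibits parameters ($m=1$, $a=b\in\mathbb{R}$, $-a>1$, the sum condition dropped) for which \eqref{eq-main} has \emph{no} solution, even though the boundary degrees there are exactly the same $-m$ on both circles. So the topological mechanism you invoke provably cannot detect the solutions.

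Your ``fallback'' is in fact the paper's actual method, but as you state it, it is a restatement of the problem rather than a proof. After rotating so that $a+b=-1$ and a unit factor $\lambda$ appears on the right, the paper works on the equal-modulus locus $\Gamma=\{z:|z-a||z-b|=|z|\}$ (your first display, $|z|^2=|z-a||z-b|$, is a slip; squaring the correct relation gives your later $|z-a|^2|z-b|^2=|z|^2$). Nonemptiness of $\Gamma$ is easy, as you say; the entire difficulty is to produce \emph{one component} of $\Gamma$ along which the phase difference $\delta=(2m+1)\arg z-\arg\left[(\bar z-\bar a)(z-b)\right]$ has variation at least $2\pi$, so that the intermediate value theorem hits the prescribed unit value $\lambda$. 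A closed component can perfectly well have zero net variation of $\delta$, so compactness and nonemptiness alone do not suffice. The paper supplies this core by elementary geometry in two cases: for $|a-b|\le1$ it constructs two points $C,D$ on one component (intersection of the perpendicular bisector of $\overline{aO}$ with the unit circle centered at $b$, using $|b|\le1$, which follows from $a+b=-1$) and proves $\delta(D)-\delta(C)\ge2\pi$ via angle--side inequalities in the triangles $\triangle bCO$, $\triangle bDO$, $\triangle OCD$; for $|a-b|>1$ it uses $\angle aOb>\pi/2$ and intersection points of $\Gamma$ with the segments $Oa$, $Ob$ and the line $ab$, splitting into subcases according to whether the relevant points lie on the same component. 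Nothing in your proposal substitutes for this quantitative control of $\delta$, and it is precisely the place where the constraint $a+b=-e^{it}$ enters; without an argument at this level of detail the proof is incomplete.
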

\begin{proof}
By change of coordinates $z\to ze^{it}$, we may consider an
equivalent equation
\begin{equation}\label{eq-main2}
(\bar{z}-\bar{a})(z-b)=\lambda\frac{z^{m+1}}{\bar{z}^m},
\end{equation}
where $\lambda=e^{it(2m+1)}$ is a unit complex number, and $a,b$ are nonzero complex parameters satisfying
\[
a+b=-1.
\]
In other words, the middle point of the segment
$\overline{ab}$ is $-1/2$. We will show that there always
exists a solution $z\ne 0$ to the equation \eqref{eq-main2}
for any given unit complex number $\lambda$ and when $a+b=-1$.

First let us explain the basic idea of our proof.
Consider the equal-module locus \[
\Gamma=\{z\in\mathbb{C}:|z-a||z-b|=|z|\}
\]
where the two sides of \eqref{eq-main2} have equal modules.
Since $|z-a||z-b|<|z|$ when $z=a,b$,
and $|z-a|\cdot|z-b|>|z|$ when $z$ is big enough, by continuity
we know this locus is non-empty. It is easy to see that
$\Gamma=\cup\Gamma_j$ is a union of several continuous,
connected, (simple) closed curves.
Next we compare the argument of the
complex functions at both sides of \eqref{eq-main2}. Define
\[
\arg_L\triangleq \arg[(\bar{z}-\bar{a})(z-b)],~~
\arg_R\triangleq \arg(z^{m+1}/\bar{z}^m)=(2m+1)\arg(z).
\]
Note that the two arguments $\arg_L,\arg_R$ can be defined and
extended continuously along any continuous path (without self-intersection) on $\mathbb{C}$.
We want to find one component $\Gamma_j\subset\Gamma$ such that
\[
\delta= \arg_R-\arg_L,
\]
the difference of the two arguments, will have a
bounded variation greater than $2\pi$. Again by continuity
we know that along $\Gamma_j$ there is some point $z$ at which
both sides of \eqref{eq-main2} share equal modules and
arguments. This will finish our proof.\\

To estimate the variation of the argument difference $\delta$,
the next key point is to construct two points on the locus
$\Gamma$ using some elementary geometry. Now we have to consider
two cases separately.\\

\textbf{Case 1: $|a-b|\le 1$.}

In this case, the complex numbers $a,b$ correspond to two points
located inside the circle $|z+\frac 12|=\frac 12$ and
being symmetric about $z=-\frac 12$.

\begin{figure}
\includegraphics[width=12cm]{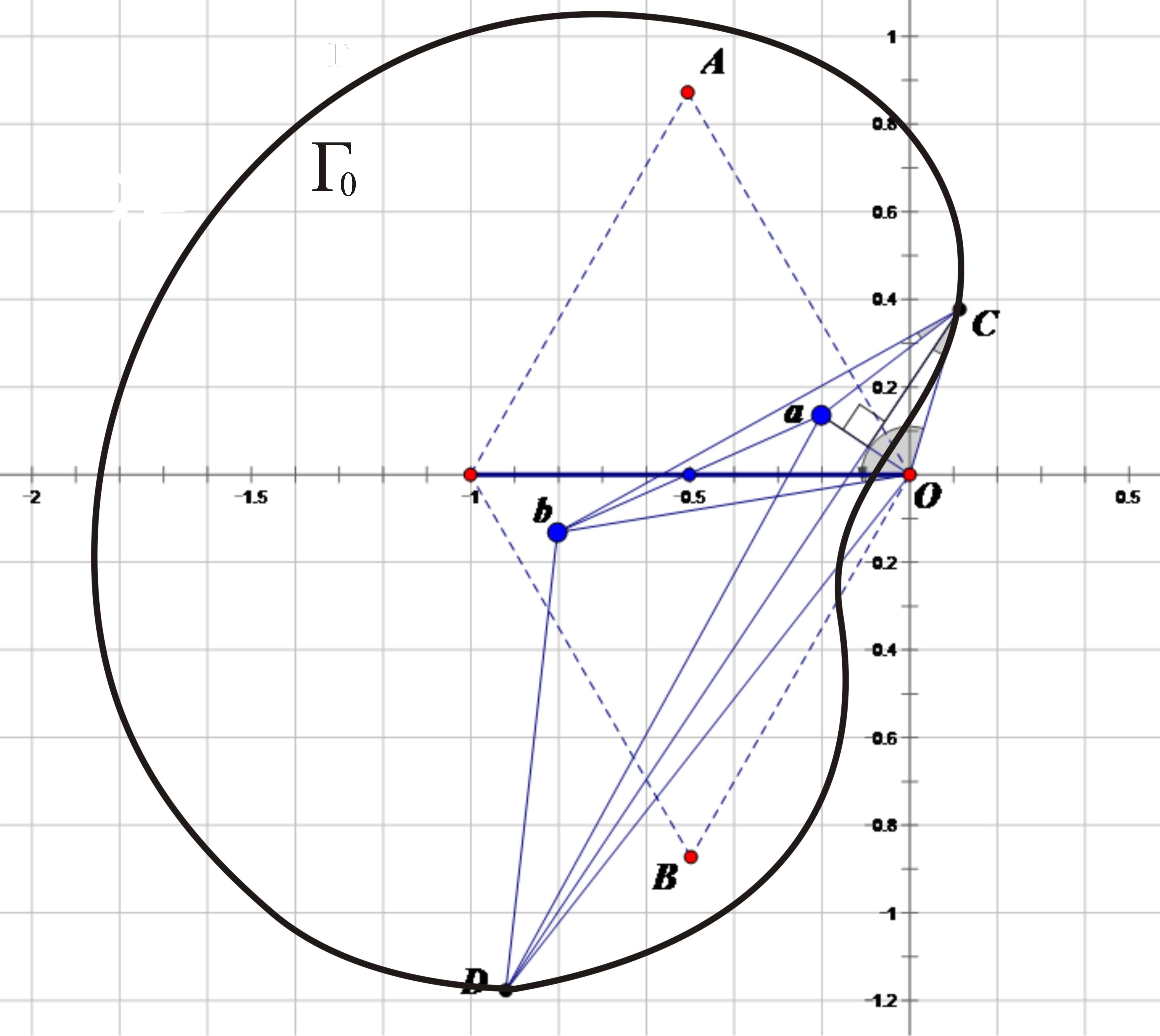}
\caption{Case 1, $|a-b|\le 1$}
\label{fig:case1}
\end{figure}

Suppose $\mathrm{Im}(a)\ge 0, \mathrm{Im}(b)\le 0$.
We want to find two points $C,D$ on $\Gamma$
which are equidistant to $a$ and the origin $O$,
at the same time whose distance to $b$ is $1$ (see Figure~ \ref{fig:case1}).
Such points are exactly the intersection between the bisector
of the segment $\overline{aO}$ and the unit circle centered at $b$.
Because the length $|\overline{ba}|$ and $|\overline{bO}|$ are
no more than $1$ ($|b|\le|b+1/2|+|-1/2|\le 1$), we know the
intersection points $C,D$ exist, and they are distinct.
Let $C$ be the one on the upper half plane.

We claim that $C,D$ must be located on one and the same
component (a simple closed curve) $\Gamma_0\subset\Gamma$.
Notice that the open segment $\overline{CD}$ (on the bisector) is contained in
the interior of the unit circle centered at $b$, hence also in the interior of
\[\Omega=\{z\in\mathbb{C}:|z-a||z-b|<|z|\}.\]
Let $\Gamma_0\subset\Gamma=\partial\Omega$ be the component
passing through $C$.
Then the straight line $CD$ must have at least one more
intersection with $\Gamma_0$, whose coordinate $z$ satisfies $|z-a||z-b|=|z|$
and $|z-a|=|z|$, hence $|z-b|=1$. It has to be $D$ as defined
above. This verifies our claim.

The main consequence of the condition $|a-b|\le 1$ is that
$|b|\le 1$. Using the relation that greater angle is opposite
greater side in the triangle $\triangle bCO$,
 we know
\[2\angle OCD+\angle aCb=\angle OCb \le \angle COb.\]
Similarly, in the triangle $\triangle bDO$ we have
\[2\angle ODC+\angle aDb=\angle ODb \le \angle DOb.\]
Taking sum of these two equalities and using
$\angle COb+\angle DOb=\pi-\angle OCD-\angle ODC$ in the triangle $\triangle OCD$, we get
\[\angle aCb+\angle aDb\le 3(\angle COb+\angle DOb)-2\pi
\le (2m+1)(\angle COb+\angle DOb)-2\pi. \]
Notice that
\begin{equation*}
\begin{split}
\angle aCb=-\arg_L(C)=\left.[\arg(z-a)-\arg(z-b)]\right|_{z=C},\\
\angle aDb=\arg_L(D)=\left.[\arg(z-b)-\arg(z-a)]\right|_{z=D},\\
(2m+1)(\angle COb+\angle DOb)=\arg_R(D)-\arg_R(C).
\end{split}
\end{equation*}
Then the previous inequality amounts to say
\[\delta(D)-\delta(C)\ge 2\pi.\]
Thus along the continuous path connecting $C,D$ which is
part of the equal-module locus $\Gamma_0\subset\Gamma$,
the quotient between $(\bar{z}-\bar{a})(z-b)$ and
$z^{m+1}/\bar{z}^m$ can take any given unit
complex parameter $\lambda$. This finishes the proof in the first case.

We observe that if $\mathrm{Im}(a)\le 0, \mathrm{Im}(b)\ge 0$
(or just interchange $a,b$ in Figure~\ref{fig:case1} above),
the proof is similar.

It seems that our proof relies on the special case of
Figure~\ref{fig:case1} where $a$ is
inside the triangle $bCO$. Indeed, because the positivity of
these two angles was never used in that proof, when $a$ is
outside the triangle $bCO$ the proof is still valid.\\

\textbf{Case 2: $|a-b|> 1$.}

\begin{figure}
\includegraphics[width=12cm]{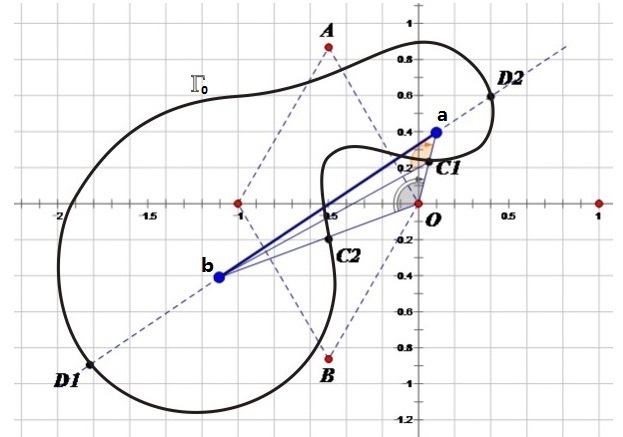}
\caption{Subcase 2.1}
\label{fig:case21}
\end{figure}
Other than case 1, now we have to find a different way to
construct such two points $C,D$ on the equal-module locus.

Consider the triangle $\triangle Oab$. The length of the
median on the side $\overline{ab}$ is less than half of
$|\overline{ab}|$. So $\angle aOb > \pi/2$.
Moreover, any point $C$ on the line segment $Oa$ or $Ob$ will
span an obtuse angle $\angle aCb$.
This is the main consequence of $|a-b|> 1$.

Let $C$ be a moving point on the line segment $Oa$ with
coordinate $z$. Since $|z-a||z-b|<|z|$ when $z$ is very close to
$a$, and $|z-a||z-b|>|z|$ when $z$ is very close to $0$, there
exist at least one intersection between $\overline{Oa}$ and
the equal-module locus $\Gamma$. We take $C_1$ to be the
one closest to $a$ among all such intersection points.
Similarly, we take $C_2$ to be the one closest to $b$
among all intersection points between $\overline{Ob}$ and $\Gamma$.

On the straight line $ab$ we can also find two intersection
points with $\Gamma$, denoted as $D_1, D_2$, such that $D_1, b, a, D_2$ are located on the line $ab$ in the usual
linear order, and $D_1$ ($D_2$) is the closest one among all
intersection points between $\Gamma$ and the ray $aD_1$ ($aD_2$).

Assume that $a$ is on the upper half plane and $b$ is on
the lower half plane. (The other possibilities will be treated
later.) We consider two subcases.

The first subcase is that $C_1, D_1$ are on the same
connected component $\Gamma_0$ of $\Gamma$. Let us start
from $C_1$ and end up with $D_1$ while turning counter-clockwise
around $a$ along $\Gamma_0$. Then it is easy to see that the
variation of $\delta$ will be more than $2\pi$ because the increase of $arg_L$ is
$\angle aC_1b>\frac{\pi}{2}$ and the decrease of $arg_R$ is $(2m+1)\angle D_1OC_1>\frac{(2m+1)\pi}{2}$. As explained before this finishes the proof.

The second subcase is that $D_1$ does not locate on the same
connected component $\Gamma_0$ passing through $C_1$.
Then $b$ must not be contained in the area bounded by
$\Gamma_0$ by our construction and assumption on $D_1$.
Going along $\Gamma_0$ counter-clockwise, $\arg_L$ will decrease
$2\pi$ while $\arg_R$ return to the same initial value.
This shows that the difference of arguments $\delta$ is exactly
$2\pi$, which also finishes our proof by the same reason.

If $a$ is on the lower half plane and $b$ is on the upper half
plane, we may choose the points $C_2, D_2$ instead,
and the proof is the same.

When $a,b\in \mathbb{R}$ and one of them is positive,
the triangle $\triangle Oab$ degenerates. Yet our proof is
still valid without any essential modification.
\qed\end{proof}

\begin{remark}
Before finding the traditional proof to Lemma~\ref{lem-main}
as given above, we seek help from symbolic and
numerical computations. Our colleague Professor Bican Xia,
using an algorithm developed by him \cite{Xia},
succeeded in verifying the conclusion of Lemma~\ref{lem-main}.
This was very important to make us believe the conclusions of
Lemma~\ref{lem-main} and Theorem~\ref{thm-4pi1}, and to motivate
us to find the proof given above.
Professor Xia's method has been utilized in Maple
(version 13 and later).
\end{remark}

Compared to the previous situation where one obtains existence
result, if the parameters are subject to different restrictions,
one can prove non-existence result as below.
This is used in Section~3 to verify the regularity of
Example~\ref{exa-singular1}.

\begin{lemma}\label{lem-a=b}
In Lemma~\ref{lem-main}, if we assume $a=b\in\mathbb{R}$ and $m=1$,
but drop the requirement of $a+b=-1$, then equation~\eqref{eq-main}
has no solutions $z$ when $-a$ is a sufficiently large
positive real number (e.g. $-a>1$)
\end{lemma}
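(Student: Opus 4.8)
The plan is to exploit the reality of $a=b$ to force the left-hand side of \eqref{eq-main} to be a non-negative real number, which pins any solution $z$ to three rays through the origin and reduces the problem to three real quadratics. First I would substitute $m=1$ and $a=b\in\mathbb{R}$ into \eqref{eq-main}. Since $a$ is real, $\bar z-\bar a=\overline{z-a}$ and $b=a$, so the left-hand side becomes $\overline{(z-a)}(z-a)=|z-a|^2$, a non-negative real number. Moreover $z=a$ is never a solution: it would force the right-hand side $a^2/a=a$ to vanish, i.e. $a=0$, contrary to hypothesis. Hence for any solution the left-hand side is strictly positive.

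Next I would analyse the right-hand side in polar form. Writing $z=re^{\mathrm{i}\theta}$ with $r>0$, the right-hand side $z^2/\bar z$ equals $r\,e^{3\mathrm{i}\theta}$. For this to match the positive real left-hand side we need $e^{3\mathrm{i}\theta}=1$, so that $\theta\in\{0,\,2\pi/3,\,4\pi/3\}$ modulo $2\pi$. This reduces the whole problem to checking three rays emanating from the origin.

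On each ray the equation collapses to a real quadratic in $r$. Setting $c=-a>0$, on the ray $\theta=0$ one finds $(r+c)^2=r$, i.e. $r^2+(2c-1)r+c^2=0$ with discriminant $1-4c$; on the rays $\theta=2\pi/3$ and $\theta=4\pi/3$ the left-hand side $|z-a|^2$ evaluates to $c^2-cr+r^2$ and the equation becomes $r^2-(c+1)r+c^2=0$ with discriminant $-(3c+1)(c-1)$. I would then observe that for $c>1$ (that is, $-a>1$) both discriminants are strictly negative, so none of the three quadratics has a real root and \eqref{eq-main} has no solution.

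The computation is entirely elementary, so there is no serious obstacle; the only point requiring care is the bookkeeping of the threshold. The ray $\theta=0$ already fails to produce solutions once $c>1/4$, but the rays $\theta=2\pi/3,\,4\pi/3$ admit a double root precisely at $c=1$ — namely $z=e^{2\pi\mathrm{i}/3}$, which one checks directly satisfies \eqref{eq-main} — so the hypothesis $-a>1$ is sharp and cannot be relaxed to a smaller constant. I would flag this borderline case to justify the specific bound stated in the lemma.
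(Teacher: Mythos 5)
Your proof is correct and takes essentially the same route as the paper's: both use the positivity of the left-hand side $|z-a|^2$ to confine any solution to the three rays $\arg z\in\{0,\,2\pi/3,\,4\pi/3\}$ and then compare the two sides along those rays. The only difference is that you carry out the comparison explicitly as discriminants of real quadratics (and confirm sharpness of the bound via the solution $z=e^{2\pi\mathrm{i}/3}$ at $-a=1$), details the paper defers to the end of Section~7 of \cite{Ma}.
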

\begin{proof}
Under our assumptions, \eqref{eq-main} simplifies to
\begin{equation}\label{eq-a=b}
|z-a|^2=z^3/|z|^2.
\end{equation}
So $z=r\omega^j$ for some $j\in\{0,1,2\}$ and
$r>0, \omega=\mathrm{e}^{2\pi\mathrm{i}/3}$.
That means on the complex plane $\mathbb{C}$, the solution $z$, if it exists,
must be located on the union of three radial lines.
So we need only to compare $|z-a|^2$ and $|z|$,
the modules at either sides of \eqref{eq-a=b} for $z$ in this subset.
When $r>0$ is small enough or big enough, the module $|z-a|^2$ is obviously larger than $|z|$. Thus intuitively we know that for suitable $a$ there will always be $|z-a|^2>|z|$. It is easy to rigorously verify this assertion;
see the elementary and standard proof in \cite{Ma} (the end of Section~7).
This shows the non-existence of solution $z$.
\qed\end{proof}

\section{Appendix B}

The theorem below is the key lemma in
our proof to Theorem~\ref{thm-least} which shows
the non-existence of a complete, non-oriented, algebraic
stationary surface in $\mathbb{R}^4_1$
with total Gaussian curvature $2\pi(g+2)$
and without any singular points or singular ends.
\begin{theorem}\label{thm-odd}
Let $\widetilde{M}$ be a closed oriented surface of genus $g$,
$I:\widetilde{M}\to\widetilde{M}$ be an
orientation-reversing involution of $\widetilde{M}$ without fixed points.
$\tilde{\phi}:\widetilde{M}\to S^2\subset \mathbb{R}^3$
is an odd map, i.e., $\tilde\phi(I(p))=-\tilde\phi(p)$.
Then $\deg\tilde\phi\equiv g-1 (\mathrm{mod}~2)$
\end{theorem}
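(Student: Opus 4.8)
The plan is to descend $\tilde\phi$ to the non-orientable quotient and reduce the statement to a characteristic-class computation. Set $M=\widetilde M/\{\mathrm{id},I\}$ with projection $\pi$; since $I$ is a free orientation-reversing involution, $M$ is a closed non-orientable surface whose orientation double cover is $\pi:\widetilde M\to M$, so $\chi(M)=\tfrac12\chi(\widetilde M)=1-g$. The antipodal map $a$ on $S^2$ is the free orientation-reversing involution with quotient $\mathbb{RP}^2=S^2/a$, and the oddness condition $\tilde\phi\circ I=a\circ\tilde\phi$ says precisely that $\tilde\phi$ is equivariant; hence it descends to a continuous map $\phi:M\to\mathbb{RP}^2$ with $q\circ\tilde\phi=\phi\circ\pi$, where $q:S^2\to\mathbb{RP}^2$ is the covering.

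First I would show $\deg\tilde\phi\equiv\deg_2\phi\pmod 2$, where $\deg_2$ denotes the mod-$2$ degree. After an equivariant smooth approximation (which changes neither the homotopy class nor the degrees), pick a regular value $y\in S^2$; then $-y$ is also regular by equivariance and $[y]$ is a regular value of $\phi$. Because $\pi$ identifies $p$ with $I(p)$ while $\tilde\phi(I(p))=-\tilde\phi(p)\ne\tilde\phi(p)$, the projection $\pi$ is injective on $\tilde\phi^{-1}(y)$ and carries it bijectively onto $\phi^{-1}([y])=\pi\big(\tilde\phi^{-1}(\{y,-y\})\big)$. Thus $\#\phi^{-1}([y])=\#\tilde\phi^{-1}(y)$, and reducing the signed count mod $2$ gives $\deg\tilde\phi\equiv\#\tilde\phi^{-1}(y)=\#\phi^{-1}([y])\equiv\deg_2\phi\pmod 2$.

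Next I would compute $\deg_2\phi$ cohomologically with $\mathbb{Z}_2$ coefficients. The essential point is that the connected double cover $\widetilde M\to M$ is the orientation cover, while the existence of the equivariant lift $\tilde\phi$ also exhibits it as the pullback $\phi^{\ast}(S^2\to\mathbb{RP}^2)$; comparing the classifying classes in $H^1(M;\mathbb{Z}_2)$ then yields $\phi^{\ast}w_1(\mathbb{RP}^2)=w_1(M)$. Writing $\alpha=w_1(\mathbb{RP}^2)$ for the generator of $H^{\ast}(\mathbb{RP}^2;\mathbb{Z}_2)=\mathbb{Z}_2[\alpha]/(\alpha^3)$, the class $\alpha^2$ generates $H^2(\mathbb{RP}^2;\mathbb{Z}_2)$, so $\deg_2\phi=\langle\phi^{\ast}(\alpha^2),[M]_2\rangle=\langle w_1(M)^2,[M]_2\rangle$, where $[M]_2$ is the $\mathbb{Z}_2$-fundamental class. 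By Wu's formula on a surface (where the second Wu class vanishes) one has $w_1^2=w_2$, and $\langle w_2(M),[M]_2\rangle\equiv\chi(M)\pmod 2$. Therefore $\deg_2\phi\equiv\chi(M)=1-g\equiv g-1\pmod 2$, which combined with the previous step gives $\deg\tilde\phi\equiv g-1\pmod 2$.

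The step I expect to be the main obstacle is the identity $\phi^{\ast}w_1(\mathbb{RP}^2)=w_1(M)$: it rests on verifying carefully that the equivariant lift $\tilde\phi$ forces the orientation cover of $M$ to coincide with the pullback of the universal example $S^2\to\mathbb{RP}^2$, i.e. that $\widetilde M$ is connected and that its deck transformation $I$ is matched with the antipodal map $a$ under $\tilde\phi$. Once this is in place, the Wu-formula identification $w_1^2=w_2$ on a surface and the Euler-characteristic evaluation $\langle w_2,[M]_2\rangle\equiv\chi(M)$ are standard, and the regular-value bookkeeping of the first step is routine.
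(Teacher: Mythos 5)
Your proposal is correct, and in its second half it takes a genuinely different route from the paper. Both proofs begin identically, descending the equivariant map $\tilde\phi$ to $\phi:M\to\mathbb{RP}^2$ on the quotient $M=\widetilde M/\{\mathrm{id},I\}$ and reducing to the mod-$2$ degree of $\phi$; in fact you are more careful here than the paper, which asserts the passage from $\deg_2\phi$ back to $\deg\tilde\phi$ in a single sentence, whereas your regular-value fiber count ($\pi$ injective on $\tilde\phi^{-1}(y)$ and surjective onto $\phi^{-1}([y])$, so $\#\tilde\phi^{-1}(y)=\#\phi^{-1}([y])$) supplies the missing detail. Where you diverge is in computing $\deg_2\phi$. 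The paper decomposes $M$ explicitly as a connected sum of $g+1$ projective planes, lifts the core loops $\gamma_j$ to arcs joining antipodal points to show $\phi^*(\alpha)=\alpha_1+\cdots+\alpha_{g+1}$, and then evaluates $\phi^*(\alpha)^2$ by a hands-on $\mathbb{Z}_2$ self-intersection count of $[\gamma_1]+\cdots+[\gamma_{g+1}]$, getting $g+1\pmod 2$. You instead identify $\phi^*(\alpha)$ with $w_1(M)$ by recognizing $\widetilde M\to M$ as simultaneously the orientation cover and the pullback $\phi^*(S^2\to\mathbb{RP}^2)$ (your pullback map $\tilde p\mapsto(\pi(\tilde p),\tilde\phi(\tilde p))$ is indeed a fiberwise bijection precisely because $\tilde\phi(I(p))=-\tilde\phi(p)$), then close with Wu's formula $w_1^2=w_2$ on a surface and $\langle w_2(M),[M]_2\rangle\equiv\chi(M)=1-g\pmod 2$; note $1-g\equiv g+1\pmod 2$, so the two answers agree. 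The paper's computation is elementary and self-contained at the level of surface topology (no characteristic classes beyond Poincaré duality over $\mathbb{Z}_2$), while yours is shorter given standard machinery, avoids choosing a connected-sum decomposition, and isolates the conceptual content — the equivariance forces $\phi^*w_1(\mathbb{RP}^2)=w_1(M)$ — in a form that is more amenable to generalization; the step you flagged as the main obstacle is exactly right and your covering-classification argument for it is sound, granting the (stated) connectedness of $\widetilde M$, which guarantees $M$ is non-orientable and $\widetilde M\to M$ is its orientation double cover.
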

The statement reminds us of the famous theorem that any odd
map $f:S^n\to S^n$ has odd degree, which implies the
Borsuk-Ulam Theorem.
We believe that this generalization
is not a new result. Yet to the best of
our knowledge we could not find a reference. The proof below is
provided by Professor Fan Ding from Peking University.
\begin{proof}
Let $M=\widetilde{M}/\{p\sim I(p)\}$ be the quotient surface which is
non-orientable.
$\tilde{\phi}:\widetilde{M}\to S^2$ induces a quotient map $\phi$
from $M$ to the projective plane $\mathbb{R}P^2=S^2/\{x\sim -x\}.$

Decompose $M$ as the connected sum of $g+1$
projective planes $M=M_1\sharp \cdots\sharp M_{g+1}$.
For any $M_j(j=1,\cdots,g+1)$, we choose a closed path $\gamma_j$ in $M_j$
representing the generator of the first homology group
$H_1(M_j,\mathbb{Z}_2)$, which lifts to a path
$\tilde\gamma_j\subset\widetilde{M}$
whose end points are a pair of antipodal points.
As an odd map, $\tilde{\phi}$ maps $\tilde\gamma_j$ to another
path connecting antipodal points, which projects to
a closed path representing the generator of $H_1(\mathbb{R}P^2,\mathbb{Z}_2)$.
Thus the induced map
$$\phi^*:H^1(\mathbb{R}P^2;\mathbb{Z}_2)\to H^1(M;\mathbb{Z}_2)$$ on
the cohomology groups is given by $$\phi^*(\alpha)=\alpha_1+\cdots
+\alpha_{g+1},$$ where $0\neq \alpha\in
H^1(\mathbb{R}P^2;\mathbb{Z}_2)$, and $\alpha_j\in
H^1(M;\mathbb{Z}_2)$($j=1,\ldots,g+1$) satisfies
$\alpha_j([\gamma_i])=0$ for $i\neq j$ and
$\alpha_j([\gamma_j])=1$. Since the intersection between the
homology classes $[\gamma_i]\in H_1(M;\mathbb{Z}_2)$ and
$[\gamma_j]\in H_1(M;\mathbb{Z}_2)$ is $1$ when $i=j$ and $0$ when
$i\neq j$, the Poincar\'{e} dual of $\alpha_j$ is $[\gamma_j]$.
Thus the Poincar\'{e} dual of $\phi^*(\alpha)$ is
$[\gamma_1]+\cdots +[\gamma_{g+1}]$. Since the self-intersection
of the homology class $[\gamma_1]+\cdots +[\gamma_{g+1}]\in
H_1(M;\mathbb{Z}_2)$ is $g+1({\rm mod} 2)$,
$$\phi^*(\alpha\cup\alpha)=\phi^*(\alpha)\cup \phi^*(\alpha)=
(g+1)\beta,$$ where $0\neq\beta\in H^2(M;\mathbb{Z}_2)$. Hence the
mod $2$ degree of $\phi$ is $g+1({\rm mod} 2)$. Thus the mod 2
degree of $\tilde{\phi}$ is $g+1({\rm mod}2)$. This finishes the proof.
\qed\end{proof}
\begin{remark}
If we only consider a continuous map $\phi$ from the
non-oriented quotient surface $M$ to $\mathbb{R}P^2$,
then the conclusion is not necessarily true. The simplest
counter-example is a constant map. On the other hand, if
we assume that $\phi$ is a branched covering map, then the conclusion
is one part of Meeks' Theorem $1$ in \cite{Meeks}.
We don't know whether our conclusion could be generalized to the case of odd mapping $\tilde{\phi}:\widetilde{M}_1\to \widetilde{M}_2$ where each closed oriented surface is endowed with an
orientation-reversing involution without fixed points.
\end{remark}

\vspace{5mm} \noindent Xiang Ma, {\small\it LMAM, School of
Mathematical Sciences, Peking University, 100871 Beijing, People's
Republic of China. e-mail: {\sf maxiang@math.pku.edu.cn}}

\vspace{5mm} \noindent Peng Wang, {\small Department of Mathematics,
Tongji University, 200092.  Shanghai, People's Republic of China.
e-mail: {\sf netwangpeng@tongji.edu.cn}
\end{document}